\newcommand{\bbC}{\mathbb{C}}
\newcommand{\bbN}{\mathbb{N}}
\newcommand{\bbR}{\mathbb{R}}
\newcommand{\bbT}{\mathbb{T}}
\newcommand{\bbZ}{\mathbb{Z}}
\newcommand{\calB}{\mathcal{B}}
\newcommand{\calL}{\mathcal{L}}
\DeclareMathOperator{\id}{id} 
\DeclareMathOperator{\one}{\mathbbm{1}} 
\DeclareMathOperator{\re}{Re} 
\DeclareMathOperator{\im}{Im} 
\newcommand{\argument}{\mathord{\,\cdot\,}} 
\newcommand{\dx}{\;\mathrm{d}} 
\DeclareMathOperator{\fix}{fix} 
\newcommand{\norm}[1]{\left\lVert #1 \right\rVert} 
\newcommand{\modulus}[1]{\left\lvert #1 \right\rvert} 
\newcommand{\restricted}[1]{|_{#1}}
\DeclareMathOperator{\supp}{supp} 
\DeclareMathOperator{\opensupp}{op\,supp} 
\newcommand{\Cont}{\mathcal{C}} 
\newcommand{\spec}{\sigma} 
\newcommand{\per}{{\operatorname{per}}}
\newcommand{\pnt}{{\operatorname{pnt}}}
\newcommand{\specPnt}{\spec_{\pnt}} 
\newcommand{\spr}{r} 
\newcommand{\impliesProof}[2]{``\ref{#1} $\Rightarrow$ \ref{#2}''}
\newcommand{\equivalentProof}[2]{``\ref{#1} $\Leftrightarrow$ \ref{#2}''}
\newcommand{\rightProof}{``$\Rightarrow$''}
\newcommand{\leftProof}{``$\Leftarrow$''}
\theoremstyle{definition}
\newtheorem{definition}{Definition}[section]
\newtheorem{remark}[definition]{Remark}
\newtheorem{example}[definition]{Example}
\theoremstyle{plain}
\newtheorem{lemma}[definition]{Lemma}
\newtheorem{theorem}[definition]{Theorem}
\newtheorem{corollary}[definition]{Corollary}
\numberwithin{equation}{section}
\begin{document}

\title[Aperiodicity of positive operators]{Aperiodicity of positive operators that increase the support of functions}
\author{Jochen Gl\"uck}
\address{Bergische Universit\"at Wuppertal, Fakult\"at f\"ur Mathematik und Naturwissenschaften, Gaußstr.\ 20, 42119 Wuppertal, Germany}
\email{glueck@uni-wuppertal.de}
\subjclass[2020]{47B65; 47A10; 47G10; 46B42; 60J05}
\keywords{Aperiodic operators; primitivity; peripheral point spectrum; expanding support; non-zero diagonal}
\date{\today}
\begin{abstract}
	Consider a positive operator $T$ on an $L^p$-space 
	(or, more generally, a Banach lattice)
	which increases the support of functions in the sense that 
	$\supp(Tf) \supseteq \supp{f}$ for every function $f \ge 0$.
	We show that this implies, under mild assumptions, 
	that $T$ has no unimodular eigenvalues except for possibly the number $1$.
	This rules out periodic behaviour of any orbits of the powers of $T$,
	and thus enables us to prove convergence of those powers in many situations.
	
	For the proof we first perform a careful analysis 
	of the action of lattice homomorphisms on the support of functions;
	then we split $T$ into an invertible and a weakly stable part, 
	and apply the aforementioned analysis to the invertible part.
	An appropriate adaptation of this argument allows us to prove another version of our main result 
	which is useful for the study of so-called irreducible operators.
\end{abstract}

\maketitle

\section{Introduction}

\subsection*{Motivation}

If a matrix $T \in \bbR^{d \times d}$ with spectral radius $\spr(T) = 1$ has only entries $\ge 0$ and all diagonal entries of $T$ are non-zero, then it follows from classical Perron--Frobenius theory that $1$ is the only eigenvalue of $T$ in the complex unit circle $\bbT$.
The latter spectral property if often referred to as \emph{aperiodicity}. 

For linear operators $T$ that leaves the positive cone of an infinite dimensional function space (or Banach lattice) $E$ invariant, the most obvious generalization of the assumption that all diagonal entries be non-zero is the assumption that $T \ge \varepsilon \id_E$ for some $\varepsilon > 0$. 
While this does indeed imply a similar result for the spectrum of $T$ 
(see Section~\ref{section:domination-of-id} for details), this observation is only of minor usability since the assumption $T \ge \varepsilon \id_E$ is extremely strong in infinite dimensions.

\subsection*{Operators that increase the support of functions}

It is therefore worthwhile to look for a more prevalent properties which generalize the assumption that all diagonal entries of a matrix be non-zero.  
One such property on, say, a function space $L^p$ over a $\sigma$-finite measure space is as follows: 
one assumes that for every $0 \le f \in L^p$ the support $\supp(Tf)$ contains the support $\supp f$ up to a set of measure $0$. 
Here, the support of a function $f \in L^p(\Omega,\mu)$ means the set $\{\omega \in \Omega: \; f(\omega) \not= 0\}$;
this set is defined only up to a null set (and thus, strictly speaking, it is not a set but an element of the \emph{measure algebra} of $(\Omega,\mu)$).
Under mild assumptions this property implies that the \emph{point spectrum} $\specPnt(T)$ (rather than the spectrum) intersects the unit circle $\bbT$ at most in the point $1$:

\begin{theorem}
	\label{thm:main-everywhere}
	Let $(\Omega,\mu)$ be a $\sigma$-finite measure space and $p \in [1,\infty)$. 
	Let $T: L^p \to L^p$ be a positive linear operator on the complex-valued space $L^p := L^p(\Omega,\mu)$. 
	Assume that $T$ is power bounded and that, for each $0 \le f \in L^p$, 
	there exists an integer $n \ge 1$ such that $\supp(T^nf) \supseteq \supp(T^{n-1}f)$. 
	Then $\specPnt(T) \cap \bbT \subseteq \{1\}$.
\end{theorem}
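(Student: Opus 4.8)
The proof runs by contradiction along the lines sketched in the abstract. Suppose $\lambda \in \bbT \setminus \{1\}$ is an eigenvalue of $T$, say $Tg = \lambda g$ with $g \neq 0$. The plan is: (i) build a fixed vector of $T$ dominating $\modulus g$ and pass to the band it generates; (ii) there, split $T$ into an invertible (``reversible'') part and a weakly stable part; (iii) show, using the support-increasing hypothesis, that the invertible part is multiplication by a weight, and then, using the fixed vector, that this weight equals $1$; (iv) conclude, since the invertible part still has $\lambda \neq 1$ in its point spectrum.

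For (i): positivity gives $\modulus g = \modulus{Tg} \le T\modulus g$, so $(T^n\modulus g)_{n}$ increases; power boundedness makes it norm bounded, so --- $L^p$ being a KB-space --- it converges in norm to some $h$ with $Th = h$ and $h \ge \modulus g \neq 0$. Since every positive operator on the order continuous lattice $L^p$ is order continuous, the band $B$ generated by $h$ is $T$-invariant; $B$ is isometrically an $L^p$-space (over $\supp h$) in which $h$ is a quasi-interior point, and $g \in B$ because $\modulus g \le h$. Replacing $L^p$ by $B$, we may assume $T$ has a quasi-interior fixed vector $h > 0$. For (ii): whenever $\modulus f \le c h$ one has $\modulus{T^n f} \le T^n\modulus f \le c h$, so the orbit of $f$ stays in the order interval $[-ch,ch]$, which is weakly compact because $L^p$ has order continuous norm; as such $f$ are dense and $T$ is power bounded, every orbit of $T$ is relatively weakly compact. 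The Jacobs--de Leeuw--Glicksberg decomposition therefore gives a $T$-invariant splitting $B = B_{\mathrm r} \oplus B_{\mathrm s}$, where $B_{\mathrm s}$ consists of the vectors $x$ with $0$ in the weak closure of $\{T^n x : n \ge 0\}$ and $B_{\mathrm r}$ is the closed linear span of all eigenvectors of $T$ for unimodular eigenvalues; on $B_{\mathrm r}$ the operator $T$ is invertible and the operator family $\{(T|_{B_{\mathrm r}})^n : n \in \bbZ\}$ has relatively weakly compact orbits, so its closure in the weak operator topology is a compact group. Note $g, h \in B_{\mathrm r}$.

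For (iii): the inverse of $T|_{B_{\mathrm r}}$ lies in that compact group, hence is a weak-operator limit of powers of $T|_{B_{\mathrm r}}$ and so is positive (the positive cone being closed); a bijective positive operator with positive inverse is a lattice isomorphism, so $T|_{B_{\mathrm r}}$ is one. Moreover $B_{\mathrm r}$, being a closed sublattice of $B$ on which disjoint vectors still add up in the $p$-norm, is itself an $L^p$-space, and it contains the quasi-interior point $h$, so its underlying measure space is genuinely ``all of it''. By the Banach--Lamperti representation, the lattice isomorphism $T|_{B_{\mathrm r}}$ is a weighted composition operator $f \mapsto w\cdot(f\circ\varphi)$ with $\varphi$ an automorphism of the measure algebra of $B_{\mathrm r}$ and $w \neq 0$ a.e.; hence $\supp(T^k f) = \varphi^{-k}(\supp f)$ for $f \in B_{\mathrm r}$ and $k \ge 0$. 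Given an arbitrary set $A$ in the measure algebra of $B_{\mathrm r}$, apply the hypothesis to $\one_A \in B_{\mathrm r}$: there is $n \ge 1$ with $\varphi^{-n}(A) \supseteq \varphi^{-(n-1)}(A)$, and applying the automorphism $\varphi^{\,n-1}$ yields $\varphi^{-1}(A) \supseteq A$. Since $A$ was arbitrary, complementation gives $\varphi^{-1}(A) = A$ for every $A$, i.e.\ $\varphi$ is trivial, so $T|_{B_{\mathrm r}}$ is just multiplication by $w$. Finally, from $wh = Th = h$ with $h > 0$ a.e.\ we get $w = 1$, i.e.\ $T|_{B_{\mathrm r}} = \id$. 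This settles (iv): $g \in B_{\mathrm r} \setminus \{0\}$ and $Tg = \lambda g$ force $\lambda = 1$, contradicting $\lambda \in \bbT \setminus \{1\}$; hence $\specPnt(T) \cap \bbT \subseteq \{1\}$.

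The soft parts are (i) and (ii); the crux is (iii). Two points there need the genuine work carried out separately in the paper: identifying the reversible part $B_{\mathrm r}$ as an $L^p$-space on which $T$ acts as an honest weighted composition operator, \emph{compatibly with the ambient support structure}; and --- the main obstacle --- converting the \emph{eventual}, $f$-dependent support increase ``$\supp(T^n f) \supseteq \supp(T^{n-1} f)$ for some $n$'' into the rigidity statement ``$\varphi = \id$''. This is precisely the careful analysis of how lattice homomorphisms act on supports; one also has to keep track of how the $L^p$-structure of $B_{\mathrm r}$ sits inside that of the ambient space, and, for $p = 1$, rely on weak compactness of order intervals in place of reflexivity.
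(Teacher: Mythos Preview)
Your steps (i) and (ii) match the paper's reduction (Corollary~\ref{cor:main-everywhere-bl-kb} and the Jacobs--de Leeuw--Glicksberg part of Theorem~\ref{thm:main-everywhere-bl}), modulo a minor slip: order continuity of a positive operator does \emph{not} imply that it preserves all bands (the backward shift on $\ell^2$ is a counterexample); the correct reason $B$ is $T$-invariant is simply $Th=h$, which makes $E_h$ and hence $\overline{E_h}$ invariant.

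The real issue is step (iii). You assert that the reversible part $B_{\mathrm r}=PE$ is a closed \emph{sublattice} of $B$ and hence an $L^p$-space, so that Banach--Lamperti applies. This is not justified. The range of a positive projection on a Banach lattice is a Banach lattice in its own right (with lattice operations $x\vee_{PE}y=P(x\vee_E y)$, cf.\ \cite[Proposition~III.11.5]{Schaefer1974}), but it is \emph{not} in general a sublattice of the ambient space; one needs $P$ to be strictly positive for that, and the paper establishes this only in the irreducible setting (see the proof of Theorem~\ref{thm:irred-bl}), not here. Without $B_{\mathrm r}$ being a sublattice, disjointness in $B_{\mathrm r}$ need not coincide with disjointness in $B$, the $p$-additivity of the norm on disjoint pairs is unclear, and the Banach--Lamperti representation is unavailable. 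Your closing remark that this identification is ``carried out separately in the paper'' is a misreading: the paper never shows that $B_{\mathrm r}$ is an $L^p$-space. Instead, its key step (Theorem~\ref{thm:lattice-homomorphism}) works entirely inside the abstract Banach lattice $PE$: using that the point spectrum of a lattice homomorphism is cyclic, it picks an eigenvalue $e^{i\theta}$ with $\cos\theta\le 0$, decomposes the eigenvector as $z=x+iy$, and tracks the closed principal ideals $\overline{E_{v^\pm}},\overline{E_{w^\pm}}$ (for $v=T^{n-1}x$, $w=T^{n-1}y$) through the inequalities $T(v^+)\le v^-+w^-$ etc., obtaining a chain of inclusions that forces all four parts to vanish. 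Your complementation trick ($\varphi^{-1}(A)\supseteq A$ for all $A$ forces $\varphi=\id$) is an elegant alternative in spirit, but as written it rests on a structural claim about $B_{\mathrm r}$ that you have not proved and that the paper deliberately circumvents.
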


As indicated above, the condition $\supp(T^nf) \supseteq \supp(T^{n-1}f)$ is meant as an inclusion up to a null set
(i.e., as in inclusion within the measure algebra).
\emph{Power boundedness} of $T$ means that $\sup_{n \in \bbN_0} \norm{T^n} < \infty$, 
\emph{positivity} of $T$ means that $Tf \ge 0$ whenever $f \ge 0$, and the inequalities $\le$ and $\ge$ for functions are to be understood almost everywhere.
The simplest case of the inclusion $\supp(T^nf) \supseteq \supp(T^{n-1}f)$ is of course the case $\supp(Tf) \supseteq \sup(f)$ for each $0 \le f \in L^p$; 
this latter property is, as explained before, an infinite-dimensional generalization of the property of a matrix $T \ge 0$ that all diagonal entries be non-zero, and it is much weaker than the assumption that $T \ge \varepsilon \id$ for some $\varepsilon > 0$.

Similarly as for matrices, one may think of the conclusion $\specPnt(T) \cap \bbT \subseteq \{1\}$ 
in Theorem~\ref{thm:main-everywhere} as an \emph{aperiodicity} property,
since it is equivalent to the assertion that the powers of $T$ do not have a periodic orbit with minimal period $\ge 2$.
This explains the title of the paper, and we will employ this property to prove 
various convergence theorems for the powers of $T^n$ as $n \to \infty$. 
We remark that, at least when the spectral radius of $T$ satisfies $\spr(T) = 1$, 
the property $\specPnt(T) \cap \bbT \subseteq \{1\}$ is, in some parts of the literature, 
also called \emph{primitivity}; 
see for instance \cite[Section~6]{Grobler1995}.

We will actually prove a more general version of Theorem~\ref{thm:main-everywhere} which also holds on Banach lattices 
(see Subsection~\ref{subsection:primitivity-1:point-spectrum}; there we will also show how to derive Theorem~\ref{thm:main-everywhere} from its Banach lattice version). 
The Banach lattice setting does not only make the result available for other functions spaces, too; 
it is also a more natural setting for the proof which will make heavy usage of lattice theory and will, in particular, include a reduction to a lattice subspace which will not be an $L^p$-space, in general, even if we start on an $L^p$-space.

\subsection*{Irreducible operators and partially increasing support}

In some cases it will happen that the property $\supp(Tf) \supseteq \sup(f)$ is not satisfied for all $0 \le f \in L^p$, but only for those $f$ which are supported within a given set $S$ that is smaller than the underlying measure space. 
Still, we are able to essentially make the same conclusion if we add the assumption that the operator $T$ be \emph{irreducible}, which means that, for each non-zero $0 \le f \in L^p$ and each non-zero linear functional $g \ge 0$ on $L^p$, there exists an integer $n \ge 0$ such that $\langle g, T^n f \rangle > 0$.

\begin{theorem}
	\label{thm:main-irred}
	Let $(\Omega,\mu)$ be a $\sigma$-finite measure space and $p \in [1,\infty)$. 
	Let $T: L^p \to L^p$ be a positive linear operator on the complex-valued space $L^p = L^p(\Omega,\mu)$. 
	Assume that $T$ is power bounded and irreducible and that there exists a measurable set $S \subseteq \Omega$ 
	of non-zero measure such that $\supp(Tf) \supseteq \supp(f)$ for each $0 \le f \in L^p$ 
	that satisfies $\supp (f) \subseteq S$. 
	Then $\specPnt(T) \cap \bbT \subseteq \{1\}$.
\end{theorem}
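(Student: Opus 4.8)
The strategy is to reduce Theorem~\ref{thm:main-irred} to (the Banach lattice version of) Theorem~\ref{thm:main-everywhere} by passing to a suitable principal band or invariant sublattice on which the "everywhere increasing support" hypothesis is recovered. First I would use irreducibility and the hypothesis on $S$ to locate the right subspace. Since $\supp(Tf) \supseteq \supp(f)$ for every $0 \le f$ supported in $S$, the powers $T^n f$ have supports that grow as long as they stay inside $S$; irreducibility then forces these supports to exhaust $\Omega$ (up to a null set), because the band generated by $\{T^n f : n \ge 0\}$ is $T$-invariant and an irreducible operator admits no non-trivial invariant bands. Concretely, fix $0 \le f \in L^p$ with $0 \ne \supp(f) \subseteq S$; the closed band generated by the orbit of $f$ must be all of $L^p$.

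Next I would analyse a unimodular eigenvalue. Suppose $Tu = \lambda u$ with $\lambda \in \bbT$ and $u \ne 0$. Then $|u| \le T|u|$ (the standard consequence of positivity for eigenfunctions to unimodular eigenvalues), so $h := |u|$ satisfies $h \le Th \le T^2 h \le \cdots$, and $\supp(h) \subseteq \supp(Th) \subseteq \supp(T^2 h) \subseteq \cdots$; by irreducibility the increasing band generated by these supports is again all of $\Omega$ up to a null set. The point is now to show that this monotone behaviour, together with the local support-increase on $S$ and power boundedness, already forces $h$ (hence $u$) to be supported, in the appropriate sense, on a set where the full hypothesis of Theorem~\ref{thm:main-everywhere} applies — or, more precisely, to run the same splitting argument that underlies Theorem~\ref{thm:main-everywhere}: decompose $T$ (restricted to the relevant invariant sublattice) into an invertible part and a weakly stable part, observe that on the invertible part the eigenfunction $u$ is supported, and that there the support-increase condition propagates from $S$ to the whole support of the invertible part by invertibility. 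Then the lattice-homomorphism analysis from the proof of Theorem~\ref{thm:main-everywhere} applies verbatim and yields $\lambda = 1$.

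The main obstacle, I expect, is the interaction between irreducibility and the restriction to $S$: irreducibility is a global property of $T$ on $L^p$, but the support-increase hypothesis is purely local, and after splitting off the weakly stable part one must check that irreducibility is inherited (or at least that enough of it survives) on the invertible summand, and that the "reachable" set from $S$ under the invertible part is co-null there. This is where the careful bookkeeping with supports in the measure algebra — and the fact that the invertible part is itself a lattice homomorphism composed with a positive invertible operator — has to be combined with an argument showing that $S$, being of non-zero measure, generates the whole space under forward orbits. Once that is in place, the peripheral point spectrum analysis is exactly the one already carried out for Theorem~\ref{thm:main-everywhere}, applied on the invariant sublattice, and the conclusion $\specPnt(T) \cap \bbT \subseteq \{1\}$ follows. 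A minor additional point is to handle eigenfunctions $u$ that vanish on large parts of $\Omega$: here one replaces $\Omega$ by $\supp(u)$ (which is $T$-absorbing enough by the inequality $|u| \le T|u|$) and checks that the hypotheses transfer.
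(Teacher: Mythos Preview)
Your overall architecture matches the paper's: apply the Jacobs--de Leeuw--Glicksberg splitting, use that the restriction of $T$ to the reversible part $PE$ is a lattice isomorphism, and then run the eigenvector analysis from the proof of Theorem~\ref{thm:lattice-homomorphism}. The point where your plan diverges, and where it has a genuine gap, is the step ``the support-increase condition propagates from $S$ to the whole support of the invertible part by invertibility''. This propagation is not justified and I do not see how to make it work. On $PE$ the operator $T\restricted{PE}$ is a lattice isomorphism, so $\overline{(PE)_{Tf}} = T\big(\overline{(PE)_f}\big)$; the inclusion $\overline{(PE)_{Tf}} \supseteq \overline{(PE)_f}$ therefore says that the closed ideal generated by $f$ is forward-invariant. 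There is no mechanism by which this forward-invariance for \emph{some} $f$ (those coming from $B$) forces it for \emph{all} $f$, even under irreducibility --- a cyclic permutation on $\bbC^n$ is an irreducible lattice isomorphism for which no non-trivial principal ideal is forward-invariant. So reducing to Theorem~\ref{thm:main-everywhere-bl} in this way does not go through.

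The paper avoids this reduction entirely. Instead of globalizing the hypothesis, it \emph{localizes the eigenvector}: let $Q$ be the band projection onto $B$ and apply $Q$ to the components $x^\pm,y^\pm$ of the eigenvector $z=x+iy$. The chain
\[
\overline{E_{Qx^+}} \subseteq \overline{E_{TQx^+}} \subseteq \overline{E_{Tx^+}} \subseteq \overline{E_{x^-+y^-}}
\]
(the first inclusion is the local hypothesis on $B$, the second uses $Qx^+ \le x^+$, the third is the lattice-homomorphism estimate) together with disjointness and $\overline{E_{Qx^+}}\subseteq B$ gives $\overline{E_{Qx^+}} \subseteq \overline{E_{Qy^-}}$, and cycling yields $Qx^\pm=Qy^\pm=0$, hence $Q\modulus{z}=0$. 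Now irreducibility is used only once, and in exactly the way you anticipated: $\modulus{z}$ is a fixed point, hence a quasi-interior point of $E_+$, so $Q\modulus{z}=0$ forces $Q=0$, contradicting $B\ne\{0\}$. Your ``minor additional point'' about restricting to $\supp(u)$ is unnecessary for the same reason --- irreducibility already makes $\modulus{z}$ quasi-interior. One further technical point you would need, which the paper handles explicitly, is that $PE$ is an honest \emph{sublattice} of $E$ (so that $x^\pm,y^\pm$ computed in $PE$ agree with those in $E$); this requires showing $P$ is strictly positive, which again comes from irreducibility via the existence of a strictly positive $T'$-fixed functional.
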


In contrast to Theorem~\ref{thm:main-everywhere} we do not allow for powers of $T$ 
in the inclusion $\supp(Tf) \supseteq \supp(f)$ here. 
We do not know what happens if we only assume $\supp(T^nf) \supseteq \supp(T^{n-1}f)$ 
in the situation of Theorem~\ref{thm:main-irred}.
Again, we will show Theorem~\ref{thm:main-irred} in the more general setting of Banach lattices; see Subsection~\ref{subsection:conditions-for-primitivity-2:point-spectrum}, where we also prove Theorem~\ref{thm:main-irred} as a consequence of its Banach lattice version.

As is the case for Theorem~\ref{thm:main-everywhere}, Theorem~\ref{thm:main-irred} is also a generalization of a well-known result about matrices: if $0 \le T \in \bbR^{d \times d}$ has spectral radisu $\spr(T) = 1$ and is irreducible, and at least one diagonal entry of $T$ is non-zero, $1$ is the only eigenvalue of $T$ in the complex unit circle; see for instance \cite[Corollary~1 to Theorem~I.6.5 on pp.\,22--23]{Schaefer1974}. 
Another infinite-dimensional generalization of this finite-dimensional result can be found in \cite[Theorem~6.2]{Grobler1995} and is applicable to operators that belong to an operator ideal which admits a continuous trace.
However, such operators always have a compact power, which severaly restricts the applicability of this result; in contrast, Theorem~\ref{thm:main-irred} does not entail any such compactness condition.

\subsection*{Organization of the article}

In Sections~\ref{section:conditions-for-primitivity-1} and~\ref{section:conditions-for-primitivity-2} we prove 
various versions of Theorems~\ref{thm:main-everywhere} and~\ref{thm:main-irred}, respectively.
In Section~\ref{section:domination-of-id} we study how the assumption $T \ge \varepsilon \id$, and versions thereof, impact the spectrum of $T$. 
All three sections contain a number of applications of our spectral theoretic results to the analysis of the long-term behaviour of the powers of $T$.

\subsection*{Related literature}

Results that use different kind of support expansion properties for positive operators or operator semigroups
in order to obtain spectral or asymptotic results 
occur in the literature on various occassions.

Apart from finite dimensional theorems such as \cite[Corollary~1 to Theorem~I.6.5 on pp.\,22--23]{Schaefer1974} 
we mention results that rely on different kinds of \emph{positivity improving properties}
(e.g.\ \cite[Theorem~4.3]{Gerlach2013}, \cite{GlueckWeber2020} \cite[Theorem~6.1]{Grobler1995}, 
\cite[Theorem~2.2]{KellerLenzVogtWojciechowski2015}, \cite{MakarowWeber2000}, \cite[Corollary~2 on p.\,249]{Rudnicki1995}), 
support overlapping operators (see e.g.\ \cite{BartoszekBrown1998}, \cite{Lin2000} \cite[Corollary~1 on p.\,248]{Rudnicki1995}), 
operators that satisfy lower bound properties (such as e.g.\ in 
\cite{Ding2003, Lasota1982, Emelyanov2004a, Zalewska-Mitura1994} and \cite[Section~4]{GlueckWolff2019}) 
and, more losely related, $C_0$-semigroups on sequences spaces 
\cite{Davies2005, Keicher2006, Wolff2008} 
(see in particular the arguments used in the proofs of 
\cite[Proposition~3.5]{Keicher2006} and \cite[Proposition~2.3]{Wolff2008}).

\subsection*{Principal ideals and quasi-interior points in Banach lattices} 

Throughout the paper we assume the reader to be familiar with the basic theory of real and complex Banach lattices; 
standard references for this theory are e.g.\ 
the monographs \cite{AliprantisBurkinshaw2006, Meyer-Nieberg1991, Schaefer1974, Zaanen1983}. 

Let $E$ be a real or complex Banach lattice; we denote its cone by $E_+$. 
For every vector $u \in E_+$ we denote the smallest ideal in $E$ that contains $u$ by $E_u$; 
it is called the \emph{principal ideal generated by $u$}, and one can easily check that it is given by the formula
\begin{align*}
	E_u = \{f \in E: \, \exists c \ge 0 \; \modulus{f} \le cu\}.
\end{align*}
The vector $u \in E_+$ is called a \emph{quasi-interior point} (of $E_+$) if the principal ideal $E_u$ is dense in $E$.
If $E$ is an $L^p$-space over a $\sigma$-finite measure space for some $p \in [1,\infty)$, then a function $0 \le u \in L^p$ is a quasi-interior point if and only if it is strictly positive almost everywhere.

For elements $v,w \ge 0$ in a Banach lattice $E$ we will often consider the property
\begin{align*}
	\overline{E_w} \supseteq \overline{E_v},
\end{align*}
where the bar denotes the closure. 
Note that the set $\overline{E_w}$ is the smallest closed ideal in $E$ that contains $w$.
In the $L^p$-setting (over a $\sigma$-finite measure space and for $p \in [1,\infty)$), it is not difficult to check 
that the inclusion $\overline{E_w} \supseteq \overline{E_v}$ 
is equivalent to the inclusion $\supp(w) \supseteq \supp(v)$, 
which is again understood to hold up to a set of measure $0$.
Hence, the inclusion between the closed ideals generated by $w$ and $v$ 
will serve as an abstract Banach lattice theoretic version of the assumption that the support of $w$ contains the support of $v$.

However, it is important to point out that things become more complicted
if we consider spaces of continuous functions rather than $L^p$-spaces; 
see the discussion before Example~\ref{ex:weakly-expanding}, 
as well as Theorem~\ref{thm:everywhere-c-k} and Lemma~\ref{lem:ideal-inclusion-c-k}.

\subsection*{Notational conventions}

We use the conventions $\bbN := \{1,2,3, \dots\}$ and $\bbN_0 := \bbN \cup \{0\}$, 
as well as $\bbT := \{\lambda \in \bbC: \, \modulus{\lambda} = 1\}$.

\section{Operators that increase the support of functions} 
\label{section:conditions-for-primitivity-1}

\subsection{The point spectrum}
\label{subsection:primitivity-1:point-spectrum}

The main result of this subsection, and also of the entire Section~\ref{section:conditions-for-primitivity-1} 
is the following theorem, which is an abstract version of Theorem~\ref{thm:main-everywhere}.

\begin{theorem} 
	\label{thm:main-everywhere-bl}
	Let $T: E \to E$ be a positive and weakly almost periodic linear operator on a complex Banach lattice $E$.
	Assume that for each $f \in E_+$ there exists an integer $n \ge 1$ 
	such that $\overline{E_{T^nf}} \supseteq \overline{E_{T^{n-1}f}}$. 
	Then $\specPnt(T) \cap \bbT \subseteq \{1\}$.
\end{theorem}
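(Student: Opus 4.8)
The plan is to argue by contradiction: suppose $\lambda \in \bbT$ with $\lambda \neq 1$ is an eigenvalue of $T$, say $Tf = \lambda f$ for some nonzero $f \in E$. Since $T$ is weakly almost periodic, its powers are power bounded, and a standard consequence of weak almost periodicity is that $E$ decomposes as a $T$-invariant direct sum $E = E_{\mathrm{inv}} \oplus E_{\mathrm{st}}$, where $T$ restricts to an invertible isometry (or at least an invertible operator with relatively weakly compact orbits) on the closed sublattice-like part $E_{\mathrm{inv}} = \overline{\linSpan}\{x : Tx = \mu x,\ \mu \in \bbT\}$, and the orbit of every vector in $E_{\mathrm{st}}$ converges weakly to $0$. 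Crucially, $E_{\mathrm{inv}}$ is a projection band (or at least the range of a positive projection commuting with $T$), because the mean ergodic projections associated with the unimodular eigenvalues are positive in this setting; this is where I would invoke the structure theory of the peripheral point spectrum of positive weakly almost periodic operators. The eigenvector $f$ lives in $E_{\mathrm{inv}}$.

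Next I would reduce to the invertible part. Write $|f|$ for the modulus of the eigenvector; by positivity of the projection onto $E_{\mathrm{inv}}$ one checks $|f| \in (E_{\mathrm{inv}})_+$, and since $T$ acts as a lattice homomorphism on the peripheral eigenspace (another standard fact: $|Tf| = T|f|$ for eigenvectors to unimodular eigenvalues of a positive power-bounded operator, because $\|T|f|\| = \||Tf|\| = \|f\| = \||f|\|$ forces equality in $|Tf| \le T|f|$ after iterating), we get $T|f| = |f|$. Now apply the support hypothesis to $g := |f| \in E_+$: there is $n \ge 1$ with $\overline{E_{T^n g}} \supseteq \overline{E_{T^{n-1}g}}$, i.e. $\overline{E_{|f|}} \supseteq \overline{E_{|f|}}$, which is vacuous — so I cannot apply it directly to $|f|$. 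Instead the key is to apply the hypothesis to a generic positive vector $h \in (E_{\mathrm{inv}})_+$, or better, to use that $T$ restricted to $E_{\mathrm{inv}}$ is a positive \emph{invertible} operator whose inverse is also positive (being an isometry with positive inverse on the peripheral part). A positive invertible operator with positive inverse is a lattice homomorphism. So $S := T|_{E_{\mathrm{inv}}}$ is a lattice isomorphism, and lattice isomorphisms preserve closed ideals exactly: $\overline{(E_{\mathrm{inv}})_{Sh}} = S\big(\overline{(E_{\mathrm{inv}})_h}\big)$ for every $h \ge 0$.

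The heart of the argument is then to show that a lattice automorphism $S$ of $E_{\mathrm{inv}}$ satisfying the iterated support-expansion property $\overline{(E_{\mathrm{inv}})_{S^n h}} \supseteq \overline{(E_{\mathrm{inv}})_{S^{n-1}h}}$ for suitable $h$ must in fact fix all closed ideals — which, via the careful analysis of how lattice homomorphisms act on supports promised in the abstract, forces $S$ to have no unimodular eigenvalue other than $1$, contradicting $Sf = \lambda f$ with $\lambda \neq 1$. Concretely: because $S$ is invertible with positive inverse, $\overline{E_{S^n h}} = S^n(\overline{E_h})$ and $\overline{E_{S^{n-1}h}} = S^{n-1}(\overline{E_h})$, so the hypothesis reads $S(\overline{E_h}) \supseteq \overline{E_h}$ after applying $S^{-(n-1)}$; applying this again and using invertibility gives $S(\overline{E_h}) = \overline{E_h}$, i.e. $\overline{E_h}$ is $S$-invariant for every $h$ in the relevant positive cone. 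An $S$-invariant closed ideal structure this rich, combined with the fact that $\lambda$-eigenvectors must have $S$-invariant support ideals that are rotated nontrivially when $\lambda \ne 1$, yields the contradiction. I expect the main obstacle to be establishing rigorously that the peripheral part $E_{\mathrm{inv}}$ is a genuine sublattice on which $T$ restricts to a lattice automorphism with positive inverse, and transferring the support-expansion hypothesis from all of $E_+$ down to $(E_{\mathrm{inv}})_+$ without losing the quantifier "$\exists n$" — since the $n$ depends on the vector, one must be careful that the relevant $h$'s still witness the property after the reduction. Handling the interplay between the band projection, the modulus, and the closed ideals $\overline{E_w}$ is the delicate technical core.
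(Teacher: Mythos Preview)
Your reduction via the Jacobs--de Leeuw--Glicksberg decomposition is correct and is exactly what the paper does: one obtains a positive projection $P$ commuting with $T$, the range $PE$ is a Banach lattice on which $S := T\restricted{PE}$ is a lattice \emph{isomorphism}, and the support-expansion hypothesis transfers to $(PE)_+$ via the positive operator $P$. So the framework is right.

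The gap is in the endgame for the lattice automorphism $S$. First, your claimed equality $S(\overline{E_h}) = \overline{E_h}$ is not justified. From the hypothesis you correctly extract $S(\overline{(PE)_h}) \supseteq \overline{(PE)_h}$ for every $h \in (PE)_+$; but applying this to $S^{-1}h$ yields $S(\overline{(PE)_{S^{-1}h}}) \supseteq \overline{(PE)_{S^{-1}h}}$, which is again $\overline{(PE)_h} \supseteq S^{-1}(\overline{(PE)_h})$ --- the \emph{same} inclusion, not the reverse. Invertibility alone does not close the gap. Second, even if every closed principal ideal were $S$-invariant, the sentence ``eigenvectors must have $S$-invariant support ideals that are rotated nontrivially when $\lambda \ne 1$'' is not an argument; a band-preserving lattice automorphism is indeed a multiplication-type operator with real spectrum, but this requires proof, and you have not established band preservation anyway.

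The paper handles this step (its Theorem on lattice homomorphisms) by a different, concrete computation. Using that the point spectrum of a lattice homomorphism is cyclic, one may assume there is an eigenvalue $e^{i\theta}$ with $\cos\theta \le 0$ and $\sin\theta \ge 0$. Writing an eigenvector as $z = x+iy$ and passing to a suitable iterate so that the support hypothesis applies to each of $x^\pm, y^\pm$, one uses the lattice homomorphism property to get $T(x^+) = (Tx)^+ \le x^- + y^-$, and similarly for $x^-, y^+, y^-$. Combining each such estimate with the support hypothesis and a disjointness lemma (if $\overline{E_f} \subseteq \overline{E_{g+h}}$ and $f \perp h$ then $\overline{E_f} \subseteq \overline{E_g}$) produces the cycle
\[
\overline{E_{x^+}} \subseteq \overline{E_{y^-}} \subseteq \overline{E_{x^-}} \subseteq \overline{E_{y^+}} \subseteq \overline{E_{x^+}},
\]
and since $x^+ \perp x^-$ this forces $x^+ = x^- = y^+ = y^- = 0$, a contradiction. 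This cyclic-inclusion argument is the ``careful analysis of the action of lattice homomorphisms on supports'' alluded to in the abstract, and it is the piece your proposal is missing.
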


The property \emph{weak almost periodicity} means that for each $f \in E$ 
the orbit $\{T^nf: \, n \ge 0\}$ is relatively compact with respect to the weak topology on $E$. 
By the uniform boundedness principle this property is stronger than being power bounded. 

Before we proceed to the proof of Theorem~\ref{thm:main-everywhere-bl} 
we show in the following corollary 
that the weak almost periodicity assumption can be replaced with the weaker property power boundedness
if the Banach lattice is a so-called \emph{KB-space}. 
A KB-space is a Banach lattice $E$ in which every increasing norm bounded sequence in the positive cone converges in norm. 
For instance, every $L^p$-space for $p \in [1,\infty)$ is a KB-space.

\begin{corollary} 
	\label{cor:main-everywhere-bl-kb}
	Let $T: E \to E$ be a positive and power-bounded linear operator on a KB-space $E$.
	Assume that for each $f \in E_+$ there exists an integer $n \ge 1$ 
	such that $\overline{E_{T^nf}} \supseteq \overline{E_{T^{n-1}f}}$. 
	Then $\specPnt(T) \cap \bbT \subseteq \{1\}$.
\end{corollary}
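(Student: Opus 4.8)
The plan is to reduce Corollary~\ref{cor:main-everywhere-bl-kb} to Theorem~\ref{thm:main-everywhere-bl} by localising to a closed $T$-invariant ideal on which $T$ is not merely power bounded but weakly almost periodic. Let $\lambda \in \specPnt(T) \cap \bbT$ and fix an eigenvector $h \neq 0$ with $Th = \lambda h$. First I would set $f := \modulus{h} \in E_+$ and note, using $\modulus{\lambda} = 1$ together with $\modulus{Th} \le T\modulus{h}$, that $f = \modulus{Th} \le T\modulus{h} = Tf$. Hence the sequence $(T^nf)_{n \in \bbN_0}$ is increasing, and it is norm bounded since $T$ is power bounded; as $E$ is a KB-space it converges in norm to some $g \in E_+$, and continuity of $T$ gives $Tg = g$. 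Moreover $g \ge f = \modulus{h} \neq 0$, so $g$ is a non-zero fixed point of $T$ which dominates $\modulus{h}$.

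Next I would pass to the closed ideal $F := \overline{E_g}$. It is $T$-invariant (if $\modulus{v} \le cg$ then $\modulus{Tv} \le T\modulus{v} \le cTg = cg$, so $T$ maps $E_g$ into itself and hence $\overline{E_g}$ into itself), it is again a KB-space (being a closed sublattice of one), and $g$ is a quasi-interior point of $F_+$. Since $\modulus{h} = f \le g$ we have $h \in F$, and $T|_F h = \lambda h$. Finally, the ideal-inclusion hypothesis descends to $F$: for $\varphi \in F_+$ one has $F_\varphi = E_\varphi$ with equal closures, so the required integer $n$ is the same one that works on $E$. Thus $T|_F$ is a positive, power-bounded operator on the KB-space $F$ which additionally has a quasi-interior fixed point and satisfies the hypothesis of Theorem~\ref{thm:main-everywhere-bl}.

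The step I expect to require the most care is showing that $T|_F$ is weakly almost periodic; this is where the quasi-interior fixed point is used. Since $g$ is quasi-interior, the principal ideal $F_g$ is dense in $F$, and for $v \in F_g$, say $\modulus{v} \le cg$, positivity gives $\modulus{(T|_F)^n v} \le (T|_F)^n \modulus{v} \le c\,(T|_F)^n g = cg$, so the entire orbit of $v$ lies in the order interval $[-cg, cg]$. In a KB-space the norm is order continuous, and therefore order intervals are weakly compact; hence the orbit of every $v \in F_g$ is relatively weakly compact. Since $T|_F$ is power bounded, the set of vectors in $F$ whose orbit is relatively weakly compact is a closed linear subspace (a standard $\varepsilon/3$-argument using power boundedness, a diagonal extraction, and the weak lower semicontinuity of the norm), and as it contains the dense subspace $F_g$ it equals $F$. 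Consequently $T|_F$ satisfies all hypotheses of Theorem~\ref{thm:main-everywhere-bl}, which yields $\specPnt(T|_F) \cap \bbT \subseteq \{1\}$; since $h$ witnesses $\lambda \in \specPnt(T|_F) \cap \bbT$, we conclude $\lambda = 1$, which proves the corollary.
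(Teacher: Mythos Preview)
Your proposal is correct and follows essentially the same route as the paper: take an eigenvector, use $\modulus{h} \le T\modulus{h}$ together with the KB-property to produce a positive fixed point, restrict $T$ to the closed ideal generated by that fixed point, observe that the ideal-inclusion hypothesis passes to this subspace, and use weak compactness of order intervals (via order continuity of the norm) plus power boundedness to obtain weak almost periodicity of the restriction, so that Theorem~\ref{thm:main-everywhere-bl} applies. The only differences are cosmetic (variable names) and expository: you spell out why the hypothesis descends to $F$ and sketch the closed-subspace argument for relative weak compactness of orbits, whereas the paper simply asserts the former and cites a reference for the latter.
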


\begin{proof}
	Let $\lambda \in \bbT$ be an eigenvalue of $T$ and let $f \in E$ be an associated eigenvector; 
	we need to show that $\lambda=1$.
	
	Observe that $\modulus{f} = \modulus{Tf} \le T \modulus{f}$. 
	By iterating this inequality we see that the sequence $(T^n \modulus{f})$ is increasing.
	Due to the power boundedness of $T$, the sequence is also norm bounded, 
	so it converges to a point $h \in E_+$ as $E$ is a KB-space. 
	Obviously, $h$ is a fixed point of $T$. 
	
	We now consider the subspace $F := \overline{E_h}$ of $E$,
	which is a Banach lattice in its own right.
	The space $F$ is invariant under $T$ and contains the eigenvector $f$. 
	The restriction $T\restricted{F}: F \to F$ still satisfies the assumptions of the corollary.
	Moreover, the restriction is a weakly almost periodic operator on $F$.
	Indeed, the \emph{order interval}
	\begin{align*}
		[0,h] := \{f \in E: \, 0 \le f \le g\}
	\end{align*}
	is weakly compact%
	\footnote{
		Indeed, as is for instance explained 
		after Definition~2.4.11 on p.\,92 of \cite{Meyer-Nieberg1991}, 
		every KB-space has \emph{order continuous norm} 
		(see our discussion before Corollary~\ref{cor:irred-bl-oc} for a definition of this notion), 
		and having order continuous norm is equivalent to all order intervals being weakly compact
		(see \cite[Theorem~2.4.2 on p.\,86]{Meyer-Nieberg1991} or \cite[Theorem~II.5.10 on p.\,89]{Schaefer1974}).
	}
	and invariant under $T$, so the orbit of every $f \in [0,h]$ under $T$ is relatively weakly compact. 
	Since the span of $[0,h]$ is dense in $F$ and $T$ is power bounded, 
	it thus follows that $T\restricted{F}$ is indeed weakly almost periodic 
	\cite[Corollary~A.5(c) on p.\,514]{Engel2000}.
	
	So $T\restricted{F}$ satisfies all assumptions of Theorem~\ref{thm:main-everywhere-bl}. 
	Hence, the theorem implies that $\lambda = 1$.
\end{proof}

We do not know whether the assertion of Theorem~\ref{thm:main-everywhere-bl} 
remains true on general Banach lattices if $T$ is only power bounded rather than weakly almost periodic.
Next, observe that the corollary immediately gives us the first theorem from the introduction 
as a special case:

\begin{proof}[Proof of Theorem~\ref{thm:main-everywhere}]
	Since $p \in [1,\infty)$, the Banach lattice $E := L^p$ is a KB-space.
	Moreover, as explained at the end of the introduction, 
	the assumption on the supports in Theorem~\ref{thm:main-everywhere}
	is the same as the inclusion $\overline{E_{T^nf}} \supseteq \overline{E_{T^{n-1}f}}$.
	So all assumptions of Corollary~\ref{cor:main-everywhere-bl-kb} are satisfied.
\end{proof}

Let us now turn to the proof of Theorem~\ref{thm:main-everywhere-bl}. 
We first show a few properties of closed ideals generated by single vectors in the following three lemmas.

\begin{lemma}
	\label{lem:ideal-inclusion}
	Let $E$ be a real or complex Banach lattice and let $f,g \in E_+$. 
	The following assertions are equivalent:
	\begin{enumerate}[label=\upshape(\alph*)]
		\item\label{lem:ideal-inclusion:item:inclusion} 
		$\overline{E_f} \subseteq \overline{E_g}$.
		
		\item\label{lem:ideal-inclusion:item:element}
		$f \in \overline{E_g}$.
		
		\item\label{lem:ideal-inclusion:item:approx} 
		$\lim_{t \to \infty} f \land (tg) = f$ 
		(where the limit to be understood in norm).
		
		\item\label{lem:ideal-inclusion:item:small} 
		$\norm{(g-sf)^-} = o(s)$ as $s \downarrow 0$.
	\end{enumerate}
\end{lemma}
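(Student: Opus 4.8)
The plan is to prove the three equivalences \ref{lem:ideal-inclusion:item:inclusion}$\Leftrightarrow$\ref{lem:ideal-inclusion:item:element}, \ref{lem:ideal-inclusion:item:element}$\Leftrightarrow$\ref{lem:ideal-inclusion:item:approx} and \ref{lem:ideal-inclusion:item:approx}$\Leftrightarrow$\ref{lem:ideal-inclusion:item:small} separately. As a preliminary observation, since $f$ and $g$ are positive, all four assertions refer only to lattice operations among real vectors and to the norm, so I would first reduce to the case where $E$ is a real Banach lattice (for real $f$, membership in $\overline{E_g}$ is unaffected by passing to the real part, the real-part map being a contraction that sends $E_g$ into its real part).

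For \ref{lem:ideal-inclusion:item:inclusion}$\Leftrightarrow$\ref{lem:ideal-inclusion:item:element}, the implication \rightProof\ is immediate because $f\in E_f\subseteq\overline{E_f}$; for \leftProof\ I would use the fact --- already recorded before the lemma --- that $\overline{E_g}$ is the smallest closed ideal containing $g$: if $f\in\overline{E_g}$, then $\overline{E_g}$ is a closed ideal containing $f$, hence it contains $\overline{E_f}$.

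The heart of the argument is \ref{lem:ideal-inclusion:item:element}$\Rightarrow$\ref{lem:ideal-inclusion:item:approx}, which I would prove by approximation. The net $\bigl(f\wedge(tg)\bigr)_{t\ge 0}$ is increasing and dominated by $f$, so it suffices to bring it arbitrarily norm-close to $f$. Given $\varepsilon>0$, choose $h\in E_g$ with $\norm{f-h}<\varepsilon$ and replace $h$ by $\tilde h:=h^+\wedge f$; since the maps $u\mapsto u^+$ and $u\mapsto u\wedge f$ are non-expansive and $f\ge 0$, one still has $\norm{f-\tilde h}<\varepsilon$, while now $0\le\tilde h\le f$ and $\tilde h\in E_g$ (because $0\le\tilde h\le\modulus{h}\le cg$ for some $c\ge 0$). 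For $t\ge c$ it then follows that $f\wedge(tg)\ge\tilde h\wedge(tg)=\tilde h$, so $0\le f-f\wedge(tg)\le f-\tilde h$ and hence $\norm{f-f\wedge(tg)}<\varepsilon$. The converse \ref{lem:ideal-inclusion:item:approx}$\Rightarrow$\ref{lem:ideal-inclusion:item:element} is trivial, since each $f\wedge(tg)$ lies in $E_g$ and therefore the limit $f$ lies in $\overline{E_g}$.

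Finally, \ref{lem:ideal-inclusion:item:approx}$\Leftrightarrow$\ref{lem:ideal-inclusion:item:small} is a direct computation with the substitution $s=1/t$: the lattice identity $(g-sf)^-=(sf-g)^+=sf-sf\wedge g=\tfrac1t\bigl(f-f\wedge(tg)\bigr)$ gives $\norm{(g-sf)^-}=s\,\norm{f-f\wedge(tg)}$, so the requirement $\norm{(g-sf)^-}=o(s)$ as $s\downarrow 0$ is precisely the statement that $\norm{f-f\wedge(tg)}\to 0$ as $t\to\infty$. I expect the only point that needs genuine care to be the passage to $0\le\tilde h\le f$ in \ref{lem:ideal-inclusion:item:element}$\Rightarrow$\ref{lem:ideal-inclusion:item:approx}, together with the correct bookkeeping of the non-expansiveness of the lattice operations; the remainder is routine.
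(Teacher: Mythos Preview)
Your proof is correct and follows essentially the same approach as the paper for the equivalences \ref{lem:ideal-inclusion:item:inclusion}$\Leftrightarrow$\ref{lem:ideal-inclusion:item:element} and \ref{lem:ideal-inclusion:item:element}$\Leftrightarrow$\ref{lem:ideal-inclusion:item:approx}; the only cosmetic difference is that the paper handles the complex case by taking the positive part of the real part of each approximating vector, whereas you reduce to the real case at the outset.

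The one substantive difference concerns assertion~\ref{lem:ideal-inclusion:item:small}: the paper does not prove the equivalence with~\ref{lem:ideal-inclusion:item:small} directly but simply cites \cite[Proposition~4.6]{Glueck2018} for \ref{lem:ideal-inclusion:item:element}$\Leftrightarrow$\ref{lem:ideal-inclusion:item:small}. Your computation via the substitution $s=1/t$ and the identity $(g-sf)^- = s\bigl(f - f\land(tg)\bigr)$ gives a clean, self-contained proof of \ref{lem:ideal-inclusion:item:approx}$\Leftrightarrow$\ref{lem:ideal-inclusion:item:small}, which is an improvement in that it removes the external reference.
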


\begin{proof}
	\impliesProof{lem:ideal-inclusion:item:inclusion}{lem:ideal-inclusion:item:element}
	This implication is obvious.
	
	\impliesProof{lem:ideal-inclusion:item:element}{lem:ideal-inclusion:item:inclusion}
	This holds since $\overline{E_g}$ is a closed ideal 
	and $\overline{E_f}$ is the smalles closed ideal that contains $f$.
	
	\impliesProof{lem:ideal-inclusion:item:element}{lem:ideal-inclusion:item:approx}
	Due to~\ref{lem:ideal-inclusion:item:element} there exists a sequence of number $(c_n)$ in $[0,\infty)$ 
	and a sequence $(h_n)$ in $E$ which converges to $f$ 
	and which satisfies $\modulus{h_n} \le c_n g$ for each index $n$.
	By replacing each vector $h_n$ with the positive part of its real part 
	we may, and shall, assume that $0 \le h_n \le c_n g$ for each $n$. 
	
	Let $\varepsilon > 0$. 
	As $(f \land h_n)$ converges to $f$, 
	there exists an index $n_0$ such that $\norm{f - f \land h_{n_0}} \le \varepsilon$.
	For $t \ge c_{n_0}$ we have
	\begin{align*}
		0 
		\le 
		f \land h_{n_0} 
		\le 
		f \land (c_{n_0} g) \
		\le 
		f \land (t g) \le f,
	\end{align*}
	so $\norm{f - f \land (t g)} \le \varepsilon$.
	
	\impliesProof{lem:ideal-inclusion:item:approx}{lem:ideal-inclusion:item:element}
	If~\ref{lem:ideal-inclusion:item:approx} holds, then 
	$\big( f \land (ng) \big)$ is a sequence in $E_g$ that converges to $f$.
	
	\equivalentProof{lem:ideal-inclusion:item:element}{lem:ideal-inclusion:item:small}
	 A proof of this equivalence can, for instance, be found in \cite[Proposition~4.6]{Glueck2018}.
\end{proof}

\begin{lemma} 
	\label{lem:closed-ideals}
	Let $E$ be a real or complex Banach lattice and let $f,g,h \in E_+$. 
	The following assertions hold:
	\begin{enumerate}[label=\upshape(\alph*)]
		\item\label{lem:closed-ideals:item:inf} 
		$\overline{E_{f \land g}} = \overline{E_f} \cap \overline{E_g}$.
		
		\item\label{lem:closed-ideals:item:sup} 
		$\overline{E_{f \lor g}} = \overline{E_{f+g}} = \overline{E_f} + \overline{E_g}$.
		
		\item\label{lem:closed-ideals:item:disjoint} 
		If $f$ is disjoint to $h$ and $\overline{E_f} \subseteq \overline{E_{g+h}}$, 
		then actually $\overline{E_f} \subseteq \overline{E_g}$.
	\end{enumerate}
\end{lemma}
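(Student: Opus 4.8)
The plan is to establish the three items of Lemma~\ref{lem:closed-ideals} by reducing each to the characterizations provided by Lemma~\ref{lem:ideal-inclusion}, together with elementary lattice identities. Throughout I will use freely that $\overline{E_v}$ is the smallest closed ideal containing $v$, and that $\overline{E_v} \subseteq \overline{E_w}$ iff $v \in \overline{E_w}$ (Lemma~\ref{lem:ideal-inclusion}).

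For item~\ref{lem:closed-ideals:item:inf}: The inclusion $\overline{E_{f \land g}} \subseteq \overline{E_f} \cap \overline{E_g}$ is immediate, since $f \land g \le f$ gives $f \land g \in E_f \subseteq \overline{E_f}$, and likewise $f \land g \in \overline{E_g}$. For the reverse inclusion, let $v \in \overline{E_f} \cap \overline{E_g}$; it suffices to treat $0 \le v$ (decompose into positive/negative parts of real and imaginary parts, all of which again lie in the intersection since it is an ideal). By Lemma~\ref{lem:ideal-inclusion}\ref{lem:ideal-inclusion:item:approx}, $v \land (tf) \to v$ and $v \land (tg) \to v$ as $t \to \infty$. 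The plan is to show $v \land \big(t(f \land g)\big) \to v$; since $t(f \land g) = (tf) \land (tg)$, one has the estimate
\begin{align*}
	0 \le v - v \land \big((tf) \land (tg)\big) \le \big(v - v \land (tf)\big) + \big(v - v \land (tg)\big),
\end{align*}
which follows from the identity $v - v \land (a \land b) = \big(v - v \land a\big) \lor \big(v - v \land b\big) \le \big(v - v \land a\big) + \big(v - v \land b\big)$ for $v,a,b \ge 0$. Taking norms and using $t \to \infty$ gives $v \land \big(t(f\land g)\big) \to v$, so $v \in \overline{E_{f \land g}}$ by Lemma~\ref{lem:ideal-inclusion} again.

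For item~\ref{lem:closed-ideals:item:sup}: The equality $\overline{E_{f \lor g}} = \overline{E_{f+g}}$ follows from the sandwich $f \lor g \le f + g \le 2(f \lor g)$, which shows $E_{f \lor g} = E_{f+g}$ already at the level of (non-closed) principal ideals. For $\overline{E_{f+g}} = \overline{E_f} + \overline{E_g}$: the inclusion $\supseteq$ holds because $\overline{E_f} + \overline{E_g}$ is contained in every closed ideal containing both $f$ and $g$, hence in $\overline{E_{f+g}}$ (note $f, g \le f+g$); conversely, $f + g \in \overline{E_f} + \overline{E_g}$, and one checks that $\overline{E_f} + \overline{E_g}$ is itself a closed ideal — it is an ideal since a sum of ideals in a vector lattice is an ideal, and its closure is again an ideal, so it remains to see that $\overline{E_f} + \overline{E_g}$ is already closed. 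For this I would invoke the standard fact that the sum of the closed ideal $\overline{E_f}$ with any ideal is closed provided one of them is a projection band, or — more robustly and without band assumptions — argue via part~\ref{lem:closed-ideals:item:inf}: actually the cleanest route is to note that $\overline{E_{f+g}}$, being the closed ideal generated by $f+g$, equals the closed linear span of $E_f \cup E_g$, and that this closed span coincides with $\overline{\overline{E_f} + \overline{E_g}}$; then one must still remove the outer closure. To avoid this subtlety I would instead prove directly that $\overline{E_f} + \overline{E_g}$ is closed by the Riesz decomposition property: if $v_n + w_n \to u$ with $v_n \in \overline{E_f}$, $w_n \in \overline{E_g}$, decompose $u$ along the two closed ideals. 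The anticipated main obstacle of the whole lemma lies exactly here — showing the sum of the two closed ideals is closed, rather than merely dense in $\overline{E_{f+g}}$.

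For item~\ref{lem:closed-ideals:item:disjoint}: Suppose $f \land h = 0$ and $\overline{E_f} \subseteq \overline{E_{g+h}}$. By part~\ref{lem:closed-ideals:item:sup}, $\overline{E_{g+h}} = \overline{E_g} + \overline{E_h}$. Pick $v \in \overline{E_f}$ with $0 \le v$; then $v = v_g + v_h$ with $v_g \in \overline{E_g}$, $v_h \in \overline{E_h}$, and by the Riesz decomposition in the ideal $[0,v]$ we may take $0 \le v_g, v_h \le v$. Since $v \in \overline{E_f}$ and $v_h \in \overline{E_h}$ with $\overline{E_f}, \overline{E_h}$ generated by the disjoint elements $f, h$, the two closed ideals $\overline{E_f}$ and $\overline{E_h}$ are disjoint (their elements are mutually disjoint: disjointness of $f,h$ passes to $E_f$ and $E_h$ by the ideal property, then to the closures by continuity of lattice operations). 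Hence $0 \le v_h \le v$ with $v \perp v_h$ forces $v_h = 0$, so $v = v_g \in \overline{E_g}$. As $\overline{E_f}$ is spanned by its positive elements, $\overline{E_f} \subseteq \overline{E_g}$.
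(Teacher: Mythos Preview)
Your treatment of~\ref{lem:closed-ideals:item:inf} is correct and close in spirit to the paper's, though the paper proves the slightly more general statement $\overline{I \cap J} = \overline{I} \cap \overline{J}$ for arbitrary ideals via approximating sequences, whereas you work directly with the characterization $v \land (tw) \to v$ from Lemma~\ref{lem:ideal-inclusion}.

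The genuine gap is in~\ref{lem:closed-ideals:item:sup}, and you have correctly located it: you need that $\overline{E_f} + \overline{E_g}$ is closed, and none of your proposed routes get there. The Riesz-decomposition idea (``if $v_n + w_n \to u$ \dots\ decompose $u$ along the two closed ideals'') is circular, since decomposing $u$ presupposes $u \in \overline{E_f} + \overline{E_g}$, which is exactly what is to be shown. The paper does not prove this closedness from scratch either; it simply invokes the known fact that the sum of two closed ideals in a Banach lattice is again closed (Meyer-Nieberg, Proposition~1.2.2). That reference is the missing ingredient.

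This gap then propagates into your proof of~\ref{lem:closed-ideals:item:disjoint}, because your decomposition $v = v_g + v_h$ with $v_g \in \overline{E_g}$, $v_h \in \overline{E_h}$ uses precisely the inclusion $\overline{E_{g+h}} \subseteq \overline{E_g} + \overline{E_h}$. The paper avoids this dependence entirely: it proves~\ref{lem:closed-ideals:item:disjoint} directly from Lemma~\ref{lem:ideal-inclusion} via the sandwich
\[
0 \le f \land \big(t(g+h)\big) \le f \land (tg) + f \land (th) = f \land (tg) \le f,
\]
where the middle inequality is the subadditivity of $\land$ on $E_+$ and the subsequent equality uses $f \perp h$. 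Letting $t \to \infty$ forces $f \land (tg) \to f$, hence $\overline{E_f} \subseteq \overline{E_g}$. This is both shorter and independent of~\ref{lem:closed-ideals:item:sup}.
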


\begin{proof}
	\ref{lem:closed-ideals:item:inf} 
	We first observe that, for two arbitrary ideals $I,J \subseteq E$, 
	we have $\overline{I \cap J} = \overline{I} \cap \overline{J}$. 
	Indeed, the inclusion ``$\subseteq$'' is obvious. 
	If conversely $0 \le x \in \overline{I} \cap \overline{J}$, 
	then there exist sequences $(f_n)_{n \in \bbN} \subseteq I$ and $(g_n)_{n\in\bbN} \subseteq J$ which both converge to $x$. 
	After replacing all vectors $f_n$ and $g_n$ with the positive parts of their real parts, 
	we may assume that $f_n,g_n \in E_+$ for all $n$. 
	Now we have $x = \lim_{n \to \infty} f_n \land g_n \in \overline{I \cap J}$. 
	Since $\overline{I} \cap \overline{J}$ is itself an ideal, 
	every vector in $\overline{I} \cap \overline{J}$ 
	is a linear combination of positive vectors in $\overline{I} \cap \overline{J}$, 
	so we have also proved the inclusion ``$\supseteq$''.
	
	It is easy to see that $E_{f\land g} = E_f \cap E_g$; 
	thus it follows from what was said above that 
	$\overline{E_{f \land g}} = \overline{E_f \cap E_g} = \overline{E_f} \cap \overline{E_g}$.
	
	\ref{lem:closed-ideals:item:sup} We have $E_{f\lor g} = E_{f+g} = E_f + E_g$, 
	where the inclusion $E_{f+g} \subseteq E_f + E_g$ follows from the Riesz decomposition property of vector lattices 
	\cite[Propositions~II.1.6 and~II.11.2]{Schaefer1974}. 
	Hence, we conclude that
	\begin{align*}
		\overline{E_{f\lor g}} = \overline{E_{f+g}} = \overline{E_f + E_g} = \overline{E_f} + \overline{E_g},
	\end{align*}
	where the last equality follows from the fact 
	that the sum of two closed ideals in a Banach lattice is again a closed ideal 
	(this result can be found in \cite[Proposition~1.2.2]{Meyer-Nieberg1991} for real Banach lattices; 
	for complex Banach lattices it is a simple consequence of the real case).
	
	\ref{lem:closed-ideals:item:disjoint} 
	For $t \ge 0$ we have
	\begin{align*}
		0 
		\le 
		f \land \big(t(g + h )\big) 
		\le 
		f \land (tg) + f \land (th)
		= 
		f \land (tg)
		\le 
		f;
	\end{align*}
	for the second inequality we used that the infimum operation $\land$ is, 
	when restricted to the positive cone, subadditive in each component 
	\cite[Corollary to Proposition~II.1.6 on p.\,53]{Schaefer1974}.
	As $\overline{E_f} \subseteq \overline{E_{g+h}}$, 
	Lemma~\ref{lem:ideal-inclusion} gives us that $f \land \big(t(g + h )\big) \to f$ as $t \to \infty$.
	Thus, also $f \land (tg) \to f$ as $t \to \infty$.
	By applying Lemma~\ref{lem:ideal-inclusion} once again, 
	we see that $\overline{E_f} \subseteq \overline{E_g}$, as claimed.
\end{proof}

We also need the following simple observation about 
how positive operators preserve inclusion between closed ideals generated by single elements.

\begin{lemma}
	\label{lem:ideal-unter-op}
	Let $E,F$ Banach lattices over the same field.
	\begin{enumerate}[label=\upshape(\alph*)]
		\item\label{lem:ideal-unter-op:item:general} 
		Let $T: E \to F$ be a positive linear operator and let $f,g \in E_+$. 
		If $\overline{E_f} \subseteq \overline{E_g}$, then $\overline{F_{Tf}} \subseteq \overline{F_{Tg}}$.
		
		\item\label{lem:ideal-unter-op:item:higher-powers} 
		Let $T: E \to E$ be a positive lineaer operator, let $f \in E_+$, 
		and assume that $\overline{E_{T^nf}} \supseteq \overline{E_{T^{n-1}f}}$ for some integer $n \ge 1$. 
		Then even $\overline{E_{T^mf}} \supseteq \overline{E_{T^{m-1}f}}$ for all integers $m \ge n$.
	\end{enumerate}
\end{lemma}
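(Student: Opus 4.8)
The plan is to prove both parts of Lemma~\ref{lem:ideal-unter-op} by reducing everything to the characterisations of closed ideal inclusion already established in Lemma~\ref{lem:ideal-inclusion}, together with the basic identities of Lemma~\ref{lem:closed-ideals}.

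For part~\ref{lem:ideal-unter-op:item:general}, assume $\overline{E_f} \subseteq \overline{E_g}$. By Lemma~\ref{lem:ideal-inclusion}\,\ref{lem:ideal-inclusion:item:approx} this means $f \land (tg) \to f$ in norm as $t \to \infty$. Applying the positive (hence order bounded, hence bounded) operator $T$ and using that $T$ is a positive operator between vector lattices, so $T(f \land tg) \le Tf$ and $T(f \land tg) \le t\,Tg$, we get $0 \le T(f\land tg) \le Tf \land (t\,Tg) \le Tf$; since $\norm{Tf - T(f\land tg)} \le \norm{T}\,\norm{f - f\land tg} \to 0$, the sandwiched term $Tf \land (t\,Tg)$ also converges to $Tf$ in norm. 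One more appeal to Lemma~\ref{lem:ideal-inclusion} (the equivalence of \ref{lem:ideal-inclusion:item:approx} and \ref{lem:ideal-inclusion:item:inclusion}, applied in $F$) yields $\overline{F_{Tf}} \subseteq \overline{F_{Tg}}$. The only mild subtlety here is that the inequality $T(f\land tg) \le Tf \land (t\,Tg)$ uses positivity of $T$ in each of the two factors separately; no order continuity is needed.

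For part~\ref{lem:ideal-unter-op:item:higher-powers}, I would argue by induction on $m \ge n$, the base case $m = n$ being the hypothesis. For the inductive step, suppose $\overline{E_{T^mf}} \supseteq \overline{E_{T^{m-1}f}}$. Apply part~\ref{lem:ideal-unter-op:item:general} with the operator $T$ and the vectors $g := T^{m}f$ and $f := T^{m-1}f$ (the hypothesis says $\overline{E_{T^{m-1}f}} \subseteq \overline{E_{T^{m}f}}$), obtaining $\overline{E_{T^{m}f}} = \overline{E_{T(T^{m-1}f)}} \subseteq \overline{E_{T(T^{m}f)}} = \overline{E_{T^{m+1}f}}$, which is exactly $\overline{E_{T^{m+1}f}} \supseteq \overline{E_{T^{m}f}}$. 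This closes the induction.

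I do not anticipate any real obstacle: the whole statement is a short consequence of the normed approximation description of closed principal ideals. The one point that requires a moment's care is making sure that the estimate relating $T(f \land tg)$ to $Tf \land (t\,Tg)$ is justified purely by positivity of $T$, so that part~\ref{lem:ideal-unter-op:item:general} holds for arbitrary positive operators between Banach lattices over the same field (real or complex); in the complex case one reduces as usual to the real parts, exactly as in the proofs of the preceding lemmas. Everything else is bookkeeping.
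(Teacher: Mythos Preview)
Your proof is correct. Part~\ref{lem:ideal-unter-op:item:higher-powers} is handled exactly as in the paper: induction on $m$, invoking part~\ref{lem:ideal-unter-op:item:general} for the step from $m$ to $m+1$.

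For part~\ref{lem:ideal-unter-op:item:general} you take a slightly different but equally valid path through Lemma~\ref{lem:ideal-inclusion}: you use the approximation characterisation~\ref{lem:ideal-inclusion:item:approx} and a sandwich argument with $T(f\land tg) \le Tf \land (t\,Tg) \le Tf$, whereas the paper uses the element characterisation~\ref{lem:ideal-inclusion:item:element} directly, observing in one line that
\[
	Tf \in T\,\overline{E_g} \subseteq \overline{T E_g} \subseteq \overline{F_{Tg}},
\]
where the last inclusion comes from $\modulus{Th} \le T\modulus{h} \le c\,Tg$ whenever $\modulus{h} \le cg$. Both arguments rely only on continuity and positivity of $T$; the paper's is a bit shorter, yours makes the order-theoretic mechanism (how positivity interacts with the lattice operations) more explicit. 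Neither has any real advantage over the other.
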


\begin{proof}
	\ref{lem:ideal-unter-op:item:general} 
	One has $Tf \in T \overline{E_g} \subseteq \overline{TE_g} \subseteq \overline{F_{Tg}}$. 
	This implies the claim due to Lemma~\ref{lem:ideal-inclusion}.
	
	\ref{lem:ideal-unter-op:item:higher-powers}
	For $m = n+1$ the claim follows from~\ref{lem:ideal-unter-op:item:general}, 
	and for general $m \ge n$ the claim thus follows by induction.
\end{proof}

Now we first prove a version of Theorem~\ref{thm:main-everywhere-bl} for lattice homomorphisms 
before we actually show the theorem itself.

\begin{theorem} 
	\label{thm:lattice-homomorphism}
	Let $T: E \to E$ be a linear lattice homomorphism on a complex Banach lattice $E$.
	Assume that for each $f \in E_+$ there exists an integer $n \ge 1$ 
	such that $\overline{E_{T^nf}} \supseteq \overline{E_{T^{n-1}f}}$ .
	Then $\specPnt(T) \subseteq [0,\infty)$.
\end{theorem}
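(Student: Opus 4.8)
The plan is to take an eigenvalue $\lambda \in \bbC \setminus \{0\}$ of the lattice homomorphism $T$ with eigenvector $f$ and show $\lambda \ge 0$; equivalently, by scaling, it suffices to assume $\modulus{\lambda} = 1$ and deduce $\lambda = 1$, since once we know the peripheral point spectrum of $T/\spr(T)$-type rescalings lie in $[0,\infty)$ the general case follows. Actually a cleaner route: if $Tf = \lambda f$ with $f \ne 0$, consider the positive vector $u := \modulus{f}$. Since $T$ is a lattice homomorphism, $T\modulus{f} = \modulus{Tf} = \modulus{\lambda}\modulus{f}$, so $u$ is itself an eigenvector of $T$ with positive eigenvalue $\modulus{\lambda}$, and moreover $T^k u = \modulus{\lambda}^k u$ for all $k$. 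The key structural point is that a lattice homomorphism acts on the principal ideals generated by these iterates in a controlled way: because $T$ preserves disjointness and suprema, $\overline{E_{T^k u}}$ equals $\overline{T^k E_u}$-closure, and one expects $\overline{E_{Tu}} \subseteq \overline{E_u}$ to fail in general but the hypothesis forces, for our specific $u$, that the chain $\overline{E_u} \subseteq \overline{E_{Tu}} = \overline{E_u}$ stabilizes — because $T^k u$ is just a positive multiple of $u$! So $\overline{E_{T^k u}} = \overline{E_u}$ for all $k$, and the hypothesis at the vector $u$ gives nothing new directly; the content must come from applying the hypothesis to other vectors in $\overline{E_u}$ and exploiting the lattice-homomorphism property to compare the phase behaviour of $f$ with the support structure.

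Concretely, I would restrict to the band or closed ideal $F := \overline{E_u}$, which is $T$-invariant and on which $T\restricted{F}$ is again a lattice homomorphism satisfying the same hypotheses, and on which $u$ is a quasi-interior fixed point of $\modulus{\lambda}^{-1}T$. Replacing $T$ by $\modulus{\lambda}^{-1}T$ (still a lattice homomorphism, still satisfying the support-increase hypothesis by Lemma~\ref{lem:ideal-unter-op}, since scaling does not change any $\overline{E_v}$), we may assume $\modulus{\lambda} = 1$ and that $u$ is a quasi-interior fixed point of $T$ in $F$. Now the classical fact about lattice homomorphisms with a quasi-interior fixed point enters: write $f = u \cdot \varphi$ in an appropriate functional-calculus sense (work in $\Cont(K)$ via the representation of $F$ as a dense ideal modelled on the principal ideal $F_u \cong \Cont(K)$ for a compact space $K$), where $\varphi$ is a bounded function with $\modulus{\varphi} = 1$; then $Tf = \lambda f$ translates into the statement that the lattice homomorphism $T$, which on $F_u$ is given by a weighted composition operator $g \mapsto w \cdot (g \circ \psi)$ with $w = Tu/u = 1$ (since $Tu = u$), i.e.\ a pure composition operator $g \mapsto g \circ \psi$ for some continuous $\psi: K \to K$, satisfies $\varphi \circ \psi = \lambda \varphi$.

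The heart of the matter is then a dynamical argument on $\psi: K \to K$: the support hypothesis, read through Lemma~\ref{lem:ideal-inclusion} and Lemma~\ref{lem:closed-ideals}, says precisely that for each $g \ge 0$ some iterate satisfies $\supp(g \circ \psi^{n}) \supseteq \supp(g \circ \psi^{n-1})$, i.e.\ $\psi^{-n}(\supp g) \supseteq \psi^{-(n-1)}(\supp g)$, which forces $\psi$ to be, in a measure-algebraic or open-support sense, "surjective onto its own image" with no wandering — ultimately that $\psi^n = \id$ is impossible unless trivial, and more to the point that $\varphi \circ \psi = \lambda\varphi$ with $\modulus{\lambda}=1$ and $\modulus{\varphi} \equiv 1$ propagates $\varphi(\psi^k(x)) = \lambda^k \varphi(x)$, so if any point is (approximately) periodic under $\psi$ one gets $\lambda^{\text{period}} = 1$, while the support-increase condition, applied to $g = \modulus{\re(c f)}$ for suitable constants $c$, rules out the nontrivial roots of unity by producing a function whose support strictly grows under $\psi$-pullback forever, contradicting power boundedness / weak compactness of orbits. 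The main obstacle I anticipate is making the $\Cont(K)$-representation and the passage between "support" (an open-set notion on $K$) and "closed ideal generated by a vector" fully rigorous when $F$ is not itself a $\Cont(K)$-space — this is exactly the subtlety the introduction flags around Lemma~\ref{lem:ideal-inclusion-c-k} — so I would instead try to run the argument intrinsically: use only Lemmas~\ref{lem:ideal-inclusion}, \ref{lem:closed-ideals}, \ref{lem:ideal-unter-op}, the identity $T\modulus{h} = \modulus{Th}$, and the disjointness-preservation, to show that if $\lambda = e^{2\pi i \theta}$ with $\theta \notin \bbZ$ then the real and imaginary parts $f_1 = \re f$, $f_2 = \im f$ generate, together with $u$, a configuration of closed ideals on which the support-increase hypothesis is self-contradictory, deferring the genuinely hard "no wandering" estimate to the eigenvalue equation rewritten as $T(f_1 + i f_2) = (\cos\theta + i\sin\theta)(f_1 + if_2)$ and a positivity argument bounding $\norm{(u - s\,\re(\bar\lambda^k f))^-}$ from below uniformly in $k$, contradicting part~\ref{lem:ideal-inclusion:item:small} of Lemma~\ref{lem:ideal-inclusion}.
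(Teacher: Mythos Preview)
Your proposal circles the right territory but does not close, and the missing ingredient is specific. The $\Cont(K)$-representation route you sketch is, as you yourself flag, obstructed by the mismatch between open supports on $K$ and closed principal ideals in $E$; and your fallback ``intrinsic'' endgame --- bounding $\norm{(u - s\,\re(\bar\lambda^k f))^-}$ from below uniformly in $k$ --- goes nowhere obvious, since $\modulus{f} = u$ forces $\re(\bar\lambda^k f) \in [-u,u]$ for every $k$, and there is no apparent contradiction with Lemma~\ref{lem:ideal-inclusion}\ref{lem:ideal-inclusion:item:small}. More fundamentally, your plan tries to handle an arbitrary unimodular $\lambda \ne 1$ directly, and for $\lambda$ close to $1$ the real/imaginary split of the eigenvalue equation carries no useful sign information.

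What you are missing is a \emph{cyclicity reduction} followed by a \emph{sign computation}. The point spectrum of a lattice homomorphism is cyclic, so if some eigenvalue lies off $[0,\infty)$ you may pick one of the form $e^{i\theta}$ with $\cos\theta \le 0$ and $\sin\theta \ge 0$. Write the eigenvector as $z = x + iy$, use Lemma~\ref{lem:ideal-unter-op}\ref{lem:ideal-unter-op:item:higher-powers} to get a single $n$ for which the hypothesis holds simultaneously at $x^+, x^-, y^+, y^-$, and set $v := T^{n-1}x$, $w := T^{n-1}y$. Because $T$ is a lattice homomorphism one has $T(v^+) = (Tv)^+$, and because $Tv = \cos\theta\, v - \sin\theta\, w$ with the above sign constraints, $(Tv)^+ \le v^- + w^-$. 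The support hypothesis then gives $\overline{E_{v^+}} \subseteq \overline{E_{T(v^+)}} \subseteq \overline{E_{v^- + w^-}}$, and disjointness of $v^+$ from $v^-$ together with Lemma~\ref{lem:closed-ideals}\ref{lem:closed-ideals:item:disjoint} strips this to $\overline{E_{v^+}} \subseteq \overline{E_{w^-}}$. Three more such inclusions chain into $\overline{E_{v^+}} \subseteq \overline{E_{w^-}} \subseteq \overline{E_{v^-}}$, forcing $v^+ = 0$ and similarly $v^- = w^\pm = 0$, hence $T^{n-1}z = 0$, a contradiction. The cyclicity step is not cosmetic: without $\cos\theta \le 0$ the inequality $(Tv)^+ \le v^- + w^-$ simply fails, and no dynamical or orbit-compactness argument substitutes for it.
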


\begin{proof}
	Suppose for a contradiction that $T$ has an eigenvalue which is not contained in $[0,\infty)$. 
	The point spectrum of every lattice homomorphism is \emph{cyclic},
	i.e., whenever $re^{i\theta}$ is an eigenvalue (for some $r \ge 0$ and $\theta \in [-\pi,\pi)$),
	then so is $re^{in\theta}$ for each $n \in \bbZ$ \cite[Corollary~2 to Proposition~V.4.2 on p.\,324]{Schaefer1974}.
	Thus, we can find a non-zero eigenvalue $\lambda$ of $T$ whose real part satisfies $\re \lambda \le 0$; 
	and since the point spectrum of $T$ is invariant under complex conjugation, we may moreover assume that $\im \lambda \ge 0$. 
	Furthermore, there is no loss of generality in assuming that $\modulus{\lambda} = 1$ 
	(otherwise, we multiply $T$ with an appropriate strictly positive scalar).
	So let $\lambda = e^{i\theta} = \cos \theta + i \sin \theta$, where $\cos \theta \le 0$ and $\sin \theta \ge 0$.
	
	Now, let $0 \not= z = x+iy$ be an eigenvector of $T$ for the eigenvalue $e^{i\theta}$, 
	where $x$ and $y$ are contained in the real part of $E$. 
	It follows from the assumptions of the theorem 
	and from Lemma~\ref{lem:ideal-unter-op}\ref{lem:ideal-unter-op:item:higher-powers} that there exists an integer $n \ge 1$ 
	such that $\overline{E_{T^nf}} \supseteq \overline{E_{T^{n-1}f}}$ for each $f \in \{x^+,x^-,y^+,y^-\}$. 
	Let us define $v := T^{n-1}x$ and $w := T^{n-1}y$. 
	From $T(v + i w) = T T^{n-1}z = e^{i\theta}T^{n-1}z = e^{i\theta}(v + iw)$ we conclude that
	\begin{align*}
		Tv = \cos \theta \; v - \sin \theta \; w 
		\qquad \text{and} \qquad 
		Tw = \cos \theta \; w + \sin \theta \; v.
	\end{align*}
	Those two equalities, along with $\cos \theta \le 0$ and $\sin \theta \ge 0$ 
	and the fact that $T$ is a lattice homomorphism, 
	yield the estimates%
	\footnote{We use the fact that $(a+b)^+ \le a^+ + b^+$ and $(a+b)^- \le a^- + b^-$ 
	for all elements $a,b,c$ of a vector lattice.}
	\begin{align}
		\label{eq:lattice-homomorphism:support-estimate}
		\begin{split}
			& 
			T(v^+) 
			= 
			(Tv)^+ \le -\cos \theta \; v^- + \sin \theta \; w^- \le v^- + w^-, \\
			& 
			T(v^-) 
			= 
			(Tv)^- \le -\cos \theta \; v^+ + \sin \theta \; w^+ \le v^+ + w^+, \\
			& 
			T(w^+) 
			= 
			(Tw)^+ \le - \cos \theta \; w^- + \sin \theta \; v^+ \le w^- + v^+, \\
			& 
			T(w^-) 
			= 
			(Tw)^- \le - \cos \theta \; w^+ + \sin \theta \; v^- \le w^+ + v^-.
		\end{split}
	\end{align}
	By using the first of these four estimates, and again that $T$ is a lattice homomorphism, one concludes
	\begin{align*}
		\overline{E_{v^+}} 
		= 
		\overline{E_{T^{n-1}(x^+)}} 
		\subseteq 
		\overline{E_{T^n(x^+)}} 
		= 
		\overline{E_{T(v^+)}} 
		\subseteq 
		\overline{E_{v^- + w^-}},
	\end{align*}
	As $v^+$ is disjoint to $v^-$ this implies 
	\begin{align*}
		\overline{E_{v^+}} \subseteq \overline{E_{w^-}},
	\end{align*}
	see Lemma~\ref{lem:closed-ideals}\ref{lem:closed-ideals:item:disjoint}. 
	By the same reasoning, the second, third, and fourth estimate in~\eqref{eq:lattice-homomorphism:support-estimate}
	give us
	\begin{align*}
		& 
		\overline{E_{v^-}} 
		\subseteq 
		\overline{E_{T(v^-)}} 
		\subseteq 
		\overline{E_{v^+ + w^+}}, 
		\qquad \text{hence} \qquad 
		\overline{E_{v^-}} 
		\subseteq 
		\overline{E_{w^+}}, \\
		& 
		\overline{E_{w^+}} 
		\subseteq 
		\overline{E_{T(w^+)}} 
		\subseteq 
		\overline{E_{w^- + v^+}}, 
		\qquad \text{hence} \qquad 
		\overline{E_{w^+}} 
		\subseteq 
		\overline{E_{v^+}}, \\
		\text{and} \qquad
		& 
		\overline{E_{w^-}} 
		\subseteq 
		\overline{E_{T(w^-)}} 
		\subseteq 
		\overline{E_{w^+ + v^-}}, 
		\qquad \text{hence} \qquad 
		\overline{E_{w^-}} 
		\subseteq 
		\overline{E_{v^-}}.
	\end{align*}
	Now we can see that $\overline{E_{v^+}} \subseteq \overline{E_{w^-}} \subseteq \overline{E_{v^-}}$, 
	so it follows from Lemma~\ref{lem:closed-ideals}\ref{lem:closed-ideals:item:disjoint} 
	that $\overline{E_{v^+}} \subseteq \overline{E_0} = \{0\}$, so $v^+ = 0$. 
	Similarly, we obtain $v^- = 0$, $w^+ = 0$ and $w^- = 0$. 
	Thus, $0 = T^{n-1}z = e^{i(n-1)\theta}z$, which is a contradiction to $z \not= 0$.
\end{proof}

By employing the celebrated Jacobs--de Leeuw--Glicksberg decomposition theory, 
it is now easy to derive Theorem~\ref{thm:main-everywhere-bl} from Theorem~\ref{thm:lattice-homomorphism}.

\begin{proof}[Proof of Theorem~\ref{thm:main-everywhere-bl}]
	Since $T$ is weakly almost periodic we can employ the Jacobs--de Leeuw--Glicksberg decomposition 
	presented, for instance, in \cite[Section~2.4]{Krengel1985}.
	It gives us a bounded linear projection $P$ on $E$ with the following properties:
	\begin{enumerate}[label=(\alph*)]
		\item 
		The projection $P$ is positive. 
		Hence, as follows from \cite[Proposition~III.11.5]{Schaefer1974}, 
		the range $PE$ is a Banach lattice in its own right 
		with respect to an equivalent norm with positive cone $(PE)_+ := E_+ \cap PE$. 
		
		\item 
		The projection $P$ commutes with $T$ and thus, $T$ leaves both the kernel $\ker P$ and the range $PE$ invariant.
		The restriction $T\restricted{PE}$ is a bijection from $PE$ to $PE$.
		
		Moreover, both $T\restricted{PE}$ and its inverse $(T\restricted{PE})^{-1}$ are positive. 
		Thus, $T\restricted{PE}$ is a lattice isomorphism on $PE$. 
		
		\item 
		The space $PE$ is the closed span of all eigenvectors of $T$ that belong to unimodular eigenvalues. 
	\end{enumerate}
	Due to property~(c), it suffices to prove 
	that $T\restricted{PE}$ does not have any eigenvalues in $\bbT \setminus \{1\}$. 
	To see this, we only have to show that $T\restricted{PE}$ 
	satisfies the assumptions of Theorem~\ref{thm:lattice-homomorphism}.
	According to~(a), $PE$ is a Banach lattice, and according to~(b), $T\restricted{PE}$ is a lattice homomorphism.
	
	So finally let $f \in (PE)_+$. 
	By assumption there is an integer $n \ge 1$ 
	such that $\overline{E_{T^nf}} \supseteq \overline{E_{T^{n-1}f}} \ni T^{n-1}f$.
	By applying Lemma~\ref{lem:ideal-unter-op}\ref{lem:ideal-unter-op:item:general} 
	to the positive operator $P: E \to PE$ we obtain $\overline{(PE)_{T^nf}} \supseteq \overline{(PE)_{T^{n-1}f}}$;
	here we used that $P$ is a projection, and thus $PT^m f = T^m f$ for all $m \ge 0$.
	
	So Theorem~\ref{thm:lattice-homomorphism} is indeed applicable to the operator $T\restricted{PE}$, 
	and we conclude that $\specPnt(T\restricted{PE}) \cap \bbT \subseteq \{1\}$.
\end{proof}

In order to derive Theorem~\ref{thm:main-everywhere} from Theorem~\ref{thm:main-everywhere-bl} 
was used that the inlcusion $\overline{E_	g} \supseteq \overline{E_f}$ is equivalent 
to the almost everywhere inclusion $\supp(g) \supseteq \supp(f)$
if $E$ is an $L^p$-space over a $\sigma$-finite measure space and $p \in [1,\infty)$.
It is important to note that the situation is very different on spaces of continuous functions.
Recall that, for compact Hausdorff space $K$ and a continuous mapping $f: K \to \bbC$ 
the \emph{support} of $f$ is defined as
\begin{align*}
	\supp(f) := \overline{\{x \in K: \; f(x) \not= 0\}}.
\end{align*}
Now consider for instance the space $K = [-1,1]$ and the functions $u,\one \in E := \Cont([-1,1])$, 
where $\one$ denotes the constant function with value $1$ and $u(x) = 1 - \modulus{x}$ for all $x \in [-1,1]$. 
Then both functions $u$ and $\one$ have support $[-1,1]$. 
However, we have $E_{\one} = E$ and thus $\overline{E_{\one}} = E$, while
\begin{align*}
	\overline{E_u} 
	= 
	C_0((-1,1)) := \{f \in E: \; f(-1) = f(1) = 0\}
	\subsetneq 
	E.
\end{align*}
For this reason, we cannot expect Theorem~\ref{thm:main-everywhere} to hold on spaces of continuous functions%
\footnote{We note that, on spaces of continuous functions, inclusions between supports of functions 
need to be treated as set inclusions in the usual sense;
it is not possible to neglect any sets of measure $0$, since there is no canonical choice of a measure.}
(but see, however, Theorem~\ref{thm:everywhere-c-k} and Example~\ref{exa:nagler} below). 
Here is a concrete counterexample:

\begin{example}	
	\label{ex:weakly-expanding}
	Let $E = \Cont([-1,1])$. 
	There exists a positive finite rank operator $T \in \calL(E)$ which satisfies $\spr(T) = \norm{T} = 1$ 
	and which has the following properties:
	\begin{enumerate}[label=(\alph*)]
		\item 
		One has $\supp(Tf) = [-1,1]$ for every non-zero $f \in E_+$; 
		in particular, $\supp(Tf) \supseteq \supp(f)$ for all $f \in E_+$.
		
		\item 
		The point spectrum of $T$ contains the number $-1$.
	\end{enumerate}
	Indeed, let $u,v,w \in E_+$ be given by
	\begin{align*}
		u(x) = 1 - \modulus{x}, 
		\qquad 
		v(x) = |x| \one_{[-1,0]}(x), 
		\qquad 
		w(x) = |x| \one_{[0,1]}(x)
	\end{align*}
	for all $x \in [-1,1]$. 
	We define $T \in \calL(E)$ by
	\begin{align*}
		Tf 
		= 
		\frac{1}{2}\int_{[-1,1]} f \dx x \cdot u + f(1) \, v + f(-1) \, w
	\end{align*}
	for all $f \in E$. 
	Clearly, $T$ has finite rank, is positive, and satisfies $T \one = \one$; 
	the latter two properties in turn imply $\spr(T) = \norm{T} = 1$.
	
	Since $\supp(u) = [-1,1]$, it follows that $\supp(Tf) = [-1,1]$ for every non-zero $f \ge 0$; this proves~(a). 
	On the other hand, $T(v-w) = -v+w$, so $-1$ is an eigenvalue of $T$; this proves~(b).
\end{example}

\begin{remark}
	In \cite[Exercise~8(c) on p.\,353]{Schaefer1974} the following assertion is claimed: 
	
	Let $E$ be a Banach lattice, let $T \in \calL(E)$ be positive, 
	let us say for the sake of convenience with spectral radius $1$, 
	and assume that 
	$$\sup_{\lambda \in (1,\infty)} (\lambda - 1) \norm{(\lambda-T)^{-1}} < \infty.$$ 
	If for every non-zero $f \in E_+$ there exists an integer $n \ge 1$ such that 
	$T^nf$ is a \emph{weak order unit} of $E$ 
	(see \cite[p.\,55]{Schaefer1974} for definition of this notion),
	then $\specPnt(T) \cap \bbT \subseteq \{1\}$.
	
	Examples~\ref{ex:weakly-expanding} shows that this claim is not correct.
\end{remark}

The problem in Example~\ref{ex:weakly-expanding} is that the support of a continuous function $f$, 
defined as the closure of the set where $f$ does not vanish, 
is not well-suited to the closed ideals in $\Cont(K)$ generated by $f$. 
However, the situation becomes much better if we consider the \emph{open support} 
of functions. 
For a topological space $X$ and a continuous scalar-valued function $f$ on $X$ the open support 
is the (open) set
\begin{align*}
	\opensupp(f) := \{x \in X: \; f(x) \not= 0\}.
\end{align*}
On compact spaces, inclusions of the type $\overline{E_f} \supseteq \overline{E_g}$ can be characterized 
by means of the open supports of $f$ and $g$ (see Lemma~\ref{lem:ideal-inclusion-c-k} below);
thus we get the following version of Theorem~\ref{thm:main-everywhere-bl} on $\Cont(K)$.

\begin{theorem}
	\label{thm:everywhere-c-k}
	Let $K$ be a compact Hausdorrf space and let 
	$T: \Cont(K) \to \Cont(K)$ be a positive linear operator which is weakly almost periodic. 
	Assume that, for each $0 \le f \in \Cont(K)$, 
	there exists an integer $n \ge 1$ such that $\opensupp(T^nf) \supseteq \opensupp(T^{n-1}f)$. 
	Then $\specPnt(T) \cap \bbT \subseteq \{1\}$.
\end{theorem}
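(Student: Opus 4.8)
The plan is to derive Theorem~\ref{thm:everywhere-c-k} from its Banach lattice counterpart, Theorem~\ref{thm:main-everywhere-bl}, by translating the open-support hypothesis into an inclusion of closed principal ideals. The bridge is Lemma~\ref{lem:ideal-inclusion-c-k}, which characterizes, for $f,g \in \Cont(K)_+$, the inclusion $\overline{\Cont(K)_g} \supseteq \overline{\Cont(K)_f}$ purely in terms of the open supports, namely as $\opensupp(g) \supseteq \opensupp(f)$. So the substance of the argument is exactly this lemma, and everything around it is routine.

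First I would record the description of the closed principal ideals of $\Cont(K)$ that underlies that lemma: for $0 \le f \in \Cont(K)$ the closed ideal $\overline{\Cont(K)_f}$ equals $\{h \in \Cont(K): h = 0 \text{ on } \{f = 0\}\}$. The inclusion $\subseteq$ is immediate, since every $h$ with $\modulus{h} \le cf$ vanishes where $f$ does, and this passes to the closure. For $\supseteq$, given $h$ vanishing on $\{f = 0\}$ and $\varepsilon > 0$, the truncation $h_\varepsilon := \sgn(h)\,(\modulus{h} - \varepsilon)^+$ satisfies $\norm{h - h_\varepsilon} \le \varepsilon$ and vanishes outside the compact set $\{\modulus{h} \ge \varepsilon\}$, which is contained in $\opensupp(f)$ because $h$ vanishes on $\{f = 0\}$; hence $f \ge \delta$ on $\{\modulus{h} \ge \varepsilon\}$ for some $\delta > 0$, so $\modulus{h_\varepsilon} \le (\norm{h}/\delta)\,f$ and $h_\varepsilon \in \Cont(K)_f$. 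From this description the equivalence claimed in Lemma~\ref{lem:ideal-inclusion-c-k} drops out: $\opensupp(g) \supseteq \opensupp(f)$ forces $\{g = 0\} \subseteq \{f = 0\}$ and hence $\overline{\Cont(K)_f} \subseteq \overline{\Cont(K)_g}$; conversely, testing the latter inclusion on $f \in \overline{\Cont(K)_f}$ shows that $f$ vanishes on $\{g = 0\}$, i.e.\ $\opensupp(f) \subseteq \opensupp(g)$.

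Granting this, the hypothesis that for each $0 \le f \in \Cont(K)$ there is an $n \ge 1$ with $\opensupp(T^n f) \supseteq \opensupp(T^{n-1} f)$ becomes precisely the hypothesis that for each $f \in E_+$ there is an $n \ge 1$ with $\overline{E_{T^n f}} \supseteq \overline{E_{T^{n-1} f}}$, where $E := \Cont(K)$. Since $E$ is a complex Banach lattice and $T$ is positive and, by assumption, weakly almost periodic, Theorem~\ref{thm:main-everywhere-bl} applies verbatim and yields $\specPnt(T) \cap \bbT \subseteq \{1\}$. The only step requiring genuine work is the density statement above (equivalently, Lemma~\ref{lem:ideal-inclusion-c-k}), and there the essential ingredient is the compactness of $K$: it is what makes $\{\modulus{h} \ge \varepsilon\}$ a compact subset of the open set $\opensupp(f)$, so that $f$ is bounded away from $0$ on it; everything else is bookkeeping.
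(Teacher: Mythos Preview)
Your proposal is correct and follows exactly the paper's strategy: reduce Theorem~\ref{thm:everywhere-c-k} to Theorem~\ref{thm:main-everywhere-bl} via Lemma~\ref{lem:ideal-inclusion-c-k}. The only difference is in how you establish the lemma: the paper proves the equivalence directly using the characterization $f \land (tg) \to f$ from Lemma~\ref{lem:ideal-inclusion} together with Dini's theorem, whereas you first identify $\overline{\Cont(K)_f}$ explicitly as the ideal of functions vanishing on $\{f=0\}$ by a truncation argument. Both arguments hinge on compactness of $K$ in the same way (positive continuous functions on compact subsets are bounded away from zero), so this is a cosmetic variation rather than a genuinely different route.
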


The theorem is an immediate consequence of Theorem~\ref{thm:main-everywhere-bl} and the following lemma.

\begin{lemma}
	\label{lem:ideal-inclusion-c-k}
	Let $K$ be a compact Hausdorff space and let $0 \le f,g \in E := \Cont(K)$.
	Then $\overline{E_f} \subseteq \overline{E_g}$ if and only if $\opensupp(f) \subseteq \opensupp(g)$.
\end{lemma}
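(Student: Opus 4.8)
The plan is to reduce the containment $\overline{E_f}\subseteq\overline{E_g}$ to pointwise information about $f$ and $g$, exploiting the explicit formula $E_g=\{h\in E:\ \exists\,c\ge 0,\ \modulus{h}\le cg\}$, and then to invoke the compactness of $K$ for the one genuinely analytic step.

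For the implication ``$\overline{E_f}\subseteq\overline{E_g}\ \Rightarrow\ \opensupp(f)\subseteq\opensupp(g)$'' I would argue as follows. Put $Z:=K\setminus\opensupp(g)$, which is closed. Any $h\in E_g$ is dominated by some multiple $cg$ of $g$ in modulus and therefore vanishes on $Z$; since pointwise vanishing on the fixed set $Z$ is preserved under uniform limits, the set $J_Z:=\{h\in E:\ h\restricted{Z}=0\}$ is a norm-closed ideal containing $E_g$, hence $\overline{E_g}\subseteq J_Z$. As $f\in\overline{E_f}\subseteq\overline{E_g}\subseteq J_Z$, the function $f$ vanishes on $Z$, which says precisely that $\opensupp(f)=\{x\in K:\ f(x)\neq 0\}\subseteq\opensupp(g)$.

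For the converse I would use Lemma~\ref{lem:ideal-inclusion}: by the equivalence of~\ref{lem:ideal-inclusion:item:inclusion} and~\ref{lem:ideal-inclusion:item:approx} it suffices to show that $f\land(tg)\to f$ in $\Cont(K)$ as $t\to\infty$. Invoking the vector lattice identity $f-f\land(tg)=(f-tg)^+$ together with $g\ge 0$, this amounts to $\norm{(f-tg)^+}=\sup_{x\in K}\max\{f(x)-tg(x),\,0\}\to 0$. Here I would fix $\varepsilon>0$, consider the compact super-level set $A_\varepsilon:=\{x\in K:\ f(x)\ge\varepsilon\}$ (the case $A_\varepsilon=\emptyset$ being immediate, since then $(f-tg)^+\le f^+<\varepsilon$ pointwise), note that $A_\varepsilon\subseteq\opensupp(f)\subseteq\opensupp(g)$ so that $g>0$ on $A_\varepsilon$, and extract by continuity and compactness the uniform bound $m:=\min_{x\in A_\varepsilon}g(x)>0$. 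Then for $x\notin A_\varepsilon$ one has $\max\{f(x)-tg(x),0\}\le f(x)<\varepsilon$, and for $x\in A_\varepsilon$ and $t\ge\norm{f}/m$ one has $f(x)-tg(x)\le\norm{f}-tm\le 0$; hence $\norm{f-f\land(tg)}\le\varepsilon$ once $t$ is large enough.

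I expect the only step requiring real work to be the extraction of the uniform lower bound $m>0$ for $g$ on $\{f\ge\varepsilon\}$ — this is exactly where compactness of $K$ is indispensable, and it is precisely the phenomenon that fails for the ordinary (closed) support, as the functions $u$ and $\one$ from the discussion preceding Example~\ref{ex:weakly-expanding} illustrate.
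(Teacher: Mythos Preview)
Your proof is correct. For the forward implication you take a slightly different route than the paper: rather than invoking the characterization $f\land(tg)\to f$ from Lemma~\ref{lem:ideal-inclusion} and reading off the pointwise conclusion, you argue directly via the closed ideal $J_Z$ of functions vanishing on the zero set of $g$; this is arguably cleaner and does not rely on Lemma~\ref{lem:ideal-inclusion} at all. For the converse, both you and the paper reduce to showing $f\land(tg)\to f$ uniformly via Lemma~\ref{lem:ideal-inclusion}; the paper packages the compactness step as an application of Dini's theorem (the net $\big(f\land(tg)\big)_t$ is increasing and converges pointwise to the continuous function $f$, hence uniformly), whereas you spell out essentially the same argument by hand on the compact super-level sets $A_\varepsilon$. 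The two approaches are equivalent in content; yours is more self-contained, the paper's is shorter by delegating to Dini.
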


\begin{proof}
	\rightProof
	According to Lemma~\ref{lem:ideal-inclusion} we have $f \land (tg) \to f$ in norm as $t \to \infty$.
	Let $x \in \opensupp(f)$. 
	Then $f(x) > 0$, so we conclude from $f(x) \land (tg(x)) \to f(x)$ as $t \to \infty$ that $g(x) > 0$,
	i.e., $x \in \opensupp(g)$.
	
	\leftProof
	Consider the increasing net $\big(f \land (tg)\big)_{t \in (0,\infty)}$ in $\Cont(K)$. 
	It follows from the inclusion $\opensupp(f) \subseteq \opensupp(g)$ 
	that this net converges pointwise to the continuous function $f$.
	Due to Dini's theorem the convergence is automatically uniform, 
	so Lemma~\ref{lem:ideal-inclusion} implies that $\overline{E_f} \subseteq \overline{E_g}$.
\end{proof}

Very easy examples (for instance on the Euclidean space $\bbR$) 
show that Lemma~\ref{lem:ideal-inclusion-c-k} does not remain true 
if we replace the compact space $K$ with, for instance, a locally compact space.

We close this subsection by showing that Theorem~\ref{thm:main-everywhere-bl} 
can be interpreted as a strong generalization of the following result 
of Nagler \cite[Theorem~1]{Nagler2015} for finite rank operators.
For a vector $x$ in a Banach space $E$ and a functional $\alpha \in E'$
we use the notation $x \otimes \varphi$ for the linear operator $E \to E$ given by
\begin{align*}
	(x \otimes \alpha) f := \langle \alpha, f \rangle x
\end{align*}
for all $f \in E$.

\begin{example}[Nagler]
	\label{exa:nagler}
	Let $E$ be a complex Banach lattice, 
	let $e_1, \dots, e_n \in E_+$ and $\alpha_1, \dots \alpha_n \in E'_+$, 
	and set $e := e_1 + \dots + e_n$.
	Assume that $\langle \alpha_j, e \rangle = 1$ and $\langle \alpha_j, e_j \rangle > 0$ 
	for each $j \in \{1, \dots, n\}$.
	Then the finite rank operator 
	\begin{align*}
		T 
		:= 
		e_1 \otimes \alpha_1 + \dots e_n \otimes \alpha_n
		: 
		E \to E
	\end{align*}
	is power bounded, has $e$ as a fixed point, 
	and satisfies $\specPnt(T) \cap \bbT \subseteq \{1\}$.
\end{example}

\begin{proof}
	One readily checks that $Te = e$. 
	Thus, the orbits of $e_1, \dots, e_n$ under the powers of $T$ are contained in $[0,e]$;
	and since $e_1, \dots, e_n$ span the range of $T$ it follows that $T$ is power bounded. 
	
	As $T$ has finite-rank, we conclude in particular that $T$ is weakly almost periodic. 
	In order to conclude $\specPnt(T) \cap \bbT \subseteq \{1\}$ 
	we show that the assumption $\overline{E_{T^nf}} \supseteq \overline{E_{T^{n-1}f}}$ 
	in Theorem~\ref{thm:main-everywhere-bl}
	is satisfied for every $f \in E_+$ for $n=2$.
	Fix $f \in E_+$ and let
	\begin{align*}
		A := \big\{j \in \{1, \dots, n\} : \; \langle \alpha_j, f \rangle > 0 \big\}.
	\end{align*}
	With the notation $e_A := \sum_{j \in A} e_j$
	it follows that $c e_A \le Tf \le d e_A$ for some numbers $c,d > 0$, 
	so $E_{Tf} = E_{e_A}$.
	By letting $a > 0$ be the smallest of the numbers 
	$\langle \alpha_1, e_1 \rangle, \dots, \langle \alpha_n, e_n \rangle$, we finally get
	\begin{align*}
		T^2 f 
		\ge 
		c T e_A 
		\ge 
		c \sum_{j \in A} Te_j 
		\ge 
		ca \sum_{j \in A} e_j 
		= 
		ca e_A,
	\end{align*}
	so $E_{T^2f} \supseteq E_{e_A} = E_{Tf}$, 
	and hence $\overline{E_{T^2f}} \supseteq \overline{E_{Tf}}$.
\end{proof}

Nagler considered Banach function spaces $E$ in \cite[Theorem~1]{Nagler2015} 
and assumed $e$ to be the constant function with value $1$, 
but this assumption is not essential for the result.
(In \cite[Section~1.2]{Nagler2015} 
the assumption $\norm{\alpha_j} = 1$ for each $j \in \{1, \dots, n\}$ is also listed; 
but this assumption is, in conjunction with the assumption that $\langle \alpha_j, e \rangle = 1$
for the constant $1$-function $e$, 
rather restrictive, and is not needed for the result to hold.)

\begin{remark}
	It is illuminating to compare Example~\ref{exa:nagler} 
	to the counterexample in~\ref{ex:weakly-expanding}. 
	The operator $T$ in Example~\ref{ex:weakly-expanding} 
	can also be written in the form
	\begin{align*}
		T = e_1 \otimes \alpha_1 + e_2 \alpha_2 + e_3 \otimes \alpha_3,
	\end{align*}
	where $e_1 = u$, $e_2 = v$, $e_3 = w$ (and thus $e = e_1+e_2+e_3 = \one$), 
	and where $\alpha_1$ is the integration against $1/2$ times the Lebesgue measure on $[-1,1]$, 
	$\alpha_2$ is the point evaluation at $1$, and $\alpha_3$ is the point evaluation at $-1$.
	
	One also has $\langle \alpha_j, e \rangle = 1$ in this example; 
	the reason why one is not in the situation of Example~\ref{exa:nagler} 
	is that $\langle \alpha_2, e_2 \rangle = v(1) = 0$ 
	and $\langle \alpha_3, e_3 \rangle = w(-1) = 0$.
\end{remark}

\subsection{Long-term behaviour}
\label{subsection:primitivity-1:asymptotics}

We now use the results of the previous subsection 
to analyze the behaviour of the powers $T^n$ as $n \to \infty$.

A bounded linear operator $T$ on a Banach lattice $E$ is called \emph{AM-compact} 
if it maps every order interval to a relatively compact set.
Obviously, every compact operator is AM-compact. 
Other examples of AM-compact operators are discussed after the following theorem.

\begin{theorem} 
	\label{thm:everywhere-convergence}
	Let $T: E \to E$ be a power bounded linear operator on a complex Banach lattice $E$,
	and assume that the fixed space $\ker(1-T)$ contains a quasi-interior point $h$ of $E_+$.
	Assume moreover that the operator $T^{n_0}$ is AM-compact for some integer $n_0 \ge 1$ 
	and that, for every $f \in E_+$, there exists an integer $n \ge 1$ 
	such that $\overline{E_{T^nf}} \supseteq \overline{E_{T^{n-1}f}}$. 
	Then $T^k$ converges strongly as $k \to \infty$.
\end{theorem}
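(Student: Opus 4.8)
The plan is to combine three ingredients: the aperiodicity result from Theorem~\ref{thm:main-everywhere-bl}, the fact that $T$ is mean ergodic (so that the fixed space is complemented by the closure of the range of $1-T$), and a compactness argument using AM-compactness to upgrade weak convergence on the ``stable'' part to strong convergence.

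First I would argue that $T$ is mean ergodic. Since $h$ is a quasi-interior point of $E_+$ and is fixed by $T$, the principal ideal $E_h$ is dense and $T$-invariant, and every orbit of a point in $[0,h]$ stays in $[0,h]$; one has to check that $[0,h]$ is relatively weakly compact. This is where AM-compactness enters: $T^{n_0}[0,h]$ is relatively compact, hence so is its closed convex hull, and power boundedness together with a Cesàro-averaging argument shows the Cesàro means $\frac1k\sum_{j=0}^{k-1}T^j f$ form a relatively (weakly) compact set for $f \in [0,h]$; density of $\linSpan[0,h]$ in $F:=\overline{E_h}=E$ and power boundedness then give mean ergodicity of $T$ on all of $E$ (as in \cite[Ch.~2]{Krengel1985} or \cite[Sec.~2.4]{Krengel1985}). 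In fact, combined with AM-compactness of a power, one gets that $T$ restricted to $F$ is weakly almost periodic (the same footnote-style argument as in the proof of Corollary~\ref{cor:main-everywhere-bl-kb}, replacing weak compactness of order intervals by relative compactness of $T^{n_0}[0,h]$ and taking Cesàro averages). Consequently the Jacobs--de Leeuw--Glicksberg decomposition applies: $E = E_{\mathrm{rev}} \oplus E_{\mathrm{fl}}$, where $E_{\mathrm{rev}}$ is the closed span of the unimodular eigenvectors and $E_{\mathrm{fl}}$ is the ``flight vector'' part on which the orbits have $0$ in their weak closure.

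Next, by Theorem~\ref{thm:main-everywhere-bl} (whose hypotheses are satisfied, $T\restricted{E}$ being weakly almost periodic and having the support-increasing property) we know $\specPnt(T)\cap\bbT\subseteq\{1\}$, so the reversible part $E_{\mathrm{rev}}$ is exactly the fixed space $\ker(1-T)$ and the associated projection is the mean ergodic projection $P$. On $E_{\mathrm{rev}}$ the powers $T^k$ are constant (equal to $P$), so strong convergence there is trivial. It remains to show $T^k f \to 0$ in norm for every $f \in E_{\mathrm{fl}}$. For such $f$ we already know $0$ lies in the weak closure of the orbit; since $T$ is a lattice operation-compatible positive operator, $|T^k f| \le T^k |f|$, and $|f| \in E_{\mathrm{fl}}$ as well (the flight part is a sublattice-closed ideal here because it is $T$-invariant and complemented by a lattice projection), so it suffices to treat $0 \le f \in E_{\mathrm{fl}}$. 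Here I would use AM-compactness: first reduce to $f \in [0,h]$ by density and power-boundedness, then note $\{T^{k}f : k \ge n_0\} \subseteq T^{n_0}[0, c\,h]$ is relatively norm-compact, so the orbit of $f$ is relatively norm compact; a relatively norm-compact orbit whose weak closure contains $0$ must actually converge to $0$ in norm (any norm-limit point is, by the JdLG structure, a flight vector lying in a minimal orbit closure, which forces it to be $0$). Hence $T^k f \to 0$ strongly for all $f \in E_{\mathrm{fl}}$, and combining with the $E_{\mathrm{rev}}$ part gives $T^k \to P$ strongly on $E$.

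The main obstacle I anticipate is the passage from AM-compactness of a single power $T^{n_0}$ to relative norm-compactness of full orbits $\{T^k f\}$ and of the Cesàro averages: AM-compactness only controls images of order intervals, so one must carefully exploit that $[0,h]$ is $T$-invariant, that every $f\in E$ is approximated by elements of $E_h$ (dominated by multiples of $h$), and that $T$ is power bounded, to funnel all the relevant vectors through a fixed order interval before applying $T^{n_0}$. Getting the ``weak closure contains $0$ $+$ norm-relatively-compact orbit $\Rightarrow$ norm convergence to $0$'' step cleanly also requires invoking the de Leeuw--Glicksberg theory (the orbit closure of a flight vector, in the relevant operator-semigroup compactification, contains $0$ as its unique minimal idempotent image), rather than a naive compactness argument; this is routine given the references but is the conceptual heart of the final step.
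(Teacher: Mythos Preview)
Your approach is essentially the same as the paper's: establish relative compactness of orbits via AM-compactness and the quasi-interior fixed point, apply Theorem~\ref{thm:main-everywhere-bl} for aperiodicity, then conclude strong convergence. The paper's execution is cleaner in two respects. First, it obtains relative \emph{norm} compactness of all orbits directly: since $[0,h]$ is $T$-invariant, $\{T^k f : k \ge n_0\} \subseteq T^{n_0}[0,h]$ is relatively norm compact for $f \in [0,h]$, and this extends to all of $E$ by density of $E_h$ and power boundedness. Your detour through mean ergodicity and Ces\`aro averages is unnecessary. Second, the paper outsources the final convergence step to \cite[Theorem~6.1(c)]{GlueckHaase2019} rather than unpacking the Jacobs--de Leeuw--Glicksberg argument by hand.

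One step in your argument is wrong as written: the claim that $\modulus{f} \in E_{\mathrm{fl}}$ whenever $f \in E_{\mathrm{fl}}$, justified by ``the flight part is a sublattice-closed ideal,'' does not hold in general. The JdLG projection $P$ is positive, but $\ker P$ need not be a sublattice; positivity of $P$ only gives $P\modulus{f} \ge \modulus{Pf} = 0$, not $P\modulus{f} = 0$. Fortunately you do not need this reduction at all. Once you know every orbit is relatively norm compact, the set of strong-operator limit points of $(T^n)_{n \ge 0}$ is precisely the minimal ideal $\mathcal{K}$ of the compact semigroup $\overline{\{T^n : n \ge 0\}}$, and $\specPnt(T) \cap \bbT \subseteq \{1\}$ forces $\mathcal{K} = \{P\}$; hence $T^n f \to Pf$ for \emph{every} $f \in E$, positive or not. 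This is what your final parenthetical remark is gesturing at, and it is the correct way to finish without any lattice-structure claim on $E_{\mathrm{fl}}$.
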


\begin{proof}
	Since the order interval $[0,h]$ is invariant under $T$, 
	it follows from the AM-compactness of $T^{n_0}$ that the orbit of each point $f \in [0,h]$
	under $\{T^n: \; n \ge 0\}$ is relatively compact in $E$;
	thus, the same is true for every $f$ in the span $E_h$ of $[0,h]$.
	
	Since $E_h$ is dense in $E$ and $T$ is power bounded, 
	it even follows that the orbit of every $f \in E$ is relatively compact 
	\cite[Corolary~A.5(a)]{Engel2000}.
	In particular, each orbit is relatively weakly compact, 
	i.e., the operator $T$ is weakly almost periodic.
	So Theorem~\ref{thm:main-everywhere-bl} implies that $\specPnt(T) \cap \bbT \subseteq \{1\}$.
	
	Also due to the AM-compactness of $T^{n_0}$ and the existence of the quasi-interior fixed point, 
	\cite[Theorem~6.1]{GlueckHaase2019} is applicable.
	Part~(c) of this result tells us that $T^n$ converges strongly as $n \to \infty$ 
	since $T$ does not have eigenvalues in $\bbT \setminus \{1\}$.
\end{proof}

Next we state a more explicit version of the previous theorem on $L^p$-spaces. 
Let $(\Omega,\mu)$ be a $\sigma$-finite measure space and $p \in [1,\infty)$; 
we consider the (complex-valued) space $L^p := L^p(\Omega,\mu)$. 
A positive linear operator $T: L^p \to L^p$ is called an \emph{integral operator} 
or \emph{kernel operator} if there exists a measurable function $k: \Omega \times \Omega \to [0,\infty)$ 
-- the so-called \emph{integral kernel} of $T$ -- 
such that the following property holds for every $f \in L^p$:
for almost every $\omega \in \Omega$, the function $k(\omega,\argument)f$ is an $L^1(\Omega,\mu)$ and one has
\begin{align*}
	(Tf)(\omega) = \int_\Omega k(\omega, \tilde \omega) f(\tilde \omega) \dx \tilde \omega.
\end{align*}
Every positive integral operator is AM-compact; 
see for instance \cite[Appendix~A]{GerlachGlueck2019} or \cite[Appendix~A]{GlueckHaase2019} for more details.

\begin{corollary} 
	\label{cor:everywhere-convergence-l-p}
	Let $p \in [1,\infty)$, let $(\Omega,\mu)$ be a $\sigma$-finite measure space 
	and set $L^p := L^p(\Omega,\mu)$. 
	Let the positive linear operator $T: L^p \to L^p$ be an integral operator 
	whose integral kernel $k: \Omega \times \Omega \to [0,\infty)$ has the following property: 
	for every measurable set $M \subseteq \Omega$ with non-zero measure we have 
	$\int_{M \times M} k \dx \mu \otimes \mu > 0$.
	
	If $T$ is power bounded and has a fixed point which is strictly positive almost everywhere on $\Omega$, 
	then $T^k$ converges strongly as $k \to \infty$.
\end{corollary}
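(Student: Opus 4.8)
The plan is to deduce the corollary from Theorem~\ref{thm:everywhere-convergence} applied to the Banach lattice $E := L^p$. Three of the four hypotheses of that theorem are essentially immediate for our $T$: the space $E$ is a complex Banach lattice and $T$ is power bounded by assumption; the fixed space $\ker(1-T)$ contains, by hypothesis, a function that is strictly positive almost everywhere, and — as recalled in the introduction — such a function is precisely a quasi-interior point of $E_+ = L^p_+$ when $(\Omega,\mu)$ is $\sigma$-finite and $p \in [1,\infty)$; finally, as noted just before the statement of the corollary, every positive integral operator is AM-compact, so $T^{n_0}$ is AM-compact with $n_0 = 1$. It therefore only remains to verify the support-expansion condition, and in fact I expect to establish its strongest form, namely $\overline{E_{Tf}} \supseteq \overline{E_f}$ for every $f \in E_+$, i.e.\ the case $n = 1$ uniformly in $f$.

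Since, in the $L^p$-setting over a $\sigma$-finite measure space, the inclusion $\overline{E_{Tf}} \supseteq \overline{E_f}$ is equivalent to $\supp(Tf) \supseteq \supp(f)$, it suffices to prove the latter for each $0 \le f \in L^p$. I would argue by contradiction: if $\supp(Tf) \not\supseteq \supp(f)$, then there is a measurable set $M$ of non-zero measure with $M \subseteq \supp(f)$ (so $f > 0$ almost everywhere on $M$) and $Tf = 0$ almost everywhere on $M$. For almost every $\omega \in M$ one then has $\int_\Omega k(\omega,\tilde\omega) f(\tilde\omega) \dx \tilde\omega = 0$, hence $k(\omega,\tilde\omega) f(\tilde\omega) = 0$ for almost every $\tilde\omega \in \Omega$; since $f > 0$ almost everywhere on $M$, this forces $k(\omega,\tilde\omega) = 0$ for almost every $\tilde\omega \in M$. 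Integrating over $\omega \in M$ and invoking Tonelli's theorem yields $\int_{M \times M} k \dx \mu \otimes \mu = 0$, which contradicts the assumption on the kernel $k$. Thus $\supp(Tf) \supseteq \supp(f)$ for every $f \in E_+$, which gives the required ideal inclusion.

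With all four hypotheses of Theorem~\ref{thm:everywhere-convergence} verified, that theorem directly yields strong convergence of $T^k$ as $k \to \infty$, which is the assertion of the corollary. The only step that is not essentially bookkeeping is the contradiction argument in the second paragraph, which converts the integral-geometric positivity of $k$ on the ``diagonal squares'' $M \times M$ into the lattice-theoretic support-expansion property; everything else is a direct appeal to facts already recorded in the excerpt (the characterisation of quasi-interior points in $L^p$, the AM-compactness of positive integral operators, and the equivalence between ideal inclusions and support inclusions).
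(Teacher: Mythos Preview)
Your proposal is correct and follows essentially the same route as the paper: both proofs reduce to Theorem~\ref{thm:everywhere-convergence} by noting that positive integral operators are AM-compact and that the strictly positive fixed point is a quasi-interior point, and both verify the support-expansion condition $\supp(Tf)\supseteq\supp(f)$ by showing that a failure would produce a measurable set $M$ of positive measure on which $\int_{M\times M}k\,\mathrm d\mu\otimes\mu=0$. The only cosmetic difference is that the paper argues directly by setting $M:=S_f\setminus S_g$ (with $S_f=\{\omega:f(\omega)>0\}$, $S_g=\{\omega:(Tf)(\omega)>0\}$), whereas you frame the same step as a proof by contradiction; the underlying computation is identical.
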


\begin{proof}
	As pointed out before the corollary, $T$ is AM-compact.
	So, in order to apply Theorem~\ref{thm:everywhere-convergence}, 
	it suffices to check that $\supp Tf \supseteq \supp f$ 
	(in the almost everywhere sense) for each $0 \le f \in L^p$.
	
	Fix $0 \le f \in L^p$
	(more precisely, let $f$ be a representative of a positive vector in $L^p$; 
	we may choose $f$ such that $f(\omega) \ge 0$ for all $\omega \in \Omega$) 
	and let $g = Tf$. 
	We have
	\begin{align*}
		g(\omega) = \int_\Omega k(\omega, \tilde \omega) f(\tilde \omega) \dx \mu(\tilde \omega)
	\end{align*}
	for almost all $\omega \in \Omega$;
	we may even modify $k$ and $g$ on a null set of $\Omega \times \Omega$ and $\Omega$, respectively, 
	such that this formula holds for all $\omega \in \Omega$ (where the modification of $k$ might depend on $f$). 	
	Set 
	\begin{align*}
		S_f := \{\omega \in \Omega: \; f(\omega) > 0\}
		\qquad \text{and} \qquad 
		S_g := \{\omega \in \Omega: \; g(\omega) > 0\}.
	\end{align*}
	Since $\int_{\Omega \setminus S_g} g(\omega)\dx \mu(\omega) = 0$, 
	we conclude that $k(\omega, \tilde \omega) f(\tilde \omega) = 0$ 
	for almost all $(\omega, \tilde \omega) \in (\Omega \setminus S_g) \times \Omega $. 
	Hence, $k(\omega, \tilde \omega) = 0$ for almost all $(\omega,\tilde \omega) \in (\Omega \setminus S_g) \times S_f$. 
	In particular, we have $k(\omega,\tilde \omega) = 0$ 
	for almost all $(\omega, \tilde \omega) \in (S_f \setminus S_g) \times (S_f \setminus S_g)$.
	
	By our assumption on $k$ this implies that $S_f \setminus S_g$ has measure $0$, 
	so we indeed have $\supp Tf = \supp g \supseteq \supp f$.
\end{proof}

The condition on the integral kernel $k$ in Corollary~\ref{cor:everywhere-convergence-l-p} means, 
in a sense, that $k$ is non-zero close to the diagonal of $\Omega \times \Omega$. 
We make this more explicit in the following version of the corollary which is stated in terms of topological spaces. 
Recall that a topological space $\Omega$ is called \emph{Lindelöf} 
if every open cover of $\Omega$ admits an at most countable subcover. 
Of course, every compact topological space is Lindelöf; 
moreover, a metric space is Lindelöf if and only if it is separable. 
Hence, the following corollary is applicable to a large variety of spaces.

\begin{corollary} \label{cor:lindeloef}
	Let $p \in [1,\infty)$ and let $\Omega$ be a topological space which is Lindelöf. 
	Let $\mu $ be a $\sigma$-finite measure (with values in $[0,\infty)$) 
	defined on the Borel $\sigma$-algebra on $\Omega$ and set $L^p := L^p(\Omega,\mu)$.
	
	Let $T: L^p \to L^p$ be a positive linear operator which is an integral operator 
	and whose integral kernel $k: \Omega \times \Omega \to [0,\infty)$ has the following property: 
	there exists an open set $U \subseteq \Omega \times \Omega$ which contains the diagonal 
	$\{(\omega,\omega) \in \Omega \times \Omega: \; \omega \in \Omega\}$ 
	such that $k(\omega, \tilde \omega) > 0$ for almost all $(\omega, \tilde \omega) \in U$.
	
	If $T$ is power bounded and has a fixed point which is strictly positive almost everywhere on $\Omega$, then $T^k$ converges strongly as $k \to \infty$.
\end{corollary}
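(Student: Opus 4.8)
The plan is to reduce the statement to Corollary~\ref{cor:everywhere-convergence-l-p}: all of its hypotheses are already part of the present assumptions except for the positivity condition on the kernel, namely that $\int_{M \times M} k \dx \mu \otimes \mu > 0$ for every measurable $M \subseteq \Omega$ with $\mu(M) > 0$. (Recall that, over a $\sigma$-finite measure space and for $p \in [1,\infty)$, a fixed point that is strictly positive almost everywhere is precisely a quasi-interior fixed point of $E_+$, so the hypothesis on the fixed space matches.) Once the kernel condition is verified, the conclusion follows immediately.

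First I would combine the open set $U$ with the Lindel\"of property to build a countable cover of the diagonal by open squares. Fix $\omega \in \Omega$. Since $(\omega,\omega)$ lies in the open set $U$, there is a basic open box $A \times B \subseteq U$ containing $(\omega,\omega)$; setting $V_\omega := A \cap B$ gives an open neighbourhood of $\omega$ with $V_\omega \times V_\omega \subseteq U$. The family $(V_\omega)_{\omega \in \Omega}$ covers $\Omega$, so by the Lindel\"of property there is a countable subcover $(V_{\omega_j})_{j \in \bbN}$, and $V_{\omega_j} \times V_{\omega_j} \subseteq U$ for each $j$.

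Next I would verify the kernel condition. Let $M$ be measurable with $\mu(M) > 0$. Then $M = \bigcup_{j} (M \cap V_{\omega_j})$, so by countable subadditivity of $\mu$ there is an index $j$ with $\mu(N) > 0$, where $N := M \cap V_{\omega_j}$. We have $N \times N \subseteq V_{\omega_j} \times V_{\omega_j} \subseteq U$, and $(\mu \otimes \mu)(N \times N) = \mu(N)^2 > 0$. Since $k > 0$ almost everywhere on $U$, we get $k > 0$ almost everywhere on the measurable subset $N \times N$ of positive measure, hence $\int_{N \times N} k \dx \mu \otimes \mu > 0$; as $k \ge 0$ and $N \times N \subseteq M \times M$, this yields $\int_{M \times M} k \dx \mu \otimes \mu > 0$, as needed. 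Corollary~\ref{cor:everywhere-convergence-l-p} then gives strong convergence of $T^k$ as $k \to \infty$.

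The argument is short, and the only genuinely topological input is the step from a diagonal point of $U$ to an open square inside $U$, where the product-topology structure is used, together with the Lindel\"of hypothesis which turns the resulting cover into a countable one so that subadditivity of $\mu$ applies. A minor point worth stating carefully is the meaning of ``$k > 0$ almost everywhere on $U$'' when $U$ need not be product-measurable: it suffices that $\{(\omega,\tilde\omega) \in U : \; k(\omega,\tilde\omega) = 0\}$ be contained in some $\mu \otimes \mu$-null set, which then also covers the relevant part of $N \times N$, so no difficulty arises.
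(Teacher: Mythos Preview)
Your proof is correct and follows essentially the same route as the paper: the paper factors the construction of the countable cover $(V_{\omega_j})$ with $V_{\omega_j}\times V_{\omega_j}\subseteq U$ into a separate lemma (Lemma~\ref{lem:lindeloef-diagonal}), but otherwise the argument---pick $j$ with $\mu(M\cap V_{\omega_j})>0$, then integrate $k$ over the resulting square---is the same. Your remark on the interpretation of ``$k>0$ a.e.\ on $U$'' when $U$ need not be product-measurable is a nice point of care that the paper does not spell out.
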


Note that we do not assume that measure $\mu$ in the corollary to be strictly positive 
in the sense that $\mu(V) > 0$ for every non-empty open set $V$.
For the proof we need a little lemma about Lindelöf spaces:

\begin{lemma} 
	\label{lem:lindeloef-diagonal}
	Let $\Omega$ be a topological space which is Lindelöf 
	and let $U \subseteq \Omega \times \Omega$ be an open set 
	which contains the diagonal $\Delta := \{(\omega,\omega) \in \Omega \times \Omega: \; \omega \in \Omega\}$.
	
	Then there exists an at most countable system $\calB$ of open sets $B \subseteq \Omega$ 
	such that $\calB$ covers $\Omega$ and such that $B \times B \subseteq U$ for all $B \in \calB$.
\end{lemma}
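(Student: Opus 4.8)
The plan is to argue pointwise along the diagonal and then invoke the Lindelöf property to pass to a countable subcover. The only subtlety is that we want a single open set $B$ with $B \times B \subseteq U$, so we must symmetrize the basic product neighbourhoods obtained from the definition of the product topology.

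Concretely, I would proceed as follows. Fix $\omega \in \Omega$. Since $(\omega,\omega) \in \Delta \subseteq U$ and $U$ is open in the product topology on $\Omega \times \Omega$, there exist open sets $V_\omega, W_\omega \subseteq \Omega$ with $\omega \in V_\omega$, $\omega \in W_\omega$, and $V_\omega \times W_\omega \subseteq U$. Set $B_\omega := V_\omega \cap W_\omega$; this is an open subset of $\Omega$ containing $\omega$, and
\begin{align*}
	B_\omega \times B_\omega \subseteq V_\omega \times W_\omega \subseteq U.
\end{align*}
Thus $\{B_\omega : \omega \in \Omega\}$ is an open cover of $\Omega$ all of whose members $B$ satisfy $B \times B \subseteq U$.

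Now I would apply the Lindelöf hypothesis to extract an at most countable subfamily $(B_{\omega_n})_{n}$ that still covers $\Omega$, and take $\calB := \{B_{\omega_n} : n\}$. By construction $\calB$ is at most countable, covers $\Omega$, and every $B \in \calB$ satisfies $B \times B \subseteq U$, which is exactly what is claimed.

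There is no real obstacle here: the argument is a direct consequence of the definition of the product topology (to get the symmetrized basic neighbourhoods $B_\omega$) together with the definition of a Lindelöf space. The one point worth stating explicitly in the write-up is the intersection step $B_\omega = V_\omega \cap W_\omega$, since without it one would only obtain $V_\omega \times W_\omega \subseteq U$ with possibly different factors, which is not what later arguments need.
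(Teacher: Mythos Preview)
Your proof is correct and follows essentially the same approach as the paper: construct for each $\omega$ an open neighbourhood $B_\omega$ with $B_\omega \times B_\omega \subseteq U$, then extract a countable subcover by the Lindel\"of property. The only difference is that you spell out the symmetrization step $B_\omega = V_\omega \cap W_\omega$ explicitly, whereas the paper simply asserts the existence of such a $B_\omega$ directly from $U$ being an open neighbourhood of $(\omega,\omega)$.
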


\begin{proof}
	Let $\omega \in \Omega$. 
	Since $U$ is an open neighbourhood of $(\omega,\omega)$ in $\Omega \times \Omega$, 
	we can find an open neighbourhood $B_\omega$ of $\omega$ in $\Omega$ 
	such that $B_\omega \times B_\omega \subseteq U$.
	
	We have $\bigcup_{\omega \in \Omega} B_\omega = \Omega$; 
	using that $\Omega$ is Lindelöf, 
	we can thus find an at most countable subsystem $\calB$ of $\{B_\omega: \; \omega \in \Omega\}$ 
	which also covers $\Omega$. 
	This proves the assertion.
\end{proof}

\begin{proof}[Proof of Corollary~\ref{cor:lindeloef}]
	We only have to check that $k$ satisfies 
	the assumption stated in Corollary~\ref{cor:everywhere-convergence-l-p}, 
	so let $M \subseteq \Omega$ be a measurable set of non-zero measure. 
	
	Since $\Omega$ is Lindelöf, 
	Lemma~\ref{lem:lindeloef-diagonal} shows that we can find an at most countable open cover $\calB$ of $\Omega$ 
	such that $B \times B \subseteq U$ for each $B \in \calB$.
	There exists a set $B \in \calB$ such that $B \cap M$ has non-zero measure. 
	For this set $B$, the square $S := (B \cap M) \times (B \cap M)$ has non-zero measure in $\Omega \times \Omega$ 
	and it is contained in $U$, so $k$ is strictly positive almost everywhere on $S$. 
	Thus,
	\begin{align*}
		\int_{M \times M} k \dx \mu \otimes \mu \ge \int_S k \dx \mu \otimes \mu > 0,
	\end{align*}
	so the assumptions of Corollary~\ref{cor:everywhere-convergence-l-p} are indeed satisfied.
\end{proof}

Let us illustrate Corollary~\ref{cor:lindeloef} with the following simple example:

\begin{example} 
	\label{ex:integral-operator-on-unit-interval}
	Endow the unit interval $[0,1]$ with the Borel-$\sigma$-algebra and the Lebesgue measure 
	and let $k: [0,1] \times [0,1] \to [0,\infty)$ be a measurable function such that 
	\begin{align*}
		& 
		\int_{[0,1]} k(x,y) \dx x = 1 
		\quad \text{for almost all } y \in [0,1] 
		\\
		\text{and} \qquad 
		& 
		\int_{[0,1]} k(x,y) \dx y = 1 
		\quad \text{for almost all } x \in [0,1].
	\end{align*}
	Moreover, let $\delta > 0$ and assume that $k$ is strictly positive almost everywhere 
	on the diagonal strip $\{(x,y) \in [0,1]^2: \; \modulus{x-y} < \delta\}$. 
	For each $0 \le f \in L^1 := L^1([0,1])$ we have
	\begin{align*}
		\int_{[0,1]} \int_{[0,1]} k(x,y) f(x) \dx x \dx y = \norm{f}_{L^1}.
	\end{align*}
	Thus, $k(x,\argument)f(\argument)$ is in $L^1$ for almost all $x \in [0,1]$, 
	the function $\int_{[0,1]} k(\argument,y)f(y)\dx y$ is again in $L^1$, 
	and its norm equals the norm of $f$. 
	Hence,
	\begin{align*}
		L^1 \ni f \mapsto Tf := \int_{[0,1]} k(\argument,y) f(y) \dx y \in L^1
	\end{align*}
	defines a positive linear operator $T: L^1 \to L^1$ of norm $1$. 
	Moreover, one readily checks that $T\one_{[0,1]} = \one_{[0,1]}$. 
	Corollary~\ref{cor:lindeloef} thus implies that $T^k$ converges strongly as $k \to \infty$.
\end{example}

We also have the following consequence of 
Theorem~\ref{thm:everywhere-convergence} on sequence spaces:

\begin{corollary} 
	\label{cor:convergence-small-l-p}
	Let $p \in [1,\infty)$ and set $\ell^p := \ell^p(\bbN)$. 
	Let $T: \ell^p \to \ell^p$ be a positive and power bounded operator 
	and assume that $T$ has a fixed point $f_0$ such that $f_0(\omega) > 0$ for all $\omega \in \bbN$. 
	Assume moreover that $\langle e_j, T e_j \rangle > 0$ for each canonical unit vector $e_j$.
	
	Then $T^k$ converges strongly as $k \to \infty$.
\end{corollary}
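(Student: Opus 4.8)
The plan is to view $\ell^p(\bbN)$ as an $L^p$-space over the $\sigma$-finite measure space $\bbN$ equipped with counting measure, and to invoke Theorem~\ref{thm:everywhere-convergence} once we have verified its hypotheses. Three things need checking: that some power of $T$ is AM-compact, that the fixed space contains a quasi-interior point, and that for each $f \in \ell^p_+$ there is an integer $n \ge 1$ with $\overline{E_{T^nf}} \supseteq \overline{E_{T^{n-1}f}}$. The second point is immediate: the assumed fixed point $f_0$ satisfies $f_0(\omega) > 0$ for all $\omega \in \bbN$, and in $\ell^p$ over a $\sigma$-finite (here purely atomic) measure space such a strictly positive vector is exactly a quasi-interior point of the cone, as recalled in the introduction. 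So $h := f_0 \in \ker(1-T)$ is the required quasi-interior fixed point.

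For AM-compactness I would argue as follows. Since $f_0$ is a quasi-interior fixed point, every order interval is contained in some dilate of $[0,f_0]$, so it suffices to see that $T[0,f_0]$ (equivalently, $T$ applied to a fundamental family of order intervals) is relatively compact; in fact, because $T$ is power bounded and the orbit of $f_0$ stays in $[0,f_0]$, the cleanest route is to check that $T$ itself maps bounded order intervals into relatively compact sets. On $\ell^p$ a bounded set is relatively compact iff it is ``uniformly tail-small'', i.e.\ $\sup_{g}\sum_{j \ge N}\modulus{g(j)}^p \to 0$ as $N \to \infty$. Here the key input is the existence of the strictly positive fixed point: for $0 \le g \le f_0$ one has $0 \le Tg \le Tf_0 = f_0$, so $\sum_{j \ge N}(Tg)(j)^p \le \sum_{j \ge N} f_0(j)^p \to 0$ uniformly in $g$. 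Hence $T[0,f_0]$ is relatively compact, so $T$ is AM-compact and in particular $T^{n_0}$ is AM-compact with $n_0 = 1$.

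The remaining, and main, point is the support-expansion condition, and this is where the hypothesis $\langle e_j, Te_j \rangle > 0$ enters. In the measure-algebra language of the introduction, the condition $\overline{E_{Tg}} \supseteq \overline{E_g}$ for $g \in \ell^p_+$ is just $\supp(Tg) \supseteq \supp(g)$, i.e.\ $(Tg)(j) > 0$ whenever $g(j) > 0$. Write $T$ by its matrix $t_{ij} := \langle e_i, Te_j\rangle \ge 0$; the assumption says $t_{jj} > 0$ for every $j$. Then for $0 \le g \in \ell^p$ and any $j$ with $g(j) > 0$ we have $(Tg)(j) = \sum_i t_{ji}\,g(i) \ge t_{jj}\,g(j) > 0$, so $\supp(Tg) \supseteq \supp(g)$ for every $g \in \ell^p_+$; thus the hypothesis of Theorem~\ref{thm:everywhere-convergence} holds with $n = 1$ for all $f$. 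With all three hypotheses verified, Theorem~\ref{thm:everywhere-convergence} yields that $T^k$ converges strongly as $k \to \infty$. I expect the only genuinely delicate step to be the justification of AM-compactness; once it is observed that domination by the strictly positive fixed point forces uniform tail-smallness in $\ell^p$, everything else is routine.
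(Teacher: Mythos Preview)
Your approach is the same as the paper's: verify the hypotheses of Theorem~\ref{thm:everywhere-convergence}, with the support-expansion condition coming from the strictly positive diagonal entries. That part is correct and matches the paper verbatim. However, your AM-compactness argument contains a false step: it is \emph{not} true that every order interval in $\ell^p$ lies in some dilate of $[0,f_0]$. A quasi-interior point need not be an order unit; for instance $f_0(n)=n^{-2}$ is a quasi-interior point of $\ell^2_+$, but $h(n)=n^{-1}\in\ell^2_+$ satisfies $h\not\le c f_0$ for any $c>0$. Fortunately the fix simplifies rather than complicates your argument: your tail-smallness computation uses neither $T$ nor the fixed point. For \emph{any} $h\in\ell^p_+$ and any $g\in[0,h]$ one has $\sum_{j\ge N}\modulus{g(j)}^p\le\sum_{j\ge N}h(j)^p\to 0$ uniformly in $g$, so every order interval in $\ell^p$ is already compact. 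This is exactly the paper's observation, and it shows that every bounded linear operator on $\ell^p$ is AM-compact, so the step you flagged as delicate is in fact trivial.
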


\begin{proof}
	We clearly have $\supp (Tf) \supseteq \supp f$ for each $0 \le f \in \ell^p$. 
	Moreover, it is not difficult to check that every order interval in $\ell^p$ is compact 
	and thus, every bounded linear operator on $\ell^p$ is AM-compact. 
	Hence, the assertion follows from Theorem~\ref{thm:everywhere-convergence}.
\end{proof}

Note that we can also replace $\ell^p$ with the space $c_0$ in the above corollary 
and the same conclusion remains true (with the same proof).

By now we have explored several convergence results for the powers of kernel operators 
or, more generally, AM-compact operators.
We conclude this chapter by considering the case where (a power of) $T$ only dominates a non-zero AM-compact operator.
Operators with this property 
-- and especially the special case of so-called \emph{partial integral operators} that dominate a non-zero integral operator -- 
frequently occur in the literature. 
See for instance \cite[Sections~3 and~4]{Gerlach2013} 
and \cite[Theorem~1 on p.\,247]{Rudnicki1995} for an analysis of 
spectral and asymptotic properties of a (sub)class of such operators, 
and \cite[Section~5]{GerlachGlueckKernel} for a characterization of this operator class.
A convergence result for $C_0$-semigroups on $L^1$ which contain a partial integral operator can, 
for instance, be found in \cite[Theorems~1 and~2]{Pichor2000}; 
this result turned out to have numerous applications to evolution equations in mathematical biology.

For a positive operator $T$ on a Banach lattice $E$ we call a vector $x \in E$ 
a \emph{fixed point} of $T$ if $Tx = x$.
We call a vector $x \in E_+$ a \emph{super-fixed point} of $T$ if $Tx \ge x$.

\begin{theorem} 
	\label{thm:convergence-partial-kernel}
	Let $E$ be a Banach lattice with order continuous norm 
	and let $T: E \to E$ be a power bounded positive linear operator on $E$ 
	whose fixed space $\ker(1-T)$ contains a quasi-interior fixed point of $E_+$. 
	Assume that the following assertions are fulfilled:
	\begin{enumerate}[label=\upshape(\arabic*)]
		\item 
		Every super-fixed point in $E_+$ of $T$ is actually a fixed point of $T$.
		
		\item 
		For every non-zero $0 \le g \in \fix T$ there exists an integer $n \ge 0$ 
		and an AM-compact operator $K \in \calL(E)$ such that $0 \le K \le T^n$ and $Kg \not= 0$.
		
		\item 
		For every $0 \le f \in E$ there exists an integer $n \ge 1$ 
		such that $\supp(T^nf) \supseteq \supp(T^{n-1}f)$.
	\end{enumerate}
	Then $T^k$ converges strongly as $k \to \infty$.
\end{theorem}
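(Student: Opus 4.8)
The plan is to reduce Theorem~\ref{thm:convergence-partial-kernel} to the already-established convergence machinery by first showing $\specPnt(T) \cap \bbT \subseteq \{1\}$ via Theorem~\ref{thm:main-everywhere-bl}, and then invoking a structure/convergence result for positive power-bounded operators that dominate an AM-compact operator (the analogue of \cite[Theorem~6.1]{GlueckHaase2019} used in the proof of Theorem~\ref{thm:everywhere-convergence}, but now only requiring domination of an AM-compact operator on the fixed space rather than AM-compactness of a power of $T$ itself). First I would observe that, since $E$ has order continuous norm and $T$ has a quasi-interior fixed point $h$, the order interval $[0,h]$ is weakly compact, is invariant under $T$, and its span $E_h$ is dense in $E$; combined with power boundedness this gives, via \cite[Corollary~A.5(c)]{Engel2000}, that $T$ is weakly almost periodic. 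Assumption~(3) (which in the Banach lattice formulation should be read as $\overline{E_{T^nf}} \supseteq \overline{E_{T^{n-1}f}}$) then lets me apply Theorem~\ref{thm:main-everywhere-bl} to conclude $\specPnt(T) \cap \bbT \subseteq \{1\}$.

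Next I would set up the Jacobs--de Leeuw--Glicksberg decomposition $E = E_r \oplus E_s$, where $E_r$ is the closed span of the unimodular eigenvectors and $E_s$ is the ``flight vector'' (weakly stable) part; since $\specPnt(T) \cap \bbT \subseteq \{1\}$, the reversible part $E_r$ is in fact contained in $\fix T$, and the associated projection $P$ onto $\fix T$ is positive. To get strong convergence $T^k \to P$ it then suffices to show that $T^k f \to 0$ for every $f$ in the (dense) stable part, or more precisely that the stable part contains no nonzero fixed vector and that the orbit structure forces norm convergence; this is where assumptions~(1) and~(2) enter. I would argue: if $0 \le g \in \fix T$ is nonzero, pick $n$ and $0 \le K \le T^n$ AM-compact with $Kg \neq 0$; the point of assumption~(1) (every super-fixed point is a fixed point) is to rule out the degeneracies that otherwise obstruct the Perron--Frobenius / de Leeuw--Glicksberg argument and to guarantee that the fixed space is a projection band complemented by the stable part. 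The AM-compact operator $K$ supplies the compactness needed to invoke the relevant strong-convergence criterion on the closed ideal $\overline{E_h}$, exactly as \cite[Theorem~6.1]{GlueckHaase2019} (or \cite[Theorem~6.1]{GlueckHaase2019}(c)) does under the stronger AM-compactness hypothesis.

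Concretely, I expect the cleanest route is to restrict attention to $F := \overline{E_h}$, on which $h$ is a quasi-interior fixed point; show that the hypotheses (1)--(3) descend to $T\restricted{F}$; and then apply the Tauberian-type result that a positive, power-bounded, weakly almost periodic operator with a quasi-interior fixed point, all of whose peripheral point spectrum is trivial and which on its fixed space dominates a nonzero AM-compact operator, is mean ergodic with the mean ergodic projection coinciding with the genuine limit $\lim_k T^k$ in the strong operator topology. The AM-compactness of $K$ is used to upgrade relative weak compactness of orbits in $[0,h]$ to the precompactness (in norm) that yields actual strong convergence rather than mere Cesàro convergence; assumption~(1) removes the possibility of a nontrivial band on which $T$ acts as a strict expansion, which would otherwise allow fixed vectors in the stable part or prevent the decomposition from being a band decomposition.

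The main obstacle I anticipate is the bookkeeping around assumption~(1): verifying precisely that ``every super-fixed point is a fixed point'' is the exact hypothesis needed to make the stable part and the fixed space complementary (a projection band) so that $P$ is an order projection, and that this property is preserved under passing to $F = \overline{E_h}$. A secondary technical point is checking that the domination $0 \le K \le T^n$ with $K$ AM-compact, together with the triviality of $\specPnt(T) \cap \bbT$, really does force norm-relative-compactness of the orbit of $h$ (hence of all of $E$ by density and power boundedness) — this likely requires combining the de Leeuw--Glicksberg splitting with an AM-compactness argument applied to $K$ acting on the weakly stable part, showing the stable orbits converge to $0$ in norm; I would expect to cite or adapt \cite[Theorem~6.1]{GlueckHaase2019} for this rather than reprove it.
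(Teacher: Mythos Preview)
Your proposal is essentially correct and follows the same route as the paper: establish weak almost periodicity, apply Theorem~\ref{thm:main-everywhere-bl} to get $\specPnt(T)\cap\bbT\subseteq\{1\}$, and then feed assumptions~(1) and~(2) into the machinery of \cite{GlueckHaase2019} (specifically Theorem~6.1(c), after verifying the ``standard assumptions'' via \cite[Lemma~3.10]{GerlachGlueck2019}) to conclude strong convergence. Two minor points: your detour through $F:=\overline{E_h}$ is vacuous since $h$ is already quasi-interior, so $F=E$; and the paper obtains weak almost periodicity as a byproduct of the stronger norm-relative compactness of orbits coming from \cite[Theorem~3.1(a)]{GlueckHaase2019}, whereas you get it directly from weak compactness of $[0,h]$ in an order continuous Banach lattice --- both work, and yours is arguably the more direct path to that intermediate step.
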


Sufficient conditions for every super-fixed point in $E_+$ to actually be a fixed point
can, for instance, be found in \cite[Proposition~3.11]{GerlachGlueck2019}.

\begin{proof}[Proof of Theorem~\ref{thm:convergence-partial-kernel}]
	Due to the assumptions~(1) and~(2) and the existence of a quasi-interior fixed point 
	it follows that the operator semigroup $(T^n)_{n \in \bbN_0}$ satisfies the \emph{standard assumptions} 
	in \cite[Section~6]{GlueckHaase2019};
	this is proved in \cite[Lemma~3.10]{GerlachGlueck2019}
	(the divisibility assumption in Theorem~3.9 in this reference 
	is not used in the proof of Lemma~3.10).
	
	Therefore, it follows from \cite[Theorem~3.1(a) and Remark~2.6(2)]{GlueckHaase2019} 
	that every orbit of the powers of $T$ is relatively compact in $E$; 
	in particular, every orbit is relatively weakly compact, so $T$ is weakly almost periodic.
	So we can apply Theorem~\ref{thm:main-everywhere-bl} to derive that $\specPnt(T) \cap \bbT \subseteq \{1\}$, 
	and thus the claimed convergence follows from \cite[Theorem~6.1(c)]{GlueckHaase2019}.
\end{proof}

Theorem~\ref{thm:convergence-partial-kernel} is, in a sense, 
an analogous result to Theorem~\ref{thm:everywhere-convergence}. 
Thus, one can derive similar consequences for Theorem~\ref{thm:convergence-partial-kernel} 
as we did for Theorem~\ref{thm:everywhere-convergence} above. 
However, it would probably not be particularly illuminating to repeat the same arguments again, 
so we omit the details here.

When combining our triviality results on the point spectrum 
with spectral analysis of quasi-compact operators 
(which is, for instance, presented in \cite[Theorem~2.8 on pp.\,91--92]{Krengel1985} or \cite{Nagler2018}) 
one can also obtain sufficient conditions for operator norm convergence 
of powers of positive operators.
As these arguments do not seem to add many insights to the main theme of this article though, 
we do not discuss this in detail.

\section{Irreducibe Operators that partially increase the support of functions} 
\label{section:conditions-for-primitivity-2}

\subsection{Point spectrum}
\label{subsection:conditions-for-primitivity-2:point-spectrum}

In some cases it can happen that the property of an operator to increase the support of functions 
is not satisfied everywhere on the underlying space, but only on a part of the space. 
If the operator is irreducible, we can still derive similar results as before. 
This is the purpose of this section.

A positive linear operator $T$ on a Banach lattice $E$ is called \emph{irreducible} if, 
for each non-zero $f \in E_+$ and each non-zero functional $\varphi \ge 0$ in the norm dual $E'$ of $E$, 
there exists an integer $n \ge 0$ such that $\langle \varphi, T^n f \rangle > 0$.
Equivalently, $\{0\}$ and $E$ are the only closed $T$-invariant ideals in $E$.
A vector subspace $B$ of a Banach lattice $E$ is called a \emph{projection band} 
if there exists a bounded linear projection $P$ on $E$ which has range $B$ 
and satisfies $0 \le Pf \le f$ for all $f \in E_+$.
For instance, on an $L^p$-space the multiplication with the indicator function of a measurable set 
is a band projection.

\begin{theorem}
	\label{thm:irred-bl}
	Let $E$ be a complex Banach lattice and let $T: E \to E$ be a positive linear operator 
	which is weakly almost periodic.
	Assume that $T$ is irreducible and that there exists a projection band $\{0\} \not= B \subseteq E$ 
	such that $\overline{E_{Tf}} \supseteq \overline{E_f}$ for each $0 \le f \in B$. 
	Then $\specPnt(T) \cap \bbT \subseteq \{1\}$.
\end{theorem}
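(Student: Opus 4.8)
The plan is to mimic the strategy used for Theorem~\ref{thm:main-everywhere-bl}: split off the "reversible" part of the dynamics via the Jacobs--de Leeuw--Glicksberg decomposition, reduce to a lattice isomorphism, and then exploit the support-expansion hypothesis there. Since $T$ is weakly almost periodic, the JdLG decomposition gives a positive projection $P$ commuting with $T$, with $PE$ a Banach lattice (in an equivalent norm, positive cone $E_+ \cap PE$), such that $T\restricted{PE}$ is a lattice isomorphism and $PE$ is the closed linear span of all unimodular eigenvectors of $T$. It therefore suffices to show that $T\restricted{PE}$ has no eigenvalue in $\bbT\setminus\{1\}$. The subtlety, compared with the everywhere case, is that the support-expansion hypothesis is only assumed on the projection band $B$, which need not interact nicely with $PE$; so the argument of Theorem~\ref{thm:lattice-homomorphism} cannot be applied verbatim to $T\restricted{PE}$.

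The key new ingredient is irreducibility. First I would show that $T\restricted{PE}$ is again irreducible: a closed $T\restricted{PE}$-invariant ideal $J$ of $PE$ generates, via $\overline{J + \ker P}$ or more carefully via the band structure, a closed $T$-invariant ideal of $E$, forcing $J \in \{0, PE\}$ (alternatively: the peripheral point spectrum / fixed space of an irreducible operator is one-dimensional and spanned by a quasi-interior point, so $PE$ inherits irreducibility through standard Perron--Frobenius structure, e.g.\ \cite[Section~V.5]{Schaefer1974}). Next, suppose $\lambda = e^{i\theta} \in \bbT\setminus\{1\}$ is an eigenvalue of $T\restricted{PE}$ with eigenvector $z = x + iy$. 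As in the proof of Theorem~\ref{thm:lattice-homomorphism}, cyclicity of the point spectrum of the lattice homomorphism $T\restricted{PE}$ lets me assume $\cos\theta \le 0$, $\sin\theta \ge 0$, and one derives the four estimates $T(x^+) \le x^- + y^-$, etc. (with $v = x$, $w = y$ since $n = 1$ is forced here — this is exactly why powers of $T$ are \emph{not} allowed in the hypothesis of Theorem~\ref{thm:main-irred}). The point is to now use the band $B$: via the band projection $Q$ onto $B$, I would apply these estimates to $Qx^{\pm}$, $Qy^{\pm} \in B$, where the genuine support-expansion $\overline{E_{T f}} \supseteq \overline{E_f}$ holds, and run the disjointness argument of Lemma~\ref{lem:closed-ideals}\ref{lem:closed-ideals:item:disjoint} to conclude $Q x^{\pm} = Q y^{\pm} = 0$; hence the real and imaginary parts of $z$ vanish on $B$, i.e.\ $z \in \overline{E}_{(I-Q)E}$-type complement. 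Then irreducibility kicks in: the closed ideal generated by $B$ inside $E$ is $T$-invariant (since $T$ maps into closed ideals generated by expanded supports) and nonzero, hence all of $E$; combined with $z$ being disjoint from $B$ and $T$-eigen, one forces $z = 0$, a contradiction.

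The main obstacle I expect is the bookkeeping around how the band $B \subseteq E$ relates to the subspace $PE$: the band projection $Q$ and the JdLG projection $P$ need not commute, and $B \cap PE$ could a priori be trivial, which would make the reduction vacuous. The right fix is probably to \emph{not} pass to $PE$ first for the band argument, but instead argue directly on $E$: from $\modulus{Tz} = \modulus{z}$ (using $\modulus{\lambda}=1$) and the lattice-homomorphism-like estimates on $B$ one shows the closed ideal $\overline{E_{|z|}}$ is disjoint from $B$; but irreducibility says the only closed $T$-invariant ideal containing $B$ is $E$, and a Perron--Frobenius-type argument (the eigenvector $z$ of a unimodular eigenvalue is a "weak order unit" relative to the fixed space, cf.\ the structure theory in \cite[Section~V.5]{Schaefer1974}) forces $\overline{E_{|z|}}$ to contain a quasi-interior point of the relevant ideal, contradicting disjointness from the nonzero band $B$ unless $z = 0$. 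So the core difficulty is correctly invoking the Perron--Frobenius structure of irreducible weakly almost periodic operators to say that every peripheral eigenvector has "full support" relative to the fixed ideal, which is what turns "$z$ vanishes on $B$" into "$z = 0$".
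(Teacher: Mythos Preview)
Your overall plan matches the paper's proof closely: JdLG decomposition, cyclicity to arrange $\cos\theta \le 0$ and $\sin\theta \ge 0$, the four lattice-homomorphism estimates $T(x^+) \le x^- + y^-$ etc., application of the band projection $Q$ to $x^\pm, y^\pm$, the disjointness chain via Lemma~\ref{lem:closed-ideals}\ref{lem:closed-ideals:item:disjoint} to obtain $Qx^\pm = Qy^\pm = 0$, and then a contradiction from irreducibility. There is, however, one genuine gap you have not identified, and the obstacle you do flag is a misdiagnosis.

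The real issue is not whether $B \cap PE$ is nontrivial or whether $P$ and $Q$ commute; neither is needed. The problem is that the estimates $T(x^+) \le x^- + y^-$ etc.\ come from $T\restricted{PE}$ being a lattice homomorphism \emph{on $PE$}, so the positive and negative parts are a priori computed in the intrinsic lattice structure of $PE$. But the chain $\overline{E_{Qx^+}} \subseteq \overline{E_{TQx^+}} \subseteq \overline{E_{Tx^+}} \subseteq \overline{E_{x^- + y^-}}$ and in particular the step ``$Qx^+ \le x^+$ is disjoint from $x^-$'' take place in $E$. For these to be compatible one needs the lattice operations of $PE$ to agree with those of $E$, i.e.\ $PE$ must be a \emph{sublattice} of $E$, which is not automatic for the range of a merely positive projection. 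The paper secures this as follows: from irreducibility, power boundedness, and $T\modulus{z} \ge \modulus{z}$ one obtains that $\modulus{z}$ is actually a fixed point of $T$ and a quasi-interior point of $E_+$, and that there exists a strictly positive functional $\varphi \in E'_+$ fixed by $T'$. Since $P$ lies in the weak operator closure of $\{T^n : n \ge 0\}$, $\varphi$ is also fixed by $P'$, so $P$ is strictly positive; then \cite[Proposition~III.11.5]{Schaefer1974} yields that $PE$ is a sublattice of $E$. This is precisely the step that makes the mixing of $PE$-structure and the $E$-band projection $Q$ legitimate.

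On your final step: the claim that ``the closed ideal generated by $B$ inside $E$ is $T$-invariant'' is unjustified and is not how the contradiction is obtained. The hypothesis $\overline{E_{Tf}} \supseteq \overline{E_f}$ for $f \in B$ says supports expand, possibly out of $B$, so there is no reason for $B$ (which, being a projection band, already \emph{is} a closed ideal) to be $T$-invariant. The correct contradiction is the one you gesture at in your third paragraph: from $Qx^\pm = Qy^\pm = 0$ one gets $0 \le Q\modulus{z} \le Q(\modulus{x} + \modulus{y}) = 0$; but $\modulus{z}$ is a quasi-interior point of $E_+$ (established above), so $Q\modulus{z} = 0$ forces $Q = 0$, contradicting $B \neq \{0\}$. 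No separate argument about $T$-invariance of $B$ or irreducibility of $T\restricted{PE}$ is needed.
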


\begin{proof}
	We use similar arguments as in the proofs of 
	Theorems~\ref{thm:main-everywhere-bl} and~\ref{thm:lattice-homomorphism}. 
	Since $T$ is weakly almost periodic, we can employ the Jacobs--de Leeuw--Glicksberg decomposition 
	\cite[Section~2.4]{Krengel1985}
	to obtain a bounded linear projection $P$ on $E$ 
	which has the same properties as listed in the proof of Theorem~\ref{thm:main-everywhere-bl}:
	\begin{enumerate}[label=(\alph*)]
		\item 
		The projection $P$ is positive, 
		and thus range $PE$ is a complex Banach lattice with respect to an equivalent norm,
		and with positive cone $(PE)_+ := E_+ \cap PE$.
		
		\item 
		The projection $P$ commutes with $T$, so $T$ leaves $\ker P$ and $PE$ invariant;
		moreover, the restriction $T\restricted{PE}$ is a lattice isomorphism on $PE$.
		
		\item 
		The range $PE$ is the closed span of the eigenvectors of $T$ belonging to unimodular eigenvalues. 
	\end{enumerate}
	Assume now that the assertion of the theorem is false.
	Then there exists an unimodular eigenvalue $e^{i\theta} \not= 1$ of $T$ (with some $\theta \in [-\pi,\pi)$), 
	and hence, $e^{i\theta} \in \specPnt(T\restricted{PE})$.
	Since the letter point spectrum is cyclic 
	according to \cite[Corollary~2 to Proposition~V.4.2]{Schaefer1974}, 
	we can even assume that $\re e^{i\theta} \le 0$. 
	Moreover, since the point spectrum of $T\restricted{PE}$ is invariant under complex conjugation, 
	we can also assume that $\im e^{i\theta} \ge 0$. 
	
	Thus, $e^{i\theta} = \cos \theta + i\sin \theta$, where $\cos \theta \le 0$ and $\sin \theta \ge 0$. 
	Let $z = x+iy \in PE \setminus \{0\}$ be an eigenvector of $T\restricted{PE}$ for the eigenvalue $e^{i\theta}$ 
	(where $x,y$ are contained in the real parts of $E$ and $PE$).
	
	Consider the vector $\modulus{z}$ (where the modulus is computed in the Banach lattice $E$, not in $PE$)%
	\footnote{
		We will actually see later in the proof that the modulus operation is the same in both lattices 
		$E$ and $PE$, but at this point we do not know this, yet.
	}%
	. 
	We have $T \modulus{z} \ge \modulus{Tz} = \modulus{z}$. 
	Since $T$ is irreducible and power bounded, it follows from the inequality 
	$T \modulus{z} \ge \modulus{Tz} = \modulus{z}$ for the non-zero vector $\modulus{z}$  
	that there exists a strictly positive function $\varphi \in E'_+$ which is a fixed point of $T'$; 
	this is explained in the proof of \cite[Proposition~3.11(c)]{GerlachGlueck2019}.
	Moreover, the same reference shows that $\modulus{z}$ is actually a fixed point of $T$.
	Thus, the principal ideal $E_{\modulus{z}}$ is $T$-invariant and hence, so is its closure. 
	As $T$ is irreducible, it follows that $\overline{E_{\modulus{z}}} = E$, 
	so $\modulus{z}$ is a quasi-interior point of $E_+$.
	
	Since $P$ is, by the Jacobs--de Leeuw--Glicksberg theory, 
	contained in the weak operator closure of $\{T^n: \; n \ge 0\}$, 
	it readily follows that the strictly positive functional $\varphi$ is also a fixed point of $P'$. 
	This implies that $P$ is \emph{strictly positive} in the sense that 
	$Pf$ is non-zero for every non-zero $f \in E_+$.
	Therefore, $PE$ is even a sublattice of $E$ (this follows from \cite[Proposition~III.11.5]{Schaefer1974}) 
	and thus, the lattice operations in the Banach lattice $PE$ coincide with the lattice operations in $E$.
	
	Similarly as in the proof of Theorem~\ref{thm:lattice-homomorphism} we now observe that
	\begin{align*}
		Tx = \cos \theta \; x - \sin \theta \; y \qquad \text{and} \qquad Ty = \cos \theta \; y + \sin \theta \; x.
	\end{align*}
	By using that $T\restricted{PE}$ is a lattice homomorphism, we conclude in the same way as in the proof of Theorem~\ref{thm:lattice-homomorphism} that
	\begin{align*}
		& T(x^+) \le x^- + y^- \qquad \text{and} \qquad T(x^-) \le x^+ + y^+, \\
		\text{as well as} \qquad
		& T(y^+) \le y^- + x^+ \qquad \text{and} \qquad T(y^-) \le y^+ + x^-.
	\end{align*}
	Now, let $Q \in \calL(E)$ denote the band projection onto $B$. Then we have
	\begin{align*}
		\overline{E_{Qx^+}} \subseteq \overline{E_{TQx^+}} \subseteq \overline{E_{Tx^+}} \subseteq \overline{E_{x^- + y^-}},
	\end{align*}
	where the first inclusion follows from our assumption on $T$. 
	Since $Qx^+$ is dominated by $x^+$ and thus disjoint to $x^-$, 
	we conclude from Lemma~\ref{lem:closed-ideals}\ref{lem:closed-ideals:item:disjoint} 
	that $\overline{E_{Qx^+}} \subseteq \overline{E_{y^-}}$. 
	Moreover, as $Qx^+$ is contained in the band $B$, so is $\overline{E_{Qx^+}}$, 
	and hence we obtain
	\begin{align*}
		\overline{E_{Qx^+}} 
		= 
		Q \overline{E_{Qx^+}} 
		\subseteq 
		Q \overline{E_{y^-}} 
		\subseteq 
		\overline{QE_{y^-}} 
		\subseteq 
		\overline{E_{Qy^-}}.
	\end{align*}
	By the same reasoning we can see that
	\begin{align*}
		\overline{E_{Qx^-}} 
		\subseteq 
		\overline{E_{Qy^+}} 
		\qquad \text{as well as} \qquad 
		\overline{E_{Qy^+}} \subseteq \overline{E_{Qx^+}} 
		\qquad \text{and} \qquad 
		\overline{E_{Qy^-}} \subseteq \overline{E_{Qx^-}}.
	\end{align*}
	Hence, we have $\overline{E_{Qx^+}} \subseteq \overline{E_{Qy^-}} \subseteq \overline{E_{Qx^-}}$. 
	However, $Qx^+$ and $Qx^-$ are disjoint, 
	so it follows from Lemma~\ref{lem:closed-ideals}\ref{lem:closed-ideals:item:disjoint} 
	that $\overline{E_{Qx^+}} = \{0\}$ and thus, $Qx^+ = 0$. 
	Using the same argument again, we can also see that $Qx^- = 0$, 
	as well as $Qy^+ = 0$ and $Qy^- = 0$.
	
	Yet, we have $\modulus{z} \le \modulus{x} + \modulus{y} = x^+ + x^- + y^+ + y^-$, 
	so $0 \le Q\modulus{z} \le 0$, i.e., $Q\modulus{z} = 0$. 
	As we have noted above that $\modulus{z}$ is a quasi-interior point of $E_+$, 
	the equality $Q\modulus{z} = 0$ implies that $Q = 0$. 
	This contradicts the assumption $B \not= \{0\}$.
\end{proof}

As in Subsection~\ref{subsection:primitivity-1:point-spectrum}, 
we will now derive a corollary of the previous theorem 
which does only require $T$ to be power bounded rather than almost weakly periodic.
We recall that a Banach lattice $E$ is said to have \emph{order continuous norm} 
if every decreasing net in $E_+$ with infimum $0$ is norm convergent to $0$. 
Every KB-space (and thus, in particular, every $L^p$-space for $1 \le p < \infty$) has order continuous norm;
the space $c_0$ of sequence that converge to $0$ (endow with the $\infty$-norm) 
is an example of a Banach lattice that has order continuous norm, but is not a KB-space.

In Corollary~\ref{cor:main-everywhere-bl-kb} we required the underlying Banach lattice to be a KB-space.
Due to the irreducibility of $T$, we only need the space have order continuous norm in the following corollary.
In a Banach lattice with order continuous norm, every closed ideal is a projection band.%
\footnote{
	Indeed, every closed ideal in such a space is a band according to \cite[Theorem~II.5.14 on p.\,94]{Schaefer1974}; 
	moreover, a Banach lattice with order continuous norm is order complete \cite[Theorem~II.5.10 on p.\,89]{Schaefer1974}, 
	and in an order complete vector lattice every band is a projection band \cite[Theorem~II.2.10 on p.\,62]{Schaefer1974}.
}
Thus, instead of requiring the subspace $B$ in Theorem~\ref{thm:irred-bl} to be a projection band, 
one can require it to be a closed ideal in the following corollary.

\begin{corollary} 
	\label{cor:irred-bl-oc}
	Let $E$ be a complex Banach lattice with order continuous norm 
	and let $T: E \to E$ be a positive linear operator which is power bounded.
	Assume that $T$ is irreducible and that there exists a closed ideal $\{0\} \not= B \subseteq E$ 
	such that $\overline{E_{Tf}} \supseteq \overline{E_f}$ for each $0 \le f \in B$. 
	Then $\specPnt(T) \cap \bbT \subseteq \{1\}$.
\end{corollary}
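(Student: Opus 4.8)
The plan is to deduce the corollary from Theorem~\ref{thm:irred-bl} in the same spirit that Corollary~\ref{cor:main-everywhere-bl-kb} was deduced from Theorem~\ref{thm:main-everywhere-bl}. There are only two discrepancies between the hypotheses here and those of Theorem~\ref{thm:irred-bl}: here $T$ is merely power bounded, not a priori weakly almost periodic, and $B$ is merely a closed ideal, not a priori a projection band. The second discrepancy evaporates at once: as recorded just before the corollary, in a Banach lattice with order continuous norm every closed ideal is a projection band, so $B$ is exactly the kind of subspace required in Theorem~\ref{thm:irred-bl}.

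To settle the first discrepancy I would proceed as in the proof of Corollary~\ref{cor:main-everywhere-bl-kb}. Let $\lambda \in \bbT$ be an eigenvalue of $T$ with eigenvector $f \neq 0$; it suffices to show $\lambda = 1$. From $\modulus{f} = \modulus{Tf} \le T\modulus{f}$ we see that $\modulus{f}$ is a non-zero super-fixed point of $T$. Since $T$ is irreducible and power bounded, the argument in the proof of \cite[Proposition~3.11(c)]{GerlachGlueck2019} -- the same one already used inside the proof of Theorem~\ref{thm:irred-bl} -- provides a strictly positive fixed functional of $T'$ and shows that $\modulus{f}$ is in fact a fixed point of $T$. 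Consequently $\overline{E_{\modulus{f}}}$ is a non-zero closed $T$-invariant ideal, hence equal to $E$ by irreducibility; that is, $\modulus{f}$ is a quasi-interior point of $E_+$.

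Now I would invoke order continuity of the norm once more: it makes the order interval $[0,\modulus{f}]$ weakly compact, and this interval is $T$-invariant because $T\modulus{f} = \modulus{f}$. Since $\modulus{f}$ is quasi-interior, the span $E_{\modulus{f}}$ of $[0,\modulus{f}]$ is dense in $E$, and on it every orbit stays in the weakly compact set $[0,\modulus{f}]$ and is therefore relatively weakly compact; together with power boundedness, \cite[Corollary~A.5(c)]{Engel2000} then shows that $T$ is weakly almost periodic on all of $E$ -- unlike in Corollary~\ref{cor:main-everywhere-bl-kb}, no passage to a closed sublattice is needed. At this stage every hypothesis of Theorem~\ref{thm:irred-bl} holds, so that theorem yields $\specPnt(T) \cap \bbT \subseteq \{1\}$, and in particular $\lambda = 1$.

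I do not anticipate a genuine obstacle here: the proof is essentially a reassembly of Theorem~\ref{thm:irred-bl}, the structural fact that closed ideals become projection bands under order continuity, and the weak-compactness-plus-density mechanism already employed for Corollary~\ref{cor:main-everywhere-bl-kb}. The single point that deserves a careful check is the upgrade of the super-fixed point $\modulus{f}$ to a genuine fixed point (and the simultaneous production of a strictly positive invariant functional) using only irreducibility and power boundedness rather than any ergodic hypothesis; but this is precisely the content of \cite[Proposition~3.11(c)]{GerlachGlueck2019}, which Theorem~\ref{thm:irred-bl} already rests on.
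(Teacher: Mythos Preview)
Your proposal is correct and follows essentially the same route as the paper: observe that order continuity upgrades the closed ideal $B$ to a projection band, take an eigenvector $f$ for a unimodular eigenvalue, use irreducibility and power boundedness (via \cite[Proposition~3.11(c)]{GerlachGlueck2019}) to upgrade the super-fixed point $\modulus{f}$ to a genuine quasi-interior fixed point, then exploit weak compactness of $[0,\modulus{f}]$ and density of its span to get weak almost periodicity of $T$ on all of $E$, and finally apply Theorem~\ref{thm:irred-bl}. The only cosmetic slip is the phrase ``on it every orbit stays in $[0,\modulus{f}]$'' --- this holds for points of the order interval itself, not of its span, but the span/density argument via \cite[Corollary~A.5(c)]{Engel2000} is of course what you mean and is exactly what the paper does.
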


\begin{proof}
	First note that, as observed before the corollary, $B$ is actually a projection band. 
	
	Now assume for a contradiction that $T$ has an eigenvalue $\lambda \in \bbT \setminus \{1\}$, 
	and let $z \in E$ be a corresponding eigenvector. 
	Then $T\modulus{z} \ge \modulus{Tz} = \modulus{z}$, 
	and thus it follows from the irreducibility and the power boundedness of $T$ 
	that $\modulus{z}$ is actually a fixed point of $T$, i.e., $T \modulus{z} = \modulus{z}$; 
	compare the proof of Theorem~\ref{thm:irred-bl}. 
	As also seen in the proof of Theorem~\ref{thm:irred-bl}, 
	the irreducibility of $T$ thus implies that $\modulus{z}$ is quasi-interior point of $E_+$.
	Since order intervals in Banach lattices with order continuous norm 
	are weakly compact \cite[Theorem~II.5.10 on p.\,89]{Schaefer1974}, 
	it thus follows by the same argument as in the proof of Corollary~\ref{cor:main-everywhere-bl-kb} 
	that $T$ is weakly almost periodic.
	
	Thus, Theorem~\ref{thm:irred-bl} is applicable, 
	and yields that $\specPnt(T) \cap \bbT \subseteq \{1\}$. 
	This is a contradiction.
\end{proof}

The second theorem in the introduction is an immediate consequence of the previous corollary:

\begin{proof}[Proof of Theorem~\ref{thm:main-irred}]
	Since $p \in [1,\infty)$, the Banach lattice $E := L^p$ has order continuous norm.
	Let $B \subseteq L^p$ denote the set of all functions which vanish outside $S$.
	This is a closed ideal in $L^p$, 
	and for each $0 \le f \in B$ the assumption $\supp(Tf) \supseteq \supp(f)$ 
	in Theorem~\ref{thm:main-irred}
	is equivalent to the inclusion $\overline{E_{Tf}} \supseteq \overline{E_{f}}$.
	Thus, the assumptions of Corollary~\ref{cor:irred-bl-oc} are satisfied,
	so the corollary shows that $\specPnt(T) \cap \bbT \subseteq \{1\}$.
\end{proof}

\subsection{Long-term behaviour}

Again, we use the spectral results from the previous subsections to derive consequences 
for the long-term behaviour of the powers of positive operators.

\begin{corollary}
	\label{cor:irred-bl-oc-convergence}
	Let $E$ be a complex Banach lattice with order continuous norm 
	and let $T: E \to E$ be a positive linear operator which is power bounded.
	Assume that $T$ is irreducible and that there exists a closed ideal $\{0\} \not= B \subseteq E$ 
	such that $\overline{E_{Tf}} \supseteq \overline{E_f}$ for each $0 \le f \in B$. 
	
	If $T$ has a non-zero fixed vector 
	and there exists an integer $n_0 \ge 0$ and a non-zero linear AM-compact operator $K: E \to K$ 
	such that $0 \le K \le T^{n_0}$, 
	then $T^k$ converges strongly as $k \to \infty$.
\end{corollary}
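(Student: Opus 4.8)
The plan is to deduce the statement by combining Corollary~\ref{cor:irred-bl-oc} with the convergence machinery of \cite[Section~6]{GlueckHaase2019} that was already used for Theorem~\ref{thm:convergence-partial-kernel}; here the irreducibility of $T$ has to take over the role of the structural hypotheses~(1) and~(2) of that theorem and of the quasi-interior fixed point.

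First I would extract the relevant fixed data. Let $0 \not= h \in E$ with $Th = h$; then $T\modulus{h} \ge \modulus{Th} = \modulus{h}$, so $\modulus{h} \in E_+$ is a non-zero super-fixed point of $T$. Exactly as in the proofs of Theorem~\ref{thm:irred-bl} and Corollary~\ref{cor:irred-bl-oc} (which rest on \cite[Proposition~3.11]{GerlachGlueck2019}), the irreducibility and power boundedness of $T$ then yield that $\modulus{h}$ is in fact a fixed point of $T$ and that there is a strictly positive functional $0 \le \varphi \in E'$ with $T'\varphi = \varphi$. Since $\overline{E_{\modulus{h}}}$ is a non-zero closed $T$-invariant ideal, irreducibility forces $\overline{E_{\modulus{h}}} = E$, so $\modulus{h}$ is a quasi-interior fixed point of $E_+$.

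Next I would verify the assumptions~(1) and~(2) of Theorem~\ref{thm:convergence-partial-kernel}. Assumption~(1): if $0 \le x$ satisfies $Tx \ge x$, then $\langle \varphi, Tx - x\rangle = \langle T'\varphi, x\rangle - \langle \varphi, x\rangle = 0$ with $Tx - x \ge 0$ and $\varphi$ strictly positive, hence $Tx = x$. Assumption~(2): given a non-zero $0 \le g \in \fix T$, the same argument as for $\modulus{h}$ shows that $g$ is quasi-interior, so $E_g$ is dense in $E$; since the given AM-compact operator $K$ is positive and non-zero there is $f_0 \in E_+$ with $Kf_0 \not= 0$, and by continuity of $K$ we find $h_0 \in E_g$ with $Kh_0 \not= 0$; choosing $c \ge 0$ with $\modulus{h_0} \le cg$ gives $0 \not= \modulus{Kh_0} \le K\modulus{h_0} \le cKg$, so $Kg \not= 0$. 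As $0 \le K \le T^{n_0}$ and $K$ is AM-compact, assumption~(2) holds with $n = n_0$.

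Having a quasi-interior fixed point together with~(1) and~(2), I would invoke \cite[Lemma~3.10]{GerlachGlueck2019} to conclude that $(T^n)_{n\in\bbN_0}$ satisfies the standard assumptions of \cite[Section~6]{GlueckHaase2019}. Independently, Corollary~\ref{cor:irred-bl-oc}, all of whose hypotheses are included in those of the present corollary, gives $\specPnt(T) \cap \bbT \subseteq \{1\}$. Then \cite[Theorem~6.1(c)]{GlueckHaase2019} applies and shows that $T^k$ converges strongly as $k \to \infty$. The only step requiring genuine care is assumption~(2): a single dominated non-zero AM-compact operator could a priori annihilate a positive fixed vector, and excluding this is precisely where irreducibility (through quasi-interiority of positive fixed vectors) together with positivity of $K$ enters; the remaining steps are a bookkeeping combination of results already established.
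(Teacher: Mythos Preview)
Your proof is correct and follows essentially the same route as the paper's own argument: both obtain a quasi-interior fixed point from irreducibility, verify the hypotheses needed for \cite[Lemma~3.10]{GerlachGlueck2019} (the paper phrases this via assumptions~(a) and~(b) of \cite[Theorem~3.9]{GerlachGlueck2019}, whereas you recast it as conditions~(1) and~(2) of Theorem~\ref{thm:convergence-partial-kernel}), invoke Corollary~\ref{cor:irred-bl-oc} for the peripheral point spectrum, and conclude via \cite[Theorem~6.1(c)]{GlueckHaase2019}. Your explicit verification that $Kg \not= 0$ for every non-zero positive fixed vector $g$ is a detail the paper leaves implicit in its reference to \cite[Lemma~3.10]{GerlachGlueck2019}, but the overall strategy is the same.
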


\begin{proof}
	According to Corollary~\ref{cor:irred-bl-oc} one has $\specPnt(T) \cap \bbT \subseteq \{1\}$.
	
	Due to the irreducibility of $T$ the assumptions~(a) and~(b) of 
	\cite[Theorem~3.9]{GerlachGlueck2019} are satisfied for the semigroup representation 
	$(T^k)_{k \in \bbN_0}$; 
	indeed, assumption~(a) follows from \cite[Proposition~3.11(c)]{GerlachGlueck2019}, 
	and assumption~(b) from the fact that every non-zero fixed point of $T$ in $E_+$ 
	is a quasi-interior point of $E_+$ 
	(see \cite[Theorem~V.5.2(i) on p.\,329]{Schaefer1974} or our proof of Theorem~\ref{thm:irred-bl}).
	
	If $x \in E$ denotes a non-zero fixed point of $T$, then $T\modulus{x} \ge \modulus{x}$, 
	so $\modulus{x}$ is even a fixed point of $T$ by property~(a) mentioned in the previous paragraph;
	therefore, $\modulus{x}$ is also a quasi-interior point according to property~(b). 
	Thus, the assumptions of \cite[Lemma~3.10]{GerlachGlueck2019} are satisfied
	(note that the divisibility assumption in \cite[Theorem~3.9]{GerlachGlueck2019} 
	is not needed for the lemma) 
	and this shows in turn that the standard assumptions of \cite[Section~6]{GlueckHaase2019}
	are satisfied. 
	As $T$ has no eigenvalues in $\bbT \setminus \{1\}$, 
	it thus follows from \cite[Theorem~6.1(c)]{GlueckHaase2019} 
	that $T^k$ converges strongly as $k \to \infty$.
\end{proof}

We do not discuss the case of integral operators on general measure spaces in detail here 
(which can be treated in the same spirit is 
in Corollaries~\ref{cor:everywhere-convergence-l-p} and~\ref{cor:lindeloef} in the preceding section), 
but we find it worthwhile to again mention the case of $\ell^p$-spaces explicitly:

\begin{corollary} 
	\label{cor:irred-l-p-convergence}
	Let $p \in [1,\infty)$ and set $\ell^p := \ell^p(\bbN)$. 
	Let $T: \ell^p \to \ell^p$ be a positive linear operator which is power bounded. 
	Assume that $T$ is irreducible and 
	has a fixed point $f_0$ such that $f_0(\omega) > 0$ for all $\omega \in \bbN$. 
	Assume moreover that $\langle e_j, Te_j \rangle > 0$ for at least one canonical unit vector $e_j$.

	Then $T^k$ converges strongly as $k \to \infty$.
\end{corollary}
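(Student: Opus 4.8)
The plan is to deduce the corollary directly from Corollary~\ref{cor:irred-bl-oc-convergence}, by checking each of its hypotheses for the concrete Banach lattice $E=\ell^p$. First I would recall that $\ell^p$ has order continuous norm for every $p\in[1,\infty)$, and that, by assumption, $T$ is positive, power bounded, irreducible, and has the non-zero fixed vector $f_0$. It therefore only remains to exhibit (i) a non-zero closed ideal $B\subseteq\ell^p$ on which $T$ enlarges the closed ideal generated by a function, and (ii) an integer $n_0\ge 0$ together with a non-zero AM-compact operator $K$ with $0\le K\le T^{n_0}$.

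For (i) I would take $B:=\linSpan\{e_j\}$, where $e_j$ is a canonical unit vector with $\langle e_j, Te_j\rangle>0$. Since $e_j$ is an atom of $\ell^p$, one has $E_{e_j}=\linSpan\{e_j\}=B$, and this finite-dimensional subspace is a closed ideal (in fact a projection band). Every $0\le f\in B$ has the form $f=c\,e_j$ with $c\ge 0$, so $\overline{E_f}\subseteq\overline{E_{e_j}}$. The hypothesis $\langle e_j, Te_j\rangle>0$ says exactly that the $j$-th coordinate of $Te_j$ is strictly positive, hence $e_j\le\langle e_j, Te_j\rangle^{-1}\,Te_j$ coordinatewise and thus $e_j\in E_{Te_j}$. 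By Lemma~\ref{lem:ideal-inclusion} this yields $\overline{E_{e_j}}\subseteq\overline{E_{Te_j}}$, and therefore $\overline{E_f}\subseteq\overline{E_{e_j}}\subseteq\overline{E_{Te_j}}=\overline{E_{Tf}}$ for every $0\le f\in B$ (using Lemma~\ref{lem:ideal-unter-op}\ref{lem:ideal-unter-op:item:general}, or directly since $Tf=c\,Te_j$).

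For (ii) I would simply use that, as already observed in the proof of Corollary~\ref{cor:convergence-small-l-p}, every order interval in $\ell^p$ is norm compact, so that every bounded linear operator on $\ell^p$ is AM-compact; in particular $K:=\id_E$ is AM-compact, and with $n_0:=0$ one has $0\le\id_E\le T^0=\id_E$ and $\id_E\neq 0$. At this point all the assumptions of Corollary~\ref{cor:irred-bl-oc-convergence} are satisfied, and that corollary gives the strong convergence of $T^k$ as $k\to\infty$.

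I do not expect a genuine obstacle here: the argument is essentially a matter of matching the concrete $\ell^p$-data to the abstract hypotheses of Corollary~\ref{cor:irred-bl-oc-convergence}. The only steps requiring a moment's care are the choice of the ideal $B$ -- one must pick the span of a \emph{single} atom $e_j$ whose diagonal entry $\langle e_j, Te_j\rangle$ is non-zero, rather than the whole space -- and the observation that the AM-compactness condition is automatic on $\ell^p$ because order intervals there are compact. (The same proof applies verbatim with $\ell^p$ replaced by $c_0$, which also has order continuous norm and compact order intervals.)
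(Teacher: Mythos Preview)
Your proposal is correct and follows essentially the same route as the paper: both apply Corollary~\ref{cor:irred-bl-oc-convergence} with the one-dimensional band $B=\linSpan\{e_j\}$ and use that order intervals in $\ell^p$ are compact. The only cosmetic difference is that the paper takes $K=T$ (and therefore first disposes of the trivial case $T=0$, which is in fact excluded by the hypotheses anyway), whereas you take $K=\id_E$ with $n_0=0$; your choice is slightly cleaner.
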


\begin{proof}
	If $T$ is zero there is nothing to prove, so assume that $T \not= 0$. 
	For $p \in [1,\infty)$ every order interval in $\ell^p$ is compact, 
	so $T$ is AM-compact. 
	Hence, we can apply 
	Corollary~\ref{cor:irred-bl-oc-convergence}, 
	where $B := \big\{g \in \ell^p: \; g(\omega) = 0 \text{ for all } \omega \in \bbN \setminus \{j\}\big\}$.
\end{proof}

\section{Domination of the identity} 
\label{section:domination-of-id}

\subsection{The spectrum}

In this subsection we give to sufficient conditions for the peripheral spectrum of a positive operator 
to consist of the spectral radius only. 
The first of those results, Theorem~\ref{thm:peripheral-spectrum-first-power}, 
is likely to be known to experts in Perron--Frobenius theory. 
Yet, we could not find an explicit reference for it in the literature, so we include the proof.
By $\spr(T)$ we denote the spectral radius of a bounded linear operator $T$, 
and the \emph{peripheral spectrum of $T$} is denoted by
\begin{align*}
	\spec_\per(T) := \{\lambda \in \spec(T): \; \modulus{\lambda} = \spr(T)\}.
\end{align*}

\begin{theorem} 
	\label{thm:peripheral-spectrum-first-power}
	Let $E$ be a complex Banach lattice and let $T: E \to E$ be a positive linear operator. 
	If there is a number $\varepsilon > 0$ such that $T \ge \varepsilon \id$, 
	then $\spec_\per(T) = \{\spr(T)\}$.
\end{theorem}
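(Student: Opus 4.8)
The plan is to reduce the statement to a rescaled situation and then exploit the fact that a positive operator dominating a positive multiple of the identity is, after translation, "compatible" with the spectral radius in a strong sense. First I would normalise: without loss of generality assume $\spr(T) = 1$ (replace $T$ by $T/\spr(T)$, which is harmless since $\spr(T) \geq \varepsilon > 0$, so in particular $\spr(T) \neq 0$; note $T \geq \varepsilon \id$ forces $\spr(T) \geq \varepsilon$). After this normalisation we must show $\spec_\per(T) = \{1\}$, i.e.\ that the only spectral value of modulus $1$ is the point $1$ itself. Since $1 = \spr(T) \in \spec(T)$ always holds for a positive operator (by the classical result that the spectral radius of a positive operator on a Banach lattice belongs to the spectrum), the content is the inclusion $\spec_\per(T) \subseteq \{1\}$.

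The key device is the following: write $T = \varepsilon\id + S$ where $S := T - \varepsilon\id \geq 0$. Then $S$ is again a positive operator, and $\spr(S) = \spr(T) - \varepsilon = 1 - \varepsilon \in [0,1)$; here I use the fact that for a positive operator which dominates $\varepsilon\id$, shifting by $-\varepsilon\id$ shifts the whole peripheral spectrum, and more to the point one has the identity $\spec(T) = \varepsilon + \spec(S)$ as sets (this is just the spectral mapping theorem for the affine map $z \mapsto \varepsilon + z$, applied to $S$, with no positivity needed). The crucial point is then that $\spr(S) = 1 - \varepsilon < 1$: this follows because $\spr(S) = \spr(T) - \varepsilon$, which in turn I would justify by noting that $\spr(T) \in \spec(T) = \varepsilon + \spec(S)$ implies $\spr(T) - \varepsilon \in \spec(S)$, so $\spr(S) \geq \spr(T) - \varepsilon$; and conversely, positivity of $S$ gives $\spr(S) = \max\{|\lambda| : \lambda \in \spec(S)\}$ attained at a point of $\spec(S)$, and since $\spec(S) = \spec(T) - \varepsilon$ with $\spec(T) \subseteq \overline{\mathbb{D}}$ (as $\spr(T)=1$), the point of $\spec(S)$ of largest modulus is at worst at $1 - \varepsilon$ on the real axis — here one uses that for a positive operator the spectral radius lies in the spectrum, applied to $S$, combined with the geometric fact that translating the closed unit disc by $-\varepsilon$ to the left moves the rightmost real point to $1-\varepsilon$ and this is the unique point of the translated disc at maximal distance from the origin. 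Hence $\spr(S) = 1 - \varepsilon$.

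Now suppose $\lambda \in \spec(T)$ with $|\lambda| = 1$. Then $\lambda - \varepsilon \in \spec(S)$, so $|\lambda - \varepsilon| \leq \spr(S) = 1 - \varepsilon$. Writing $\lambda = e^{i\theta}$, this says $|e^{i\theta} - \varepsilon|^2 \leq (1-\varepsilon)^2$, i.e.\ $1 - 2\varepsilon\cos\theta + \varepsilon^2 \leq 1 - 2\varepsilon + \varepsilon^2$, which simplifies to $\cos\theta \geq 1$, forcing $\cos\theta = 1$ and $\lambda = 1$. This gives $\spec_\per(T) \subseteq \{1\}$, and combined with $1 = \spr(T) \in \spec(T)$ we conclude $\spec_\per(T) = \{1\}$.

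The main obstacle is the justification that $\spr(S) = 1 - \varepsilon$ (equivalently that $\spr(S) \leq 1 - \varepsilon$); the reverse inequality $\spr(S) \geq 1 - \varepsilon$ is immediate from $\spec(S) = \spec(T) - \varepsilon \ni 1 - \varepsilon$. For the upper bound one cannot merely invoke $\spec(T) \subseteq \overline{\mathbb{D}}$ and translate, since an arbitrary point of $\overline{\mathbb{D}} - \varepsilon$ can have modulus up to $1 + \varepsilon$; the geometry only works because $S$ is \emph{positive}, so that $\spr(S) \in \spec(S) = \spec(T) - \varepsilon$, i.e.\ $\spr(S)$ is realised at a point $\mu - \varepsilon$ with $\mu \in \spec(T)$, $|\mu| \leq 1$, and among such points the one of maximal modulus is $\mu = 1$ (giving modulus $1-\varepsilon$) — one checks that $|\mu - \varepsilon| \leq 1 - \varepsilon$ whenever $|\mu| \leq 1$ and $\varepsilon \in (0,1]$, and if $\varepsilon > 1$ one has $\spr(T) = 1 < \varepsilon \le T$ which combined with $\spr(T) \ge \varepsilon$ is impossible, so $\varepsilon \le 1$ automatically after normalisation. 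Thus the positivity of $S$ (used to place $\spr(S)$ inside $\spec(S)$) is exactly what makes the argument go through, and this is the step that requires care.
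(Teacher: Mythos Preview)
Your overall strategy coincides with the paper's: set $S := T - \varepsilon\,\id \ge 0$, use $\spec(T) = \varepsilon + \spec(S)$, establish $\spr(S) = \spr(T) - \varepsilon$, and conclude that $\spec(T)$ lies in the closed disc of radius $\spr(T)-\varepsilon$ about $\varepsilon$, which touches the circle $\spr(T)\,\bbT$ only at $\spr(T)$. The final computation with $\lambda = e^{i\theta}$ is fine (and is just an explicit version of the paper's last geometric observation).

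There is, however, a genuine error in your justification of the upper bound $\spr(S) \le 1-\varepsilon$ (after normalising $\spr(T)=1$). You assert that ``$|\mu-\varepsilon| \le 1-\varepsilon$ whenever $|\mu|\le 1$ and $\varepsilon\in(0,1]$'' and that $1-\varepsilon$ is ``the unique point of the translated disc at maximal distance from the origin''. Both claims are false: for $\mu=-1$ one has $|\mu-\varepsilon| = 1+\varepsilon$, and the farthest point of $\overline{\bbD}-\varepsilon$ from $0$ is $-1-\varepsilon$, not $1-\varepsilon$. So the inclusion $\spec(S)\subseteq\overline{\bbD}-\varepsilon$ alone does \emph{not} give $\spr(S)\le 1-\varepsilon$.

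The correct use of positivity of $S$ is more direct than the geometric picture you attempt. The statement $\spr(S)\in\spec(S)$ says that the specific nonnegative real number $\spr(S)$ lies in $\spec(S)$; hence $\spr(S)+\varepsilon$ is a positive real element of $\spec(T)$, and therefore $\spr(S)+\varepsilon \le \spr(T)=1$. Equivalently (and this is how the paper phrases it): $\spr(T)-\varepsilon$ is the largest real spectral value of $S$, while positivity of $S$ makes $\spr(S)$ the largest real spectral value of $S$ as well, so the two coincide. With this correction your argument is complete and matches the paper's.
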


\begin{proof}
	For a point $z$ in the complex plane and a number $r \ge 0$ we will use the notation
	$\overline{B}_r(z) \subseteq \bbC$ for the closed disk with center $z$ and radius $r$.
	
	Since $T$ is positive, we have $\spr(T) \in \spec(T)$ 
	\cite[Proposition~V.4.1 on p.\,323]{Schaefer1974}. 
	In particular, $\spr(T)$ is the largest real spectral value of $T$. 
	Since substracting $\varepsilon \id$ from the operator shifts the spectrum by $-\varepsilon$,
	it follows that $\spr(T)-\varepsilon$ is the largest real spectral value of $T-\varepsilon \id$.
	But $T-\varepsilon \id$ is also positive by assumption, 
	so $\spr(T-\varepsilon \id) \in \spec(T-\varepsilon \id)$
	(again by \cite[Proposition~V.4.1 on p.\,323]{Schaefer1974}), and
	hence it follows that $\spr(T-\varepsilon \id)$ is the largest real spectral value of $T-\varepsilon\id$.
	So we conclude that
	\begin{align*}
		\spr(T)-\varepsilon = \spr(T-\varepsilon \id).
	\end{align*}
	Hence, $\spr(T) - \varepsilon \ge 0$ and 
	$\spec(T-\varepsilon\id) \subseteq \overline{B}_{\spr(T)-\varepsilon}(0)$, and therefore,
	\begin{align*}
		\spec(T) 
		\subseteq 
		\overline{B}_{\spr(T)-\varepsilon}(0) + \varepsilon 
		= 
		\overline{B}_{\spr(T)-\varepsilon}(\varepsilon).
	\end{align*}
	Clearly, the latter disk intersects the circle $\spr(T) \bbT$ only in $\spr(T)$.
\end{proof}

\begin{remark}
	Theorem~\ref{thm:peripheral-spectrum-first-power} remains true if $E$ is not a Banach lattice, 
	but only (a complexification of) an order Banach space with generating and normal cone. 
	Indeed, on such a space one still knows that 
	the spectral radius of every positive operator is contained in its spectrum
	\cite[paragraph~2.2 on p.\,311]{SchaeferWolff1999},
	so the proof above works without modification.
\end{remark}

In our second theorem in this subsection we replace the assumption $T \ge \varepsilon \id$ 
with the more general assumption $T^n \ge \varepsilon T^{n-1}$ for an integer $n \ge 1$. 
In order to deduce from this condition that the peripheral spectrum is trivial, 
we need an additional technical assumption, namely that the peripheral spectrum is a priori known to be cyclic. 
We recall from the previous sections that a subset $S$ of the complex numbers if called \emph{cyclic} 
if $re^{i\theta} \in S$ (for some $r \ge 0$ and $\theta \in \bbR$) 
implies that $re^{in\theta} \in S$ for all $n \in \bbZ$
(see the discussion after the theorem for comments on this assumption).

\begin{theorem} 
	\label{thm:peripheral-spectrum-powers}
	Let $E$ be a complex Banach lattice, let $T: E \to E$ be a positive linear operator
	and assume that the peripheral spectrum of $T$ is cyclic. 
	If there is an integer $n \ge 1$ and a number $\varepsilon > 0$ such that $T^n \ge \varepsilon T^{n-1}$, 
	then $\spec_\per(T) = \{\spr(T)\}$.
\end{theorem}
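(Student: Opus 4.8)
The plan is to assume $\spec_\per(T)$ contains a point other than $\spr(T)$ and reach a contradiction by trapping the spectral radius of the positive operator $T^n-\varepsilon T^{n-1}$ between two incompatible bounds.

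I would first normalise. If $\spr(T)=0$ then $\spec(T)=\{0\}$ and there is nothing to show; otherwise I replace $T$ by $T/\spr(T)$, which keeps $T$ positive, keeps $\spec_\per(T)$ cyclic, and turns the hypothesis into $T^n\ge(\varepsilon/\spr(T))\,T^{n-1}$. So assume $\spr(T)=1$; since $T$ is positive, $1=\spr(T)\in\spec(T)$, hence $1\in\spec_\per(T)$. Suppose for contradiction that some $\lambda=e^{i\theta}\in\spec_\per(T)$ has $\lambda\neq1$. By cyclicity the orbit $\{e^{ik\theta}:k\in\bbZ\}$ lies in $\spec_\per(T)$, and so does its closure, because $\spec_\per(T)$ is closed. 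A short elementary argument shows that such a closed cyclic set always meets the closed left half-plane $\{\re z\le0\}$: it is either all of $\bbT$ (when $\theta/(2\pi)$ is irrational) or the set of all $q$-th roots of unity for some $q\ge2$, and in either case it contains a point of non-positive real part. Fix $\zeta\in\spec_\per(T)$ with $|\zeta|=1$ and $\re\zeta\le0$.

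The heart of the argument is a two-sided estimate for $\spr(T^n-\varepsilon T^{n-1})$. On the one hand the hypothesis says exactly $T^n-\varepsilon T^{n-1}\ge0$, and $T^n-(T^n-\varepsilon T^{n-1})=\varepsilon T^{n-1}\ge0$, so $0\le T^n-\varepsilon T^{n-1}\le T^n$; hence, by monotonicity of the spectral radius for positive operators, $\spr(T^n-\varepsilon T^{n-1})\le\spr(T^n)=\spr(T)^n=1$. On the other hand, the polynomial spectral mapping theorem applied to $p(z)=z^{n-1}(z-\varepsilon)$ gives $\spec(T^n-\varepsilon T^{n-1})=\{\,z^{n-1}(z-\varepsilon):z\in\spec(T)\,\}$, and since $\zeta\in\spec(T)$ this set contains $\zeta^{n-1}(\zeta-\varepsilon)$, whence
\[
\spr(T^n-\varepsilon T^{n-1})\;\ge\;|\zeta|^{n-1}\,|\zeta-\varepsilon|\;=\;|\zeta-\varepsilon|\;=\;\bigl(1-2\varepsilon\re\zeta+\varepsilon^2\bigr)^{1/2}\;\ge\;(1+\varepsilon^2)^{1/2}\;>\;1,
\]
using $|\zeta|=1$, $\re\zeta\le0$ and $\varepsilon>0$. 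These two bounds contradict each other, so $\spec_\per(T)=\{1\}$; undoing the normalisation gives $\spec_\per(T)=\{\spr(T)\}$.

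I do not expect a real obstacle. The cyclicity assumption enters only in the mild step of rotating some peripheral spectral value into $\{\re z\le0\}$, where $|\zeta-\varepsilon|$ necessarily exceeds $\spr(T)$; everything else is the interplay between the order-theoretic bound $\spr(T^n-\varepsilon T^{n-1})\le\spr(T^n)$ and the analytic lower bound from spectral mapping at $\zeta$. The only things needing a moment's care are the elementary claim that a non-trivial closed cyclic subset of $\bbT$ meets the closed left half-plane, and the routine rescaling reducing to $\spr(T)=1$.
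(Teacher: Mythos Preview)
Your proof is correct and follows essentially the same route as the paper: normalise to $\spr(T)=1$, use cyclicity to produce a peripheral spectral value $\zeta$ with $\re\zeta\le0$, and derive the contradiction $1\ge\spr(T^n-\varepsilon T^{n-1})\ge|\zeta^{n-1}(\zeta-\varepsilon)|=|\zeta-\varepsilon|>1$ via the order bound $0\le T^n-\varepsilon T^{n-1}\le T^n$ and spectral mapping. The only difference is that you spell out the elementary cyclicity step (via the rational/irrational dichotomy and closedness of the spectrum) where the paper simply asserts the existence of such a $\zeta$.
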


The a priori assumption in the theorem that $\spec_\per(T)$ be cyclic is rather weak, 
since it is automatically satisfied for many classes of positive operators. 
In particular, if $\spr(T) = 1$ and $T$ is power bounded, 
which is a natural assumption if one wants to prove convergence of the iterates $T^n$, 
then positivity of $T$ always implies that $\spec_\per(T)$ is cyclic 
(this follows from \cite[Theorem~V.4.9 on p.\,327]{Schaefer1974}). 
It is, however, an open problem 
whether the peripheral spectrum of every positive operator on a Banach lattice is cyclic; 
see \cite{Glueck2018} and the references therein for a thorough discussion of this question.

\begin{proof}[Proof of Theorem~\ref{thm:peripheral-spectrum-powers}]
	There is no loss of generality in assuming that $\spr(T) = 1$.
	We have $0 \le T^n - \varepsilon T^{n-1} \le T^n$, 
	so $\spr(T^n - \varepsilon T^{n-1}) \le \spr(T^n) = \spr(T)^n = 1$. 
	
	Since $T$ is positive, one has $1 = \spr(T) \in \spec(T)$ 
	\cite[Proposition~V.4.1 on p.\,323]{Schaefer1974}. 
	Now assume for a contradiction that $\spec_\per(T)$ does not only consist of $\spr(T)$. 
	Since $\spec_\per(T)$ is cyclic, 
	this implies that there exists a number $\lambda \in \spec_\per(T)$ with real part $\re \lambda \le 0$. 
	For the number $\lambda^n - \varepsilon \lambda^{n-1}$, 
	which is contained in the spectrum of $T^n - \varepsilon T^{n-1}$, we then obtain
	\begin{align*}
		1 \ge 
		\spr(T^n - \varepsilon T^{n-1}) 
		\ge 
		\modulus{\lambda^n - \varepsilon\lambda^{n-1}} 
		= 
		\modulus{\lambda - \varepsilon} 
		>  
		\modulus{\lambda}
		=
		1,
	\end{align*}
	where the inequality at the end follows from $\re \lambda \le 0$.
	This is a contradiction.
\end{proof}

\subsection{Long-term behaviour}

Let us give an application of Theorem~\ref{thm:peripheral-spectrum-powers}, 
again to the asymptotics of the powers of $T$.
Recall that a bounded linear operator $T$ on a Banach space is called 
\emph{mean ergodic} if the sequence of its \emph{Cesàro means}
\begin{align*}
	\frac{1}{n} \sum_{k=0}^{n-1} T^k
\end{align*}
converges strongly as $n \to \infty$.
In this case, the limit of this sequence is called the \emph{mean ergodic projection} of $T$.

\begin{corollary}  
	\label{cor:peripheral-spectrum-powers-plus-ablp}
	Let $E$ be a complex Banach lattice and let $T: E \to E$ be a positive linear operator 
	which is power bounded and mean ergodic. 
	If there is an integer $n \ge 1$ and a number $\varepsilon > 0$ such that $T^n \ge \varepsilon T^{n-1}$, 
	then $T^k$ converges strongly as $k \to \infty$.
\end{corollary}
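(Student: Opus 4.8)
The plan is to combine Theorem~\ref{thm:peripheral-spectrum-powers} with a standard Tauberian-type convergence criterion for power-bounded operators. First I would apply Theorem~\ref{thm:peripheral-spectrum-powers}: the hypotheses of the corollary include that $T$ is positive and power bounded with $T^n \ge \varepsilon T^{n-1}$ for some $n \ge 1$ and $\varepsilon > 0$. Since $T$ is power bounded, its spectral radius satisfies $\spr(T) \le 1$; and because $T$ is positive and mean ergodic with a nontrivial limit (if $T$ were quasi-nilpotent the whole statement is either trivial or handled separately), one has $\spr(T) = 1$ unless $E = \{0\}$ — in any case, if $\spr(T) < 1$ then $T^k \to 0$ and we are done, so we may assume $\spr(T) = 1$. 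Moreover, for a power-bounded positive operator with $\spr(T) = 1$, the peripheral spectrum $\spec_\per(T)$ is cyclic by \cite[Theorem~V.4.9 on p.\,327]{Schaefer1974} (as remarked after Theorem~\ref{thm:peripheral-spectrum-powers}). Hence Theorem~\ref{thm:peripheral-spectrum-powers} applies and gives $\spec_\per(T) = \{1\}$; in particular $1$ is the only spectral value of $T$ on the unit circle $\bbT$.

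Next I would invoke a convergence theorem for the powers of a power-bounded operator. The relevant fact is: if $T$ is power bounded, mean ergodic, and $\spec(T) \cap \bbT \subseteq \{1\}$, then $T^k$ converges strongly as $k \to \infty$. This is essentially the Katznelson--Tzafriri theorem together with mean ergodicity handling the eigenvalue $1$: mean ergodicity gives $E = \ker(1-T) \oplus \overline{\operatorname{ran}(1-T)}$ with the mean ergodic projection $P$ onto $\ker(1-T)$ along $\overline{\operatorname{ran}(1-T)}$; on $\ker(1-T)$ the powers $T^k$ act as the identity, and on the invariant subspace $F := \overline{\operatorname{ran}(1-T)}$ the restriction $T|_F$ is power bounded with $1 \notin \spec(T|_F)$ and $\spec(T|_F) \cap \bbT = \emptyset$ (since removing the eigenvalue $1$ and using $\spec_\per(T) = \{1\}$ leaves nothing on the circle), so the Katznelson--Tzafriri theorem yields $\norm{T^k(1-T)} \to 0$, which on $F$ forces $T^k|_F \to 0$ strongly. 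Combining, $T^k = T^k P + T^k(\id - P) \to P$ strongly.

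The main obstacle will be making the second step precise and citing it correctly, since the paper has not set up the Katznelson--Tzafriri machinery explicitly; one option is to cite a ready-made result of the form ``power bounded $+$ mean ergodic $+$ peripheral spectrum trivial $\Rightarrow$ strong convergence of powers'' (for instance the Arendt--Batty--Lyubich--Phóng circle of results, or a textbook reference such as \cite{Krengel1985}), and to check that the hypotheses match. A secondary subtlety is the degenerate case $\spr(T) < 1$, which should be dispatched in one line at the start. I would therefore structure the proof as: (i) reduce to $\spr(T) = 1$; (ii) observe cyclicity of $\spec_\per(T)$ and apply Theorem~\ref{thm:peripheral-spectrum-powers} to get $\spec(T) \cap \bbT \subseteq \{1\}$; (iii) quote the relevant convergence theorem for power-bounded mean ergodic operators whose spectrum meets $\bbT$ at most in $1$, and conclude.

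\begin{proof}[Proof of Corollary~\ref{cor:peripheral-spectrum-powers-plus-ablp}]
	Since $T$ is power bounded we have $\spr(T) \le 1$. If $\spr(T) < 1$ then $T^k \to 0$ in operator norm and we are done, so assume $\spr(T) = 1$. Because $T$ is positive and power bounded with $\spr(T) = 1$, the peripheral spectrum $\spec_\per(T)$ is cyclic \cite[Theorem~V.4.9 on p.\,327]{Schaefer1974}. Thus Theorem~\ref{thm:peripheral-spectrum-powers} applies and gives $\spec_\per(T) = \{1\}$; since $\spr(T) = 1$ this means $\spec(T) \cap \bbT \subseteq \{1\}$.

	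As $T$ is mean ergodic, the space decomposes as $E = \ker(1-T) \oplus F$ with $F := \overline{(1-T)E}$, and both summands are $T$-invariant; denote by $P$ the mean ergodic projection onto $\ker(1-T)$. On $\ker(1-T)$ we have $T^k = \id$ for all $k$. On $F$ the restriction $S := T\restricted{F}$ is power bounded and satisfies $\spec(S) \cap \bbT = \emptyset$: indeed $\spec(S) \subseteq \spec(T)$, so $\spec(S) \cap \bbT \subseteq \{1\}$, while $1$ is not an approximate eigenvalue of $S$ because $\ker(1-S) \subseteq \ker(1-T) \cap F = \{0\}$ and, for a power-bounded operator, a boundary spectral point that is not an eigenvalue of the operator nor of its adjoint cannot belong to the spectrum of the restriction to $\overline{(1-T)E}$ (this is the standard mean ergodic splitting, see e.g.\ \cite[Section~2.2]{Krengel1985}). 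By the Katznelson--Tzafriri theorem, $\norm{S^k(1-S)} \to 0$ as $k \to \infty$; since $F = \overline{(1-S)F}$ and $(S^k)$ is bounded, this yields $S^k \to 0$ strongly on $F$.

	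Consequently, for every $f \in E$, writing $f = Pf + (\id - P)f$ with $(\id - P)f \in F$, we get
	\begin{align*}
		T^k f = T^k P f + S^k (\id - P) f = P f + S^k (\id - P) f \xrightarrow{k \to \infty} Pf.
	\end{align*}
	Hence $T^k$ converges strongly to $P$ as $k \to \infty$.
\end{proof}
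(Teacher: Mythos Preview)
Your overall strategy matches the paper's: reduce to $\spr(T)=1$, use power boundedness and positivity to get cyclicity of $\spec_\per(T)$ via \cite[Theorem~V.4.9]{Schaefer1974}, apply Theorem~\ref{thm:peripheral-spectrum-powers} to obtain $\spec(T)\cap\bbT\subseteq\{1\}$, then split $E=\ker(1-T)\oplus F$ by mean ergodicity and invoke a Tauberian theorem on $F$. The paper cites the single-operator Arendt--Batty theorem \cite[Theorem~5.1]{ArendtBatty1988} for the last step, whereas you use Katznelson--Tzafriri; these are essentially interchangeable here.

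There is, however, one genuine slip. Your claim that $\spec(S)\cap\bbT=\emptyset$ is not correctly justified, and in fact it can fail: take $E=\ell^2$ and $T e_n=(1-\tfrac1n)e_n$; then $T$ is positive, power bounded, mean ergodic with $P=0$, $\spec(T)\cap\bbT=\{1\}$, yet $F=E$, $S=T$, and $1\in\spec(S)$. The sentence ``a boundary spectral point that is not an eigenvalue of the operator nor of its adjoint cannot belong to the spectrum'' is simply false (this is precisely what continuous spectrum is). Fortunately you do not need this claim at all: Katznelson--Tzafriri only requires $\spec(S)\cap\bbT\subseteq\{1\}$, which you have already correctly derived from $\spec(S)\subseteq\spec(T)$. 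Even cleaner: apply Katznelson--Tzafriri directly to $T$ to get $\norm{T^k(1-T)}\to 0$, hence $T^k\to 0$ strongly on $(1-T)E$ and, by power boundedness, on $F=\overline{(1-T)E}$. So delete the assertion $\spec(S)\cap\bbT=\emptyset$ and its flawed justification; the rest of your argument then goes through and coincides with the paper's proof.
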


\begin{proof}
	The sequence of Cesàro means of $T$ is bounded in operator norm; 
	this follows from the mean ergodicity and the uniform boundedness principle. 
	Hence, the spectral radius of $T$ satisfies $\spr(T) \le 1$. 
	We may assume that $\spr(T) = 1$ since otherwise the assertion is trivial.
	
	Due to the boundedness of the Cesàro means, the resolvent of $T$ satisfies the estimate 
	\begin{align*}
		\sup_{\lambda \in (1,\infty)} (\lambda - 1) \norm{(\lambda - T)^{-1} } < \infty,
	\end{align*}
	see \cite[Theorem~1.7]{Emilion1985}. 
	This together with the positivity of $T$ implies that the peripheral spectrum of $T$ is cyclic
	\cite[Theorem~V.4.9 on p.\,327]{Schaefer1974}.
	
	So Theorem~\ref{thm:peripheral-spectrum-powers} is applicable 
	and yields that $\spec(T) \cap \bbT \subseteq \{1\}$.  
	Due to the mean ergodicity of $T$ we can apply the single operator version 
	of the so-called \emph{ABLV theorem} \cite[Theorem~5.1]{ArendtBatty1988}
	to obtain the assertion.%
	\footnote{More precisely speaking, we need to split $E$ into the range and the kernel of the mean ergodic projection $P$
	and apply the cited theorem to the restriction of $T$ to $\ker P$.}
\end{proof}

Let us close this section with an example which can loosely be interpreted 
as an infinite version of Example~\ref{exa:nagler}.
The interesting point about the example is that Corollary~\ref{cor:peripheral-spectrum-powers-plus-ablp} 
can be applied for the case $n = 2$, but not for $n=1$.

\begin{example}
	Endow the real line $\bbR$ with the Lebesgue measure and let $E := L^2(\bbR)$. 
	Let $(I_n)_{n \in \bbN}$ and $(J_n)_{n \in \bbN}$ be two partitions of $\bbR$ into measurable sets of measure $1$. 
	For each $f \in E$, the series
	\begin{align*}
		Tf := \sum_{n=1}^\infty \int_{I_n} f \dx x \cdot \one_{J_n}
	\end{align*}
	converges unconditionally in $E$, 
	and $T$ is a bounded linear operator on $E$ which has norm $1$ and is positive. 
	We now show the following two properties of $T$:
	\begin{enumerate}[label=(\alph*)]
		\item 
		The powers $T^k$ are not strongly convergent as $k \to \infty$, in general.
		
		\item 
		Assume now that there exists a number $\varepsilon > 0$ 
		such that $I_n \cap J_n$ has measure $\ge \varepsilon$ for each index $n \ge 1$. 
		Then $T^k$ converges strongly as $k \to \infty$.
	\end{enumerate}
	
	\begin{proof}
		(a) 
		If we choose $J_n = I_{n+1}$ for all $n \in \bbN$, 
		then we obtain $T^k \one_{I_1} = \one_{I_{k+1}}$ for all $k \ge 1$. 
		Since the sets $I_n$ are pairwise disjoint and have measure $1$, 
	 	it follows that the latter sequence is not convergent in $E$ as $k \to \infty$.
		
		(b) 
		Let $0 \le f \in E$. 
		Then we obtain
		\begin{align*}
			T^2f 
			= 
			\sum_{n=1}^\infty \int_{I_n} f \dx x \cdot T \one_{J_n} 
			\ge 
			\sum_{n=1}^\infty \int_{I_n} f \dx x \cdot \varepsilon \one_{J_n} 
			= 
			\varepsilon Tf,
		\end{align*}
		so $T^2 \ge \varepsilon T$. 
		Since $E$ is reflexive and $T$ is power bounded, 
		we know that $T$ is mean ergodic. 
		Therefore, Corollary~\ref{cor:peripheral-spectrum-powers-plus-ablp} 
		implies that $T^k$ is strongly convergent as $k \to \infty$.
	\end{proof}
\end{example}

\bibliographystyle{plain}
\bibliography{literature}

\end{document}